\def\MODE{2}
\def\MODE{1}
\newcommand{\COND}[2]{#1}
\newcommand{\COND}[2]{#2}
\newcommand*\samethanks[1][\value{footnote}]{\footnotemark[#1]}
\newcommand{\SUPsubsection}{\COND{\subsubsection}{\subsection}}
    \numberwithin{equation}{section}
\newtheorem{theorem}{Theorem}[section]
\newtheorem{lemma}[theorem]{Lemma}
\newtheorem{definition}[theorem]{Definition}
\newcommand{\vct}[1]{\bm{#1}}
\newcommand{\mtx}[1]{\bm{#1}}
\DeclareMathOperator*{\rank}{rank}
\newcommand{\Tr}{\mathrm{Tr}}
\newcommand{\beq}{\begin{equation}}
\newcommand{\eeq}{\end{equation}}
\newcommand{\beqs}{\begin{equation*}}
\newcommand{\eeqs}{\end{equation*}}
\newcommand{\dist}{\mathrm{dist}}
\newcommand{\A}{\mathcal{A}}
\newcommand{\R}{\mathbb{R}}
\newcommand{\ip}[2]{\ensuremath{\langle #1, #2 \rangle}}
\newcommand{\Normal}{\mathcal{N}}
\newcommand{\norm}[1]{\lVert #1 \rVert}
\newcommand{\opnorm}[1]{\left\|#1\right\|}
\newcommand{\fronorm}[1]{\left\|#1\right\|_{F}}
\newcommand{\twonorm}[1]{\left\|#1\right\|_{\ell_2}}
\newcommand{\abs}[1]{\ensuremath{| #1 |}}
\newcommand{\BigO}{\mathcal{O}}
\newcommand{\BigOmega}{\Omega}
\newcommand{\RC}{\mathsf{RC}}
\newcommand{\T}{\mathsf{T}}
\newcommand{\UMinusXR}{\mtx{U}-\mtx{XR}}
\newcommand{\WMinusZR}{\mtx{W}-\mtx{ZR}}
\newcommand{\UUTMinusXXT}{\mtx{UU}^\T-\mtx{XX}^\T}
\newcommand{\WWTMinusZZT}{\mtx{WW}^\T-\mtx{ZZ}^\T}
\newcommand{\UVTMinusM}{\mtx{UV}^\T-\mtx{M}}
\newcommand{\UUT}{\mtx{UU}^\T}
\newcommand{\UVT}{\mtx{UV}^\T}
\newcommand{\XXT}{\mtx{XX}^\T}
\newcommand{\XYT}{\mtx{XY}^\T}
\newcommand{\tinit}{T_{0}}
\newcommand{\Sym}{\mathrm{Sym}}
\newcommand{\blockvec}[2]{\begin{bmatrix} #1 \\ #2 \end{bmatrix}}
\newcommand{\blockmat}[4]{\begin{bmatrix} #1 & #2 \\ #3 & #4 \end{bmatrix}}
\newcommand{\blockmatoff}[2]{\blockmat{\mtx{0}}{#1}{#2}{\mtx{0}}}
\newcommand{\blockmatdiag}[2]{\blockmat{#1}{\mtx{0}}{\mtx{0}}{#2}}
\newcommand{\Projdiag}{\mathcal{P}_{\mathrm{diag}}}
\newcommand{\Projoff}{\mathcal{P}_{\mathrm{off}}}
\newcommand{\B}{\mathcal{B}}
    \icmltitlerunning{Low-rank Solutions of Linear Matrix Equations via Procrustes Flow}
    \title{Low-rank Solutions of Linear Matrix Equations\\ via Procrustes Flow}
    \author{
      Stephen Tu~\thanks{Department of Electrical Engineering and Computer Science, UC Berkeley, Berkeley CA.} \quad
      Ross Boczar~\samethanks[1] \quad
      Max Simchowitz~\samethanks[1] \\
      Mahdi Soltanolkotabi~\thanks{Ming Hsieh Department of Electrical Engineering, University of Southern California, Los Angeles, CA.} \quad
      Benjamin Recht~\samethanks[1]~\thanks{Department of Statistics, UC Berkeley, Berkeley CA.}
    }
    \date{\today}
\begin{document}

\if\MODE2
    \twocolumn[
    \icmltitle{Low-rank Solutions of Linear Matrix Equations\\ via Procrustes Flow}
    % It is OKAY to include author information, even for blind
    % submissions: the style file will automatically remove it for you
    % unless you've provided the [accepted] option to the icml2015
    % package.
    \icmlauthor{Stephen Tu,~Ross Boczar,~Max Simchowitz}{\{stephent,boczar,msimchow\}@berkeley.edu}
    \icmladdress{Department of Electrical Engineering and Computer Science, UC Berkeley, Berkeley CA}
    \icmlauthor{Mahdi Soltanokotabi}{soltanol@usc.edu}
    \icmladdress{Ming Hsieh Department of Electrical Engineering, University of Southern California, Los Angeles, CA}
    \icmlauthor{Benjamin Recht}{brecht@berkeley.edu}
    \icmladdress{Department of Statistics, UC Berkeley, Berkeley CA}

    % You may provide any keywords that you
    % find helpful for describing your paper; these are used to populate
    % the "keywords" metadata in the PDF but will not be shown in the document
    \icmlkeywords{KEYWORDS TODO}
    \vskip 0.3in
    ]
\else
    \maketitle
    \thispagestyle{empty}
\fi

\begin{abstract}
In this paper we study the problem of recovering a low-rank 
matrix from linear measurements. Our algorithm, which we call
\emph{Procrustes Flow}, starts from an initial estimate obtained by a
thresholding scheme followed by gradient descent on a non-convex objective. We
show that as long as the measurements obey a standard restricted isometry
property, our algorithm converges to the unknown matrix at a geometric rate.
In the case of Gaussian measurements, such convergence occurs for a $n_1 \times n_2$
matrix of rank $r$ when the number of measurements exceeds a constant times $(n_1+n_2)r$.
\end{abstract}

\section{Introduction}
Low rank models are ubiquitous in machine learning, and over a decade of
research has been dedicated to determining when such models can be efficiently
recovered from partial information~\cite{fazel02,Rennie05,candes2009exact}.
See~\cite{davenport16} for an extended survey on this topic.
The simplest such recovery problem concerns how can we can find a low-rank
matrix obeying a set of linear equations? What is the computational complexity
of such an algorithm? More specifically, we are interested in solving problems
of the form
\begin{align}
\label{mainopt}
\underset{\quad\mtx{M} \in \R^{n_1 \times n_2}}{\min}\text{ }\text{rank}(\mtx{M})\quad\text{s.t.}\quad\A(\mtx{M}) = \vct{b}\:,
  \end{align}
where $\A :\R^{n_1\times n_2}\longrightarrow \R^m$ is a known affine transformation
that maps matrices to vectors. More specifically, the $k$-th entry of
$\A(\mtx{X})$ is $\ip{\mtx{A}_k}{\mtx{X}} :=
\Tr(\mtx{A}_k^\T \mtx{X})$, where each $\mtx{A}_k\in\R^{n_1\times n_2}$.

Since the early seventies, a popular heuristic for solving such problems
has been to replace $\mtx{M}$ with a low-rank factorization
$\mtx{M}=\mtx{U}\mtx{V}^\T$ and solve matrix bilinear equations of the form
\begin{align}
  \underset{\mtx{U} \in \R^{n_1 \times r}, \mtx{V} \in \R^{n_2 \times r} }{\text{find}}\quad\text{s.t.}\quad\A(\mtx{U}\mtx{V}^\T) = \vct{b}, \label{eq:psdequiv}
\end{align}
via a local search heuristic~\cite{Ruhe74}.  Many researchers have demonstrated
that such heuristics work well in practice for a variety of
problems~\cite{Rennie05,SimonFunk,lee10b,recht13}.  However, these procedures
lack strong guarantees associated with convex programming heuristics for
solving~\eqref{mainopt}.

In this paper we show that a local search heuristic solves
\eqref{eq:psdequiv} under standard restricted isometry assumptions on the
linear map $\A$.  For standard ensembles of equality constraints, we
demonstrate that $\mtx{M}$ can be estimated by such
heuristics as long as we have $\BigOmega((n_1+n_2)r)$ equations.\footnote{Here and throughout we use $f(x) = \BigOmega(g(x))$ if there is a
positive constant $C$ such that $f(x) \geq C g(x)$ for all $x$ sufficiently
large.}
This is merely a constant factor more than the number of parameters needed to
specify a $n_1\times n_2$ rank $r$ matrix.  Specialized to a random Gaussian model and positive semidefinite matrices,
our work improves upon recent independent work by Zheng and Lafferty
\cite{zheng15}.

\section{Algorithms}

In this paper we study a local search heuristic for solving matrix bilinear
equations of the form \eqref{eq:psdequiv} which consists of two components: (1)
a careful initialization obtained by a projected gradient scheme on $n_1\times
n_2$ matrices, and (2) a series of successive refinements of this initial
solution via a gradient descent scheme. This algorithm is a natural extension
of the Wirtinger Flow algorithm developed in \cite{candes14} for solving vector
quadratic equations.
Following \cite{candes14}, we shall refer to the combination of these two steps
as the Procrustes Flow (PF) algorithm.  We shall describe two variants of our
algorithm based on whether the sought after solution $\mtx{M}$ is
positive semidefinite or not. The former is detailed in
Algorithm~\ref{alg:pf}, and the latter in Algorithm~\ref{alg:rpf}.

The initialization phase of both variants is rather similar and is described in
Section \ref{initsec}. The successive refinement phase is explained in Section
\ref{refinePSD} for positive semidefinite (PSD) matrices and in Section \ref{refinenonPSD} for
arbitrary matrices. Throughout this paper when describing the PSD case, we
assume the size of the matrix is $\mtx{M}$ is $n\times n$, i.e.~$n_1=n_2=n$.

\subsection{Initialization via low-rank projected gradients}\label{initsec}
In the initial phase of our algorithm we start from $\widetilde{\mtx{M}}_0=\mtx{0}_{n_1
\times n_2}$ and apply successive updates of the form
\begin{align}
\label{update}
\widetilde{\mtx{M}}_{\tau+1}=\mathcal{P}_r\left(\widetilde{\mtx{M}}_\tau-\alpha_{\tau+1}\sum_{k=1}^m\left(\langle \mtx{A}_k,\widetilde{\mtx{M}}_\tau\rangle-b_k\right)\mtx{A}_k\right),
\end{align}
on rank $r$ matrices of size $n_1\times n_2$.  Here, $\mathcal{P}_r$ denotes
projection onto either rank-$r$ matrices or rank-$r$ PSD matrices, both of
which can be computed efficiently via Lanczos methods. We run \eqref{update}
for $\tinit$ iterations and use the resulting matrix
$\mtx{M}_{\tinit}$ for initialization purposes.  In the PSD
case, we set our initialization to an $n\times r$ matrix $\mtx{U}_0$ obeying
$\widetilde{\mtx{M}}_{\tinit}=\mtx{U}_0\mtx{U}_0^\T$. In the more general case
of rectangular matrices we need to use two factors. Let
$\widetilde{\mtx{M}}_{\tinit} = \mtx{C}_{\tinit} \mtx{\Sigma}_{\tinit}
\mtx{D}_{\tinit}^\T$ be the Singular Value Decomposition (SVD) of
$\widetilde{\mtx{M}}_{\tinit}$. We initialize our algorithm in the rectangular
case by setting $\mtx{U}_0 = \mtx{C}_{\tinit} \mtx{\Sigma}_{\tinit}^{1/2}$ and
$\mtx{V}_0 = \mtx{D}_{\tinit} \mtx{\Sigma}_{\tinit}^{1/2}$.

Updates of the form \eqref{update} have a long history in compressed
sensing/matrix sensing literature (see e.g. \cite{OMP2,
garg2009gradient,CoSamp, needell2009uniform,blumensath2009, meka09,
cai2010singular}). Furthermore, using the first step of the update
\eqref{update} for the purposes of initialization has also been proposed in
previous work (see e.g. \cite{achlioptas2007fast, keshavan2010matrix, jain12}).

\subsection{Successive refinement via gradient descent -- positive semidefinite case}
\label{refinePSD}
We first focus on the PSD case.
As mentioned earlier, we are interested in finding a matrix
$\mtx{U}\in\R^{n\times r}$ obeying matrix quadratic equations of the form
$\A(\UUT)= \vct{b}$.
\COND{
We wish to refine our initial estimate by solving the
non-convex optimization problem
\begin{align}
\label{eq:theproblem}
\min_{\mtx{U} \in \R^{n \times r}} f(\mtx{U}) := \frac{1}{4} \twonorm{ \A(\UUT) - \vct{b} }^2 = \frac{1}{4} \sum_{k=1}^{m} ( \ip{\mtx{A}_k}{\UUT} - b_k)^2,
\end{align}
}{
We wish to refine our initial estimate by minimizing the
non-convex function
\begin{align}
\label{eq:theproblem}
 f(\mtx{U}) := \frac{1}{4} \twonorm{ \A(\UUT) - \vct{b} }^2,
\end{align}
over $\mtx{U} \in \R^{n \times r}$,
}
which minimizes the misfit in our quadratic equations via the square loss. To
solve \eqref{eq:theproblem}, starting from our initial estimate
$\mtx{U}_0\in\R^{n\times r}$ we apply the successive updates
\begin{align}
\label{graddescentupdate}
\mtx{U}_{\tau+1} &:= \mtx{U}_\tau-\frac{\mu_{\tau+1}}{\opnorm{\mtx{U}_0}^2}\nabla f(\mtx{U}_\tau) \COND{}{\notag \\&}= \mtx{U}_\tau-\frac{\mu_{\tau+1}}{\opnorm{\mtx{U}_0}^2}\left(\sum_{k=1}^{m} (\ip{\mtx{A}_k}{\mtx{U}_\tau \mtx{U}_\tau^\T} - b_k) \mtx{A}_k \mtx{U}_\tau\right).
\end{align}
Here and throughout, for a matrix $\mtx{X}$, $\sigma_\ell(\mtx{X})$ denotes the
$\ell$-th largest singular value of $\mtx{X}$, and $\opnorm{\mtx{X}} =
\sigma_1(\mtx{X})$ is the operator norm.  We note that the update
\eqref{graddescentupdate} is essentially gradient descent with a carefully
chosen step size.

\begin{algorithm}
\caption{Procrustes Flow (PF)}
\label{alg:pf}
\begin{algorithmic}
    \REQUIRE \mbox{$\{\mtx{A}_k\}_{k=1}^{m}, \{b_k\}_{k=1}^{m}, \{\alpha_\tau\}_{\tau=1}^{\infty}, \{\mu_\tau\}_{\tau=1}^\infty, \tinit \in\mathbb{N}$}.
\STATE \texttt{// Initialization phase.}
\STATE $\widetilde{\mtx{M}}_0 := \mtx{0}_{n \times n}$.
\FOR{$\tau=0, 1, ..., \tinit - 1$}
\STATE \texttt{// Projection onto rank $r$ PSD matrices.}
\STATE \mbox{$\widetilde{\mtx{M}}_{\tau+1} \gets \mathcal{P}_{r}(\widetilde{\mtx{M}}_\tau - \alpha_{\tau+1} \sum_{k=1}^{m} (\ip{\mtx{A}_k}{\widetilde{\mtx{M}}_\tau} - b_k) \mtx{A}_k)$}.
\ENDFOR
\STATE \texttt{// SVD of $\widetilde{\mtx{M}}_{\tinit}$, with $\mtx{Q} \in \R^{n \times r}, \mtx{\Sigma} \in \R^{r \times r}$.}
\STATE $\mtx{Q} \mtx{\Sigma} \mtx{Q}^\T := \widetilde{\mtx{M}}_{\tinit}$.
\STATE $\mtx{U}_0 := \mtx{Q} \mtx{\Sigma}^{1/2}$.
\STATE \texttt{// Gradient descent phase.}
\REPEAT
\STATE
    \COND{\mbox{$\mtx{U}_{\tau+1} \gets \mtx{U}_\tau - \frac{\mu_{\tau+1}}{\opnorm{\mtx{U}_0}^2}\left(\sum_{k=1}^{m} (\ip{\mtx{A}_k}{\mtx{U}_\tau \mtx{U}_\tau^\T} - b_k) \mtx{A}_k \mtx{U}_\tau\right)$}.}
    {\mbox{$\mtx{U}_{\tau+1} \gets \mtx{U}_\tau-\frac{\mu_{\tau+1}}{\opnorm{\mtx{U}_0}^2}\nabla f(\mtx{U}_\tau)$}.}
\UNTIL{convergence}
\end{algorithmic}
\end{algorithm}

\subsection{Successive refinement via gradient descent -- general case}
\label{refinenonPSD}
We now consider the general case.  Here, we are interested in finding matrices
$\mtx{U}\in\R^{n_1\times r}$ and $\mtx{V}\in\R^{n_2\times r}$ obeying matrix quadratic equations of the form
$\vct{b}=\A(\mtx{U}\mtx{V}^\T)$.
\COND{
In this case, we refine our initial estimate by solving the
non-convex optimization problem
\begin{align}
\label{eq:therectproblem}
\min_{\mtx{U} \in \R^{n_1 \times r},\: \mtx{V} \in \R^{n_2 \times r}}  g(\mtx{U},\mtx{V})
:= \frac{1}{2} \twonorm{ \A(\mtx{U}\mtx{V}^\T) - \vct{b} }^2  + \frac{1}{16}\fronorm{\mtx{U}^\T\mtx{U} - \mtx{V}^\T\mtx{V}}^2\:.
\end{align}
}{
In this case, we refine our initial estimate by minimizing the
non-convex function
\begin{align}
\label{eq:therectproblem}
g(\mtx{U},\mtx{V}) := \frac{1}{2} \twonorm{ \A(\mtx{U}\mtx{V}^\T) - \vct{b} }^2  + \frac{1}{16}\fronorm{\mtx{U}^\T\mtx{U} - \mtx{V}^\T\mtx{V}}^2\:.
\end{align}
over $\mtx{U} \in \R^{n_1 \times r}$ and $\mtx{V} \in \R^{n_2 \times r}$.
}
Note that this is similar to \eqref{eq:theproblem} but adds a regularizer to
measure mismatch between $\mtx{U}$ and $\mtx{V}$. Given a factorization
$\mtx{M}=\mtx{U}\mtx{V}^\T$, for any invertible $r\times r$ matrix
$\mtx{P}$, $\mtx{U}\mtx{P}$ and $\mtx{V}\mtx{P}^{-\T}$ is also a valid
factorization. The purpose of the second term in \eqref{eq:therectproblem} is
to account for this redundancy and put the two factors on ``equal footing''.
\COND{
To solve \eqref{eq:therectproblem}, starting from our initial estimates
$\mtx{U}_0$ and $\mtx{V}_0$ we apply the successive updates
\begin{align}
\label{rectgraddescentupdate_a}
\mtx{U}_{\tau+1}:= &\; \mtx{U}_\tau-\frac{\mu_{\tau+1}}{\opnorm{\mtx{U}_0}^2}\nabla_{\mtx{U}} g(\mtx{U}_\tau,\mtx{V}_\tau) \notag\\
= &\;  \mtx{U}_\tau-\frac{\mu_{\tau+1}}{\opnorm{\mtx{U}_0}^2}\left(\sum_{k=1}^{m} (\ip{\mtx{A}_k}{\mtx{U}_\tau \mtx{V}_\tau^\T} - b_k) \mtx{A}_k \mtx{V}_\tau + \frac{1}{4}\mtx{U}_\tau(\mtx{U}_\tau^\T\mtx{U}_\tau-\mtx{V}_\tau^\T\mtx{V}_\tau)\right)
\end{align}
\begin{align}
\label{rectgraddescentupdate_b}
\mtx{V}_{\tau+1}:= &\; \mtx{V}_\tau-\frac{\mu_{\tau+1}}{\opnorm{\mtx{V}_0}^2}\nabla_{\mtx{V}} g(\mtx{U}_\tau,\mtx{V}_\tau) \notag\\
= &\;  \mtx{V}_\tau-\frac{\mu_{\tau+1}}{\opnorm{\mtx{V}_0}^2}\left(\sum_{k=1}^{m} (\ip{\mtx{A}_k}{\mtx{U}_\tau \mtx{V}_\tau^\T} - b_k) \mtx{A}_k^\T \mtx{U}_\tau + \frac{1}{4}\mtx{V}_\tau(\mtx{V}_\tau^\T\mtx{V}_\tau-\mtx{U}_\tau^\T\mtx{U}_\tau)\right).
\end{align}
}{
To solve \eqref{eq:therectproblem}, starting from our initial estimates
$\mtx{U}_0$ and $\mtx{V}_0$ we apply the successive updates
\begin{align}
    \mtx{U}_{\tau+1}:= &\; \mtx{U}_\tau-\frac{\mu_{\tau+1}}{\opnorm{\mtx{U}_0}^2}\nabla_{\mtx{U}} g(\mtx{U}_\tau,\mtx{V}_\tau) \label{rectgraddescentupdate_a} \\
    \mtx{V}_{\tau+1}:= &\; \mtx{V}_\tau-\frac{\mu_{\tau+1}}{\opnorm{\mtx{V}_0}^2}\nabla_{\mtx{V}} g(\mtx{U}_\tau,\mtx{V}_\tau) \label{rectgraddescentupdate_b}
\end{align}
where $\nabla_{\mtx{U}} g(\mtx{U}_\tau,\mtx{V}_\tau)$ is equal to
\begin{align*}
    \sum_{k=1}^{m} (\ip{\mtx{A}_k}{\mtx{U}_\tau \mtx{V}_\tau^\T} - b_k) \mtx{A}_k \mtx{V}_\tau + \frac{1}{4}\mtx{U}_\tau(\mtx{U}_\tau^\T\mtx{U}_\tau-\mtx{V}_\tau^\T\mtx{V}_\tau)
\end{align*}
and $\nabla_{\mtx{V}} g(\mtx{U}_\tau,\mtx{V}_\tau)$ is equal to
\begin{align*}
    \sum_{k=1}^{m} (\ip{\mtx{A}_k}{\mtx{U}_\tau \mtx{V}_\tau^\T} - b_k) \mtx{A}_k^\T \mtx{U}_\tau + \frac{1}{4}\mtx{V}_\tau(\mtx{V}_\tau^\T\mtx{V}_\tau-\mtx{U}_\tau^\T\mtx{U}_\tau) \:.
\end{align*}
}
Again, \eqref{rectgraddescentupdate_a} and \eqref{rectgraddescentupdate_b} are
essentially gradient descent with a carefully chosen step size.
\begin{algorithm}
\caption{Rectangular Procrustes Flow (RPF)}
\label{alg:rpf}
\begin{algorithmic}
    \REQUIRE \mbox{$\{\mtx{A}_k\}_{k=1}^{m}, \{b_k\}_{k=1}^{m}, \{\alpha_\tau\}_{\tau=1}^{\infty}, \{\mu_\tau\}_{\tau=1}^\infty, \tinit \in\mathbb{N}$}.
\STATE \texttt{// Initialization phase.}
\STATE $\widetilde{\mtx{M}}_0 := \mtx{0}_{n_1 \times n_2}$.
\FOR{$\tau = 0, 1, ..., \tinit - 1$}
\STATE \texttt{// Projection onto rank $r$ matrices.}
\STATE \mbox{$\widetilde{\mtx{M}}_{\tau+1} \gets \mathcal{P}_{r}(\widetilde{\mtx{M}}_\tau - \alpha_{\tau+1} \sum_{k=1}^{m} (\ip{\mtx{A}_k}{\widetilde{\mtx{M}}_\tau} - b_k) \mtx{A}_k)$}.
\ENDFOR
\COND{
\STATE \texttt{// SVD of $\widetilde{\mtx{M}}_{\tinit}$, with $\mtx{C} \in \R^{n_1 \times r}, \mtx{\Sigma} \in \R^{r \times r}, \mtx{D} \in \R^{n_2 \times r}$ .}
}{
\STATE \texttt{// SVD of $\widetilde{\mtx{M}}_{\tinit}$, with}
\STATE \texttt{// $\mtx{C} \in \R^{n_1 \times r}, \mtx{\Sigma} \in \R^{r \times r}, \mtx{D} \in \R^{n_2 \times r}$ .}
}
\STATE $\mtx{C} \mtx{\Sigma} \mtx{D}^\T := \widetilde{\mtx{M}}_{\tinit}$.
\STATE $\mtx{U}_0 := \mtx{C} \mtx{\Sigma}^{1/2}$.
\STATE $\mtx{V}_0 := \mtx{D} \mtx{\Sigma}^{1/2}$.
\STATE \texttt{// Gradient descent phase.}
\REPEAT
\COND{
\STATE $\mtx{U}_{\tau+1} \gets \mtx{U}_\tau-\mu_{\tau+1}\frac{1}{\opnorm{\mtx{U}_0}^2}\left(\sum_{k=1}^{m} (\ip{\mtx{A}_k}{\mtx{U}_\tau \mtx{V}_\tau^\T} - b_k) \mtx{A}_k \mtx{V}_\tau + \frac{1}{4}\mtx{U}_\tau(\mtx{U}_\tau^\T\mtx{U}_\tau-\mtx{V}_\tau^\T\mtx{V}_\tau)\right)$.
\STATE $\mtx{V}_{\tau+1} \gets \mtx{V}_\tau-\mu_{\tau+1}\frac{1}{\opnorm{\mtx{V}_0}^2}\left(\sum_{k=1}^{m} (\ip{\mtx{A}_k}{\mtx{U}_\tau \mtx{V}_\tau^\T} - b_k) \mtx{A}_k^\T \mtx{U}_\tau + \frac{1}{4}\mtx{V}_\tau(\mtx{V}_\tau^\T\mtx{V}_\tau-\mtx{U}_\tau^\T\mtx{U}_\tau)\right)$.
}{
\STATE $\mtx{U}_{\tau+1} \gets \mtx{U}_\tau-\frac{\mu_{\tau+1}}{\opnorm{\mtx{U}_0}^2}\nabla_{\mtx{U}} g(\mtx{U}_\tau,\mtx{V}_\tau)$.
\STATE $\mtx{V}_{\tau+1} \gets \mtx{V}_\tau-\frac{\mu_{\tau+1}}{\opnorm{\mtx{V}_0}^2}\nabla_{\mtx{V}} g(\mtx{U}_\tau,\mtx{V}_\tau)$.
}
\UNTIL{convergence}
\end{algorithmic}
\end{algorithm}

\section{Main Results}
For our theoretical results we shall focus on affine maps $\mathcal{A}$ which
obey the matrix Restricted Isometry Property (RIP).
\COND{\pagebreak}{}
\begin{definition}[Restricted Isometry Property (RIP) \cite{candes2005decoding, recht10}]
The map $\A$ satisfies $r$-RIP with constant $\delta_r$, if
\begin{align*}
  (1-\delta_r) \fronorm{\mtx{X}}^2 \leq \twonorm{ \A(\mtx{X}) }^2 \leq (1+\delta_r) \fronorm{\mtx{X}}^2,
\end{align*}
holds for all matrices $\mtx{X}\in\R^{n_1\times n_2}$ of rank at most $r$.
\end{definition}
As mentioned earlier it is not possible to recover the factors $\mtx{U}$ and $\mtx{V}$ in \eqref{eq:psdequiv} exactly. For example, in the PSD case it is only possible to recover $\mtx{U}$ up to a certain
rotational factor as if $\mtx{U}$ obeys \eqref{mateq}, then so does any matrix
$\mtx{U}\mtx{R}$ with $\mtx{R}\in\R^{r\times r}$ an orthonormal matrix satisfying
$\mtx{R}^\T \mtx{R}=\mtx{I}_r$.
This naturally leads to defining the distance
between two matrices $\mtx{U},\mtx{X}\in\R^{n\times r}$ as
\begin{align}
\label{procrustesprob}
\text{dist}(\mtx{U},\mtx{X}):=\min_{\mtx{R} \in \R^{r\times r}:\text{ }\mtx{R}^\T\mtx{R}=\mtx{I}_r} \fronorm{\mtx{U} - \mtx{XR}}.
\end{align}
We note that this distance is the solution to the classic \emph{orthogonal
Procrustes problem} (hence the name of the algorithm). It is known that the
optimal rotation matrix $\mtx{R}$ minimizing $\fronorm{\mtx{U} - \mtx{XR}}$ is
equal to $\mtx{R}=\mtx{A}\mtx{B}^\T$, where $\mtx{A}\mtx{\Sigma}\mtx{B}^\T$ is
the singular value decomposition (SVD) of $\mtx{X}^\T\mtx{U}$. We now have all
of the elements in place to state our main results.

\subsection{Quadratic measurements}
When the low-rank matrix $\mtx{M}\in\R^{n\times n}$ is PSD we are interested in finding a matrix $\mtx{U}\in\R^{n\times r}$ obeying quadratic equations of the form
\begin{align}
\label{mateqPSD}
\A( \mtx{U}\mtx{U}^T  )=\vct{b},
\end{align}
where we assume $\vct{b}=\mathcal{A}(\mtx{M})$ for a planted rank-$r$ solution $\mtx{M}=\mtx{X}\mtx{X}^\T\in\R^{n\times n}$ with $\mtx{X}\in\R^{n\times r}$. We wish to recover $\mtx{X}$. This is of course only possible up to a certain
rotational factor as if $\mtx{U}$ obeys \eqref{mateq}, then so does any matrix
$\mtx{U}\mtx{R}$ with $\mtx{R}\in\R^{r\times r}$ an orthonormal matrix satisfying
$\mtx{R}^\T \mtx{R}=\mtx{I}_r$. Our first theorem shows that Procrustes Flow indeed recovers $\mtx{X}$ up to this ambiguity factor.
\begin{theorem}
\label{mainthm} Let $\mtx{M}\in\R^{n\times n}$ be an arbitrary rank-$r$ symmetric positive semidefinite matrix with
singular values $\sigma_1(\mtx{M}) \geq \sigma_2(\mtx{M}) \geq ... \geq
\sigma_r(\mtx{M}) > 0$ and condition number $\kappa =
\sigma_1(\mtx{M})/\sigma_r(\mtx{M})$. Assume $\mtx{M} = \XXT$ for some $\mtx{X} \in \R^{n \times r}$ and let
$\vct{b}=\A(\mtx{M})\in\R^m$ be $m$ linear measurements.
Furthermore, assume the mapping $\mathcal{A}$ obeys rank-$6r$
RIP with RIP constant $\delta_{6r} \le 1/10$. Also let $\alpha_\tau=1/m$ for all $\tau=1,2,\ldots$.
Then, using $\tinit \ge
\log(\sqrt{r}\kappa)+2$ iterations of the initialization phase of Procrustes
Flow as stated in Algorithm~\ref{alg:pf} yields a solution $\mtx{U}_0$ obeying
\begin{align}
\label{initthm}
\emph{dist}\left(\mtx{U}_0,\mtx{X}\right)\le \frac{1}{4}\sigma_r(\mtx{X}).
\end{align}
Furthermore, take a constant step size $\mu_\tau=\mu$ for all $\tau=1,2,\ldots$,
with $\mu \le 36/425$.  Then, starting from any initial solution obeying
\eqref{initthm}, the $\tau$-th iterate of Algorithm~\ref{alg:pf} satisfies
\begin{align}
\label{convthm}
\emph{dist}\left(\mtx{U}_\tau,\mtx{X}\right)\le \frac{1}{4}\left(1-  \frac{8}{25} \frac{\mu}{\kappa}\right)^{\frac{\tau}{2}}\sigma_r(\mtx{X}).
\end{align}
\end{theorem}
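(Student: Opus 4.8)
The plan is to prove the two displayed bounds in order, the second by induction on $\tau$ with \eqref{initthm} as the base case.

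\emph{Initialization bound \eqref{initthm}.} First I would observe that \eqref{update} (with the prescribed step size $\alpha_\tau$) is a singular value projection step on rank-$r$ PSD matrices for the least-squares objective $\twonorm{\A(\cdot)-\vct b}^2$, whose minimizer is $\mtx M$. Since $\mathcal P_r$ returns a best rank-$r$ approximation and $\widetilde{\mtx M}_\tau$, $\widetilde{\mtx M}_{\tau+1}$, $\mtx M$ all have rank $\le r$, the usual SVP argument (expand $\fronorm{\widetilde{\mtx M}_{\tau+1}-\mtx M}^2$, use optimality of $\mathcal P_r$, and control $(\mathrm{Id}-\A^\T\A)$ on the rank-$\le 4r$ difference using $\delta_{6r}\le 1/10$) yields a geometric contraction $\fronorm{\widetilde{\mtx M}_{\tau+1}-\mtx M}\le\rho\,\fronorm{\widetilde{\mtx M}_\tau-\mtx M}$ with $\rho\le 1/5$. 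Starting from $\widetilde{\mtx M}_0=\mtx 0$ with $\fronorm{\mtx M}\le\sqrt r\,\sigma_1(\mtx M)$ this gives $\fronorm{\widetilde{\mtx M}_{\tinit}-\mtx M}\le\rho^{\tinit}\sqrt r\,\sigma_1(\mtx M)$. I would then invoke a standard perturbation inequality for symmetric factorizations, $\dist^2(\mtx U_0,\mtx X)\le\frac{1}{2(\sqrt2-1)\,\sigma_r(\mtx M)}\fronorm{\widetilde{\mtx M}_{\tinit}-\mtx M}^2$, and use $\sigma_1(\mtx M)/\sqrt{\sigma_r(\mtx M)}=\kappa\,\sigma_r(\mtx X)$ to conclude $\dist(\mtx U_0,\mtx X)\le\frac{1}{\sqrt{2(\sqrt2-1)}}\,\rho^{\tinit}\sqrt r\,\kappa\,\sigma_r(\mtx X)$. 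Since $\rho\le 1/5<e^{-1}$, choosing $\tinit\ge\log(\sqrt r\kappa)+2$ makes the prefactor at most $\frac14$, giving \eqref{initthm}.

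\emph{One refinement step.} Now suppose $\dist(\mtx U_\tau,\mtx X)\le\frac14\sigma_r(\mtx X)$. Let $\mtx R$ be the optimal Procrustes rotation aligning $\mtx U_\tau$ with $\mtx X$, set $\mtx\Delta:=\mtx U_\tau-\mtx{XR}$ (so $\dist(\mtx U_\tau,\mtx X)=\fronorm{\mtx\Delta}$) and $\mtx E:=\mtx U_\tau\mtx U_\tau^\T-\XXT$. Two facts drive the analysis: first-order optimality of $\mtx R$ makes $\mtx R^\T\mtx X^\T\mtx\Delta$ symmetric, and algebra gives $\mtx E=\mtx\Delta\mtx U_\tau^\T+\mtx U_\tau\mtx\Delta^\T-\mtx\Delta\mtx\Delta^\T$. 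Because $\dist(\mtx U_{\tau+1},\mtx X)\le\fronorm{\mtx U_{\tau+1}-\mtx{XR}}$, squaring the update \eqref{graddescentupdate} gives
\begin{equation*}
\fronorm{\mtx U_{\tau+1}-\mtx{XR}}^2=\fronorm{\mtx\Delta}^2-\frac{2\mu}{\opnorm{\mtx U_0}^2}\ip{\nabla f(\mtx U_\tau)}{\mtx\Delta}+\frac{\mu^2}{\opnorm{\mtx U_0}^4}\fronorm{\nabla f(\mtx U_\tau)}^2 .
\end{equation*}
The heart of the proof is a restricted-strong-convexity/regularity bound of the form
\begin{equation*}
\ip{\nabla f(\mtx U_\tau)}{\mtx\Delta}\ \ge\ c_1\,\fronorm{\mtx E}^2+\tfrac14\,\sigma_r(\mtx M)\,\fronorm{\mtx\Delta}^2
\end{equation*}
for an absolute $c_1>0$; it should follow by writing $\nabla f(\mtx U_\tau)=\A^\T\A(\mtx E)\mtx U_\tau$, inserting the identity for $\mtx E$, using symmetry of $\mtx R^\T\mtx X^\T\mtx\Delta$, and comparing $\A^\T\A$ with the identity on the relevant rank-$\le 6r$ matrices, with the higher-order terms $\mtx\Delta\mtx\Delta^\T$ absorbed using $\fronorm{\mtx\Delta}\le\frac14\sigma_r(\mtx X)$. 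Complementing this, a variational argument with RIP gives $\fronorm{\nabla f(\mtx U_\tau)}\le(1+\delta_{6r})\opnorm{\mtx U_\tau}\fronorm{\mtx E}$, and the hypotheses $\dist(\mtx U_\tau,\mtx X),\dist(\mtx U_0,\mtx X)\le\frac14\sigma_r(\mtx X)$ yield $\tfrac34\sigma_1(\mtx X)\le\opnorm{\mtx U_0},\opnorm{\mtx U_\tau}\le\tfrac54\sigma_1(\mtx X)$, hence $\fronorm{\nabla f(\mtx U_\tau)}^2\le c_3\,\sigma_1(\mtx M)\,\fronorm{\mtx E}^2$ and $\opnorm{\mtx U_0}^2\le\tfrac{25}{16}\sigma_1(\mtx M)$.

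\emph{Closing the induction and the main obstacle.} Plugging the two estimates into the expansion and choosing $\mu$ small enough that $\mu c_3\sigma_1(\mtx M)\le 2c_1\opnorm{\mtx U_0}^2$ (this is where $\mu\le 36/425$ enters, once $c_1,c_3$ are pinned down under $\delta_{6r}\le1/10$) kills the $\fronorm{\mtx E}^2$ contributions and leaves
\begin{equation*}
\fronorm{\mtx U_{\tau+1}-\mtx{XR}}^2\ \le\ \Big(1-\frac{\mu\,\sigma_r(\mtx M)}{2\opnorm{\mtx U_0}^2}\Big)\fronorm{\mtx\Delta}^2\ \le\ \Big(1-\frac{8}{25}\frac{\mu}{\kappa}\Big)\dist^2(\mtx U_\tau,\mtx X),
\end{equation*}
using $\opnorm{\mtx U_0}^2\le\tfrac{25}{16}\sigma_1(\mtx M)$ and $\sigma_r(\mtx M)/\sigma_1(\mtx M)=1/\kappa$. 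In particular $\dist(\mtx U_{\tau+1},\mtx X)\le\dist(\mtx U_\tau,\mtx X)\le\frac14\sigma_r(\mtx X)$, so the inductive hypothesis is preserved; iterating from $\tau=0$ with base case \eqref{initthm} gives \eqref{convthm}. The delicate step is clearly the regularity bound: one must extract a $\sigma_r(\mtx M)\fronorm{\mtx\Delta}^2$ term with a constant ($\tfrac14$) large enough to produce the advertised rate, when $\A^\T\A$ is near-identity only on low-rank matrices (forcing rank accounting so that only $\delta_{6r}$, not a higher-rank RIP constant, appears) and the PSD parametrization introduces both the curvature term $\mtx\Delta\mtx\Delta^\T$ and the rotational gauge. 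I expect this to require splitting $\mtx\Delta$ into its components in and orthogonal to $\mathrm{span}(\mtx X)$ --- the parallel part contributing a genuine strongly-convex term and the orthogonal part controlled through the quartic piece of $\fronorm{\mtx E}^2$ together with the basin radius $\tfrac14\sigma_r(\mtx X)$ --- plus first-order optimality of the Procrustes alignment. Everything else (the SVP contraction, the gradient-norm bound, and the numerical bookkeeping that turns $\delta_{6r}\le1/10$ and $\mu\le 36/425$ into the rate $1-\tfrac{8}{25}\mu/\kappa$) is routine.
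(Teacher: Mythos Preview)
Your proposal is correct and follows essentially the same route as the paper: SVP contraction plus the factorization perturbation bound $\dist^2(\mtx U_0,\mtx X)\le\frac{1}{2(\sqrt2-1)\sigma_r^2(\mtx X)}\fronorm{\widetilde{\mtx M}_{T_0}-\mtx M}^2$ for \eqref{initthm}, and a regularity-type inequality $\langle\nabla f(\mtx U),\mtx\Delta\rangle\ge\frac14\sigma_r^2(\mtx X)\fronorm{\mtx\Delta}^2+\frac15\fronorm{\mtx E}^2$ combined with a gradient-norm bound, substituted into the squared one-step update, for \eqref{convthm}. The only organizational difference is that the paper routes the regularity bound through the reference function $F(\mtx U)=\frac14\fronorm{\mtx{UU}^\T-\mtx{XX}^\T}^2$ (proving it first for $F$ by trace expansion and completing the square---no orthogonal decomposition of $\mtx\Delta$ along $\mathrm{span}(\mtx X)$ is actually needed, only $\opnorm{\mtx\Delta}\le\frac14\sigma_r(\mtx X)$---then transferring to $f$ via a one-line RIP comparison) and packages the two inequalities into the $\RC(\alpha,\beta,\delta)$ formalism before invoking a generic descent lemma, whereas you keep them separate and combine at the end; the arithmetic producing $\mu\le 36/425$ and the rate $1-\frac{8}{25}\mu/\kappa$ is the same.
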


\subsection{Bilinear measurements}
In the more general case when the low-rank matrix $\mtx{M}\in\R^{n_1\times n_2}$ is rectangular we are interested in finding matrices $\mtx{U}\in\R^{n_1\times r}$, $\mtx{V} \in \R^{n_2\times r}$ obeying bilinear equations of the form
\begin{align}
\label{mateq}
\A( \UVT  )=\vct{b},
\end{align}
where we assume $\vct{b}=\A(\mtx{M})$ for a planted rank-$r$ solution
$\mtx{M}\in\R^{n_1\times n_2}$ with $\mtx{M}=\mtx{X}\mtx{Y}^\T$ where
$\mtx{X}\in\R^{n_1\times r}$ and $\mtx{Y}\in\R^{n_2\times r}$.  Again we wish
to recover the factors $\mtx{X}$ and $\mtx{Y}$.  The next theorem shows that we
can also provide a guarantee similar to that of Theorem \ref{mainthm} for this
more general rectangular case.
\begin{theorem}
\label{mainthm_general}
Let $\mtx{M}\in\R^{n_1\times n_2}$ be an arbitrary rank-$r$ matrix with
singular values $\sigma_1(\mtx{M}) \geq \sigma_2(\mtx{M}) \geq ... \geq
\sigma_r(\mtx{M}) > 0$ and condition number $\kappa =
\sigma_1(\mtx{M})/\sigma_r(\mtx{M})$. Let $\mtx{M} = \mtx{A}\mtx{\Sigma}\mtx{B}^\T$
be the SVD of $\mtx{M}$ and define $\mtx{X} = \mtx{A} \mtx{\Sigma}^{1/2} \in \R^{n_1 \times r}$ and
$\mtx{Y} = \mtx{B} \mtx{\Sigma}^{1/2} \in \R^{n_2 \times r}$.
Also, let
$\vct{b}=\A(\mtx{M})\in\R^m$ be $m$  linear measurements where the mapping $\mathcal{A}$ obeys rank-$6r$
RIP with RIP constant $\delta_{6r} \le 1/25$. Also let $\alpha_\tau=1/m$ for all $\tau=1,2,\ldots$.
Then, using $\tinit \ge
3\log(\sqrt{r}\kappa)+5$ iterations of the initialization phase of Procrustes
Flow as stated in Algorithm~\ref{alg:rpf} yields a solution $\mtx{U}_0, \mtx{V}_0$ obeying
\begin{align}
\label{initthm_general}
\emph{dist}\left(\blockvec{\mtx{U}_0}{\mtx{V}_0}, \blockvec{\mtx{X}}{\mtx{Y}} \right)\le \frac{1}{4}\sigma_r(\mtx{X}).
\end{align}
Furthermore, take a constant step size $\mu_\tau=\mu$ for all $\tau=1,2,\ldots$
and assume $\mu \le 2/187$.  Then, starting from any initial solution obeying
\eqref{initthm_general}, the $\tau$-th iterate of Algorithm~\ref{alg:rpf} satisfies
\begin{align}
\label{convthm_general}
\emph{dist}\left(\blockvec{\mtx{U}_\tau}{\mtx{V}_\tau},\blockvec{\mtx{X}}{\mtx{Y}}\right)\le \frac{1}{4}\left(1-  \frac{4}{25} \frac{\mu}{\kappa}\right)^{\frac{\tau}{2}}\sigma_r(\mtx{X}).
\end{align}
\end{theorem}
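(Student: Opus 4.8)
The plan is to mirror the PSD argument of Theorem~\ref{mainthm} but carried out on the stacked matrix $\mtx{W} = \blockvec{\mtx{U}}{\mtx{V}} \in \R^{(n_1+n_2)\times r}$, with planted point $\mtx{Z} = \blockvec{\mtx{X}}{\mtx{Y}}$. The two phases are handled separately. For the initialization phase, I would analyze the projected-gradient recursion \eqref{update} directly on $n_1\times n_2$ matrices: with $\alpha_\tau = 1/m$ and rank-$6r$ RIP, the map $\mtx{Z}\mapsto \mtx{Z} - \frac1m\sum_k\ip{\mtx{A}_k}{\mtx{Z}}\mtx{A}_k$ restricted to rank-$\le 3r$ matrices is a contraction toward $\mtx{M}$ with factor $\delta_{3r}\le \delta_{6r}$, up to the error incurred by the best rank-$r$ truncation $\mathcal{P}_r$. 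Since $\mtx{M}$ itself has rank $r$, a standard argument (as in iterative hard thresholding / singular value projection) gives $\fronorm{\widetilde{\mtx M}_\tau - \mtx{M}} \le (2\delta_{6r})^\tau \fronorm{\mtx{M}} \le (2/25)^\tau \fronorm{\mtx{M}}$, so after $\tau_0 \gtrsim \log(\sqrt r\kappa)$ steps we get $\fronorm{\widetilde{\mtx M}_{\tau_0}-\mtx M}$ small relative to $\sigma_r(\mtx M)$. The remaining step is to pass from closeness of $\widetilde{\mtx M}_{\tau_0}$ to $\mtx M$ in Frobenius norm to closeness of the factors $(\mtx U_0,\mtx V_0)$ obtained from its SVD to $(\mtx X,\mtx Y)$; this is exactly a matrix-perturbation (Wedin / Davis--Kahan type) bound for the square-root factorization, of the form $\mathrm{dist}\big(\blockvec{\mtx U_0}{\mtx V_0},\blockvec{\mtx X}{\mtx Y}\big)^2 \le \frac{c}{\sigma_r(\mtx M)}\fronorm{\widetilde{\mtx M}_{\tau_0}-\mtx M}^2$. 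The factor $3$ and the additive $5$ in the iteration count (versus the PSD case) come from the extra $\log$ losses in this chain and the stricter $\delta_{6r}\le 1/25$; I expect to state a self-contained lemma to this effect and cite the analogous PSD lemma for the proof.

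For the refinement phase, the core is a one-step contraction: if $\mtx W_\tau=\blockvec{\mtx U_\tau}{\mtx V_\tau}$ satisfies $\mathrm{dist}(\mtx W_\tau,\mtx Z)\le \tfrac14\sigma_r(\mtx X)$, then $\mathrm{dist}^2(\mtx W_{\tau+1},\mtx Z) \le \big(1-\tfrac{4}{25}\tfrac{\mu}{\kappa}\big)\,\mathrm{dist}^2(\mtx W_\tau,\mtx Z)$, after which \eqref{convthm_general} follows by induction (noting the contraction preserves the radius, so the hypothesis is maintained). Fix the optimal rotation $\mtx R$ for $\mtx W_\tau$ and write $\mtx H = \mtx W_\tau - \mtx Z\mtx R$, $\mtx H = \blockvec{\mtx H_U}{\mtx H_V}$. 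Expanding the squared distance after the update, and using $\mathrm{dist}^2(\mtx W_{\tau+1},\mtx Z)\le \fronorm{\mtx W_{\tau+1} - \mtx Z\mtx R}^2$ (the same $\mtx R$ is feasible), the cross term is $-2\big\langle \nabla g(\mtx U_\tau,\mtx V_\tau)\ (\text{rescaled}),\ \mtx H\big\rangle$ and there is a positive quadratic remainder $\mu^2/\opnorm{\mtx W_0}^4 \cdot (\text{gradient norm})^2$. The two quantities I need are: (i) a \emph{regularity / local curvature} lower bound, $\langle \nabla g, \mtx H\rangle \gtrsim \sigma_r(\mtx X)^2 \fronorm{\mtx H}^2$ — i.e.\ the gradient has positive correlation with the error direction — and (ii) a \emph{smoothness} upper bound on $\fronorm{\nabla g}$ so that the quadratic term is absorbed. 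Here RIP is used to replace $\A(\mtx U_\tau\mtx V_\tau^\T - \mtx M)$ by $\mtx U_\tau\mtx V_\tau^\T - \mtx M$ up to $\delta_{6r}$ (valid since $\mtx U_\tau\mtx V_\tau^\T - \mtx M$ has rank $\le 3r$), reducing everything to an essentially RIP-free identity about $\blockvec{\mtx U}{\mtx V}$, $\blockvec{\mtx X}{\mtx Y}$, plus the regularizer $\tfrac1{16}\fronorm{\mtx U^\T\mtx U - \mtx V^\T\mtx V}^2$; the key algebraic fact is that $\mtx U\mtx V^\T - \mtx X\mtx Y^\T$ together with $\mtx U^\T\mtx U-\mtx V^\T\mtx V$ controls $\mtx{WW}^\T - \mtx{ZZ}^\T$, which in turn controls $\fronorm{\mtx H}^2$ in the local regime via the standard bound $\fronorm{\mtx{WW}^\T-\mtx{ZZ}^\T}^2 \ge 2(\sqrt2-1)\sigma_r(\mtx Z)^2\,\mathrm{dist}^2(\mtx W,\mtx Z)$. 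The balancing regularizer is precisely what makes the $\mtx U\mtx V^\T$ part behave like a symmetric $\mtx{WW}^\T/2$ problem.

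The step I expect to be the main obstacle is the regularity lower bound (i), specifically keeping track of the interaction between the measurement term and the balancing regularizer and of the cross terms $\fronorm{\mtx H_U^\T\mtx H_U}$, $\fronorm{\mtx H_U\mtx H_V^\T}$ etc.\ which are only controlled because $\fronorm{\mtx H}\le \tfrac14\sigma_r(\mtx X)$ is small; getting the constants to close (so that the RIP slack $\delta_{6r}\le 1/25$, the radius $\tfrac14$, and the step-size cap $\mu\le 2/187$ are mutually consistent and yield the stated rate $1-\tfrac{4}{25}\tfrac{\mu}{\kappa}$) is the delicate bookkeeping. I would first prove the clean RIP-free version of (i)--(ii) for arbitrary balanced-enough $(\mtx U,\mtx V)$ near $(\mtx X,\mtx Y)$, then reinsert the $\delta_{6r}$ perturbations, and finally combine with the quadratic term using $\opnorm{\mtx W_0}^2 \asymp \sigma_1(\mtx X)^2$ (guaranteed by the initialization bound) to get the per-iteration factor, noting that the $\kappa$ in the rate enters through the ratio $\sigma_r(\mtx X)^2/\opnorm{\mtx W_0}^2 \asymp 1/\kappa$.
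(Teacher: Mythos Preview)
Your proposal is correct and follows essentially the same route as the paper: lift to $\mtx{W}=\blockvec{\mtx{U}}{\mtx{V}}$ and $\mtx{Z}=\blockvec{\mtx{X}}{\mtx{Y}}$, establish a regularity condition $\ip{\nabla g(\mtx{W})}{\mtx{W}-\mtx{ZR}}\ge \tfrac{1}{\alpha}\fronorm{\mtx{W}-\mtx{ZR}}^2+\tfrac{1}{\beta}\fronorm{\nabla g(\mtx{W})}^2$ by reducing via RIP to the reference function $F(\mtx{W})=\tfrac14\fronorm{\mtx{WW}^\T-\mtx{ZZ}^\T}^2$ (reusing the PSD Lemma~\ref{lemma:expected_f_rc_cond}), and handle initialization via the SVP contraction plus a square-root--factor perturbation lemma of exactly the form you describe. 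The one algebraic device you will want when carrying out the bookkeeping is the auxiliary point $\widetilde{\mtx Z}=\blockvec{\mtx X}{-\mtx Y}$, which satisfies $\widetilde{\mtx Z}^\T\mtx Z=\mtx 0$ and $2\,\Sym(\mtx M)=\mtx{ZZ}^\T-\widetilde{\mtx Z}\widetilde{\mtx Z}^\T$; this is what makes the regularizer contribution surface cleanly as the nonnegative term $\fronorm{\widetilde{\mtx Z}\widetilde{\mtx Z}^\T\mtx W}^2$ in $\ip{\nabla g(\mtx W)}{\mtx W-\mtx{ZR}}$ and lets you close the constants without tracking the diagonal blocks $\mtx{UU}^\T-\mtx{XX}^\T$, $\mtx{VV}^\T-\mtx{YY}^\T$ directly.
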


The above theorem shows that Procrustes Flow algorithm achieves a good
initialization under the RIP assumptions on the mapping
$\mathcal{A}$. Also, starting from any sufficiently accurate initialization the
algorithm exhibits geometric convergence to the unknown matrix $\mtx{M}$. We
note that in the above result we have not attempted to optimize the constants.
Furthermore, there is a natural tradeoff involved between the upper bound on
the RIP constant, the radius in which PF is contractive \eqref{initthm_general}, and
its rate of convergence \eqref{convthm_general}. In particular, as it will become clear
in the proofs one can increase the radius in which PF is contractive (increase
the constant $1/4$ in \eqref{initthm_general}) and the rate of convergence (increase
the constant $4/25$ in \eqref{convthm_general}) by assuming a smaller upper bound on
the RIP constant.

The most common measurement ensemble which satisfies the isotropy and RIP
assumptions is the Gaussian ensemble here each matrix $\mtx{A}_k$ has i.i.d.
$\mathcal{N}(0,1/m)$ entries.\footnote{We note that in the PSD case the so
called \emph{spiked Gaussian ensemble} would be the right equivalent. In this
case each symmetric matrix $\mtx{A}_k$ has $\Normal(0, 1/m)$ entries on the
diagonal and $\Normal(0, 1/2m)$ entries elsewhere.} For this ensemble to
achieve a RIP constant of $\delta_r$, we require at least $m =
\Omega(\frac{1}{\delta_r^2} nr)$ measurements. Using equation \eqref{convthm_general} together with a simple calculation detailed in Appendix~\ref{sec:appendix:simplecalc}, we can conclude that for $\mtx{M}_\tau=\mtx{U}_\tau\mtx{V}_\tau^\T$, we have
\begin{align}
\label{myineq}
\fronorm{\mtx{M}_\tau-\mtx{M}}  &\le \frac{9}{4} \sqrt{\sigma_1(\mtx{M})}\cdot\text{dist}\left(\blockvec{\mtx{U}_\tau}{\mtx{V}_\tau},\blockvec{\mtx{X}}{\mtx{Y}}\right)\nonumber\\
                                &\le \frac{9}{16} \sqrt{\sigma_1(\mtx{M})\sigma_r(\mtx{M})}\left(1-  \frac{4}{25} \frac{\mu}{\kappa}\right)^{\frac{\tau}{2}}\nonumber\\
                                &\le \frac{9}{16}\fronorm{\mtx{M}}\left(1-  \frac{4}{25} \frac{\mu}{\kappa}\right)^{\frac{\tau}{2}}.
\end{align}
Thus, applying Theorem \ref{mainthm_general} to this measurement ensemble, we
conclude that the Procrustes Flow algorithm yields a solution with relative
error ($\fronorm{\mtx{M}_\tau-\mtx{M}}/\fronorm{\mtx{M}}\le \epsilon$)
in $\BigO(\kappa\log(1/\epsilon))$ iterations using only $\BigOmega(nr)$
measurements. We would like to note that if more measurements are available it
is not necessary to use multiple projected gradient updates in the
initialization phase. In particular, for the Gaussian model if $m = \BigOmega(n
r^2 \kappa^2)$, then \eqref{initthm} will hold after the first iteration
($\tinit=1$).

\paragraph{How to verify the initialization is complete.}
Theorems \ref{mainthm} and \ref{mainthm_general} require that $\tinit =
\BigOmega(\log(\sqrt{r}\kappa))$, but $\kappa$ is a property of $\mtx{M}$ and
is hence unknown.  However, under the same hypotheses regarding the RIP
constant in Theorems \ref{mainthm} and \ref{mainthm_general}, we can use each
iterate of initialization to test whether or not we have entered the radius of
convergence.
The following lemma
establishes a sufficient condition we can check using only information from
$\widetilde{\mtx{M}}_\tau$.  We establish this result only in the symmetric case-- the
extension to the general case is straightforward. The proof is deferred to
Appendix~\ref{proofstopinit}.
\begin{lemma}
\label{lemma:stopinit}
Assume the RIP constant of $\A$ satisfies $\delta_{2r} \leq 1/10$. Let $\widetilde{\mtx{M}}_\tau$ denote the
$\tau$-th step of the initialization phase in Algorithm~\ref{alg:pf}, and let $\mtx{U}_0 \in \R^{n \times r}$ be the
such that $\widetilde{\mtx{M}}_\tau = \mtx{U}_0 \mtx{U}_0^\T$. Define
\begin{align*}
  e_\tau := \twonorm{ \A(\widetilde{\mtx{M}}_\tau) - \vct{b} } = \twonorm{ \A(\widetilde{\mtx{M}}_\tau - \XXT) } .
\end{align*}
Then, if
\begin{align*}
  e_\tau \leq \frac{3}{20} \sigma_r(\widetilde{\mtx{M}}_\tau) \:,
\end{align*}
we have that
\begin{align*}
  \dist(\mtx{U}_0, \mtx{X}) \leq \frac{1}{4} \sigma_r(\mtx{X}) \:.
\end{align*}
\end{lemma}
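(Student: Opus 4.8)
\emph{Proof strategy.} The plan is to pass from the measurement residual $e_\tau$ to a Frobenius-norm bound on $\widetilde{\mtx{M}}_\tau - \XXT$ via RIP, then feed that bound into a standard perturbation inequality for the orthogonal Procrustes problem to control $\dist(\mtx{U}_0,\mtx{X})$, and finally use Weyl's inequality to reconcile the fact that the hypothesis is stated in terms of $\sigma_r(\widetilde{\mtx{M}}_\tau)$ while the desired conclusion is stated in terms of $\sigma_r(\mtx{X})$. Note that since $\dist$ is invariant under right multiplication by an orthogonal matrix, the particular choice of $\mtx{U}_0$ with $\widetilde{\mtx{M}}_\tau = \mtx{U}_0\mtx{U}_0^\T$ is irrelevant.

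\emph{Step 1 (RIP).} Both $\widetilde{\mtx{M}}_\tau$ and $\XXT$ have rank at most $r$, so $\widetilde{\mtx{M}}_\tau - \XXT$ has rank at most $2r$. The lower inequality in the $2r$-RIP property (valid since $\delta_{2r}\le 1/10 \le \delta_{6r}$-type hypothesis, here we only need $\delta_{2r}\le 1/10$) then yields
\begin{align*}
  \fronorm{\widetilde{\mtx{M}}_\tau - \XXT}^2 \le \frac{\twonorm{\A(\widetilde{\mtx{M}}_\tau - \XXT)}^2}{1-\delta_{2r}} = \frac{e_\tau^2}{1-\delta_{2r}} \le \frac{10}{9}\,e_\tau^2 .
\end{align*}

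\emph{Step 2 (Weyl).} Since $\opnorm{\cdot}\le\fronorm{\cdot}$, Weyl's inequality applied to $\widetilde{\mtx{M}}_\tau = \XXT + (\widetilde{\mtx{M}}_\tau - \XXT)$ combined with Step 1 and the hypothesis $e_\tau\le \tfrac{3}{20}\sigma_r(\widetilde{\mtx{M}}_\tau)$ gives
\begin{align*}
  \sigma_r(\mtx{X})^2 = \sigma_r(\XXT) \ge \sigma_r(\widetilde{\mtx{M}}_\tau) - \fronorm{\widetilde{\mtx{M}}_\tau - \XXT} \ge \Bigl(1 - \tfrac{\sqrt{10}}{20}\Bigr)\sigma_r(\widetilde{\mtx{M}}_\tau) > 0 .
\end{align*}
Then, invoking the standard Procrustes perturbation bound
\begin{align*}
  \dist(\mtx{U}_0,\mtx{X})^2 \le \frac{1}{2(\sqrt{2}-1)\,\sigma_r(\mtx{X})^2}\,\fronorm{\mtx{U}_0\mtx{U}_0^\T - \XXT}^2
\end{align*}
(the same type of inequality used in the analysis of the refinement phase), substituting $\mtx{U}_0\mtx{U}_0^\T = \widetilde{\mtx{M}}_\tau$, Step 1, the hypothesis on $e_\tau$, and the Step 2 lower bound in the denominator, we obtain
\begin{align*}
  \dist(\mtx{U}_0,\mtx{X})^2 \le \frac{(10/9)(3/20)^2}{2(\sqrt{2}-1)}\cdot\frac{\sigma_r(\widetilde{\mtx{M}}_\tau)^2}{\sigma_r(\mtx{X})^2} \le \frac{(10/9)(3/20)^2}{2(\sqrt{2}-1)\,(1-\sqrt{10}/20)}\,\sigma_r(\widetilde{\mtx{M}}_\tau) .
\end{align*}

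\emph{Step 3 (arithmetic).} Using the Step 2 bound once more on the right-hand side of the target, $\tfrac1{16}\sigma_r(\mtx{X})^2 \ge \tfrac1{16}(1-\sqrt{10}/20)\sigma_r(\widetilde{\mtx{M}}_\tau)$, so it suffices to verify the numerical inequality
\begin{align*}
  \frac{(10/9)(3/20)^2}{2(\sqrt{2}-1)} \le \frac{(1-\sqrt{10}/20)^2}{16},
\end{align*}
whose left side is $\approx 0.030$ and whose right side is $\approx 0.044$; hence $\dist(\mtx{U}_0,\mtx{X})\le\tfrac14\sigma_r(\mtx{X})$.

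\emph{Main obstacle.} The only non-routine ingredient is the Procrustes perturbation inequality bounding $\dist(\mtx{U}_0,\mtx{X})^2$ by $\fronorm{\mtx{U}_0\mtx{U}_0^\T-\XXT}^2$ up to the factor $\tfrac{1}{2(\sqrt2-1)\sigma_r(\mtx{X})^2}$; everything else is RIP, Weyl, and arithmetic. A secondary thing to watch is that the slack among the constants $3/20$ (threshold), $1/4$ (radius), and $1/10$ (RIP) is genuinely enough — the numerical check in Step 3 confirms it with room to spare. It is worth emphasizing that the Weyl step in Step 2 is precisely what makes the condition $e_\tau \le \tfrac{3}{20}\sigma_r(\widetilde{\mtx{M}}_\tau)$ self-certifying: one never assumes a priori that $\widetilde{\mtx{M}}_\tau$ is close to $\XXT$; the smallness of $e_\tau$ relative to $\sigma_r(\widetilde{\mtx{M}}_\tau)$ forces both the Frobenius closeness and the lower bound on $\sigma_r(\mtx{X})$.
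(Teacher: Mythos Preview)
Your proof is correct and follows essentially the same route as the paper's: RIP to bound $\fronorm{\widetilde{\mtx{M}}_\tau-\XXT}$ by $\sqrt{10/9}\,e_\tau$, Weyl's inequality to get $\sigma_r^2(\mtx{X})\ge \sigma_r(\widetilde{\mtx{M}}_\tau)-\sqrt{10/9}\,e_\tau$, the Procrustes perturbation bound (Lemma~\ref{lemma:dist:upperbound}) to control $\dist(\mtx{U}_0,\mtx{X})$, and a numerical check. The paper organizes the final arithmetic a bit differently (it replaces $1/\sqrt{2(\sqrt2-1)}$ by the looser $\sqrt{3/2}$ and chains the inequalities so that the product telescopes to exactly $\tfrac14\sigma_r(\mtx{X})$), but the ingredients and logic are identical.
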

One might consider using solely the projected gradient updates (i.e.
set $\tinit=\infty$) as in previous approaches \cite{OMP2, garg2009gradient,
CoSamp, needell2009uniform, blumensath2009, meka09, cai2010singular}. We note
that the projected gradient updates in the initialization phase require
computing the first $r$ singular vectors of a matrix whereas the gradient
updates do not require any singular vector computations. Such singular
computations may be prohibitive compared to the gradient updates, especially when
$n_1$ or $n_2$ is large and for ensembles where matrix-vector multiplication is fast.  We
would like to emphasize, however, that for small $n_1, n_2$ and dense matrices using
projected gradient updates may be more efficient.
Our scheme is a natural interpolation: one could only do projected gradient
steps, or one could do one projected gradient step.  Here we argue that very
few projected gradients provide sufficient initialization such that gradient
descent converges geometrically.

\section{Related work}
\label{sec:related}

There is a vast literature dedicated to low-rank matrix recovery/sensing and
semidefinite programming. We shall only focus on the papers most related to our
framework.

Recht, Fazel, and Parrilo were the first to study low-rank solutions of linear
matrix equations under RIP assumptions~\cite{recht10}. They showed that if the
rank-$r$ RIP constant of $\mathcal{A}$ is less than a fixed numerical constant,
then the matrix with minimum trace satisfying the equality constraints
coincided with the minimum rank solution. In particular, for the Gaussian
ensemble the required number of measurements is
$\BigOmega(nr)$~\cite{CandesPlanTight}.  Subsequently, a series of papers
\cite{candes2009exact, gross2011recovering, recht2011simpler, candes2014phase}
showed that trace minimization and related convex optimization approaches also
work for other measurement ensembles such as those arising in matrix completion
and related problems. In this paper we have established a similar result to
\cite{recht10}. We require the same order of
measurements $\BigOmega(nr)$ but use a more computationally friendly local
search algorithm. Also related to this work are projection gradient schemes
with hard thresholding \cite{OMP2, garg2009gradient,CoSamp,
needell2009uniform,blumensath2009, meka09, cai2010singular}.  Such algorithms
enjoy similar guarantees to that of \cite{recht10} and this work. Indeed, we
utilize such results in the initialization phase of our algorithm. However,
such algorithms require a rank-$r$ SVD in each iteration which may be expensive
for large problem sizes. We would like to emphasize, however, that for small
problem sizes and dense matrices (such as Gaussian ensembles) such algorithms
may be faster than gradient descent approaches such as ours.
%\RB{We just said this 2p ago}\MS{that's fine this is one of the cases that I think repetition is fine. However, we should not use the exact same words (probably my fault) that's not good.}

More recently, there has been a few results using non-convex optimization
schemes for matrix recovery problems. In particular, theoretical guarantees for
matrix completion have been established using manifold optimization
\cite{keshavan2010matrix} and alternating minimization
\cite{keshavan2012efficient} (albeit with the caveat of requiring a fresh set
of samples in each iteration).  See also \cite{hardt14a,sun14}.
Later on, Jain et.al.~\cite{jain12} analyzed the performance of alternating
minimization under similar modeling assumptions to \cite{recht10} and this
paper. However, the requirements on the RIP constant in \cite{jain12} are more
stringent compared to \cite{recht10} and ours. In particular, the authors
require $\delta_{4r}\le c/r$ whereas we only require $\delta_{6r}\le c$.
Specialized to the Gaussian model, the results of \cite{jain12} require
$\BigOmega(nr^3\kappa^2)$ measurements.\footnote{The authors also propose a
stage-wise algorithm with improved sample complexity of
$\BigOmega(nr^3\tilde{\kappa}^2)$ where $\tilde{\kappa}$ is a local condition
number defined as the ratio of the maximum ratio of two successive eigenvalues.
We note, however, that in general $\tilde{\kappa}$ can be as large as
$\kappa$.} 

Our algorithm and analysis are inspired by the recent paper \cite{candes14} by
Candes, Li and Soltanolkotabi. See also \cite{soltanolkotabi2014algorithms,
cai2015optimal} for some stability results.  In \cite{candes14} the authors
introduced a local regularity condition to analyze the convergence of a
gradient descent-like scheme for phase retrieval.  We use a similar regularity
condition but generalize it to ranks higher than one.  Recently, independent of
our work, Zheng and Lafferty \cite{zheng15} provided an analysis of gradient
descent using (\ref{eq:theproblem}) via the same regularity condition.  Zheng
and Lafferty focus on the Gaussian ensemble, and establish a sample complexity
of $m = \BigOmega( n r^3 \kappa^2 \log n)$. In
comparison we only require $\BigOmega(nr)$ measurements removing both the
dependence on $\kappa$ in the sample complexity and improving the asymptotic
rate. We would like to emphasize that the improvement in our result is not just
due to the more sophisticated initialization scheme. In particular, Zheng and
Lafferty show geometric convergence starting from any
initial solution obeying dist$\left(\mtx{U}_0,\mtx{X}\right)\le
c\cdot \sigma_r(\mtx{X})$ as long as the number of measurements obeys
$m=\BigOmega(nr\kappa^2\log n)$. In contrast, we establish geometric
convergence starting from the same neighborhood of $\mtx{U}_0$ with only
$\BigOmega(nr)$ measurements.  Our results also differs in terms of the convergence rate. We establish a convergence rate of the form $1-\frac{\mu}{\kappa}$ whereas \cite{zheng15} establishes a slower convergence rate of the form $1-\frac{\mu}{nr^2\kappa^2}$. Moreover, the theory of restricted isometries in our work considerably simplifies the analysis.

Finally, we would also like to mention \cite{desa14} for guarantees using
stochastic gradient algorithms. The results of \cite{desa14} are applicable to a
variety of models; focusing on the Gaussian ensemble, the authors require
$\BigOmega\left((nr\log n)/\epsilon\right)$ samples to reach a relative
error of $\epsilon$. In contrast, our sample complexity is independent of the
desired relative error $\epsilon$. However, their algorithm only requires
a random initialization.

%\MS{Below is my attempt at comparison with future publications}
%
Since the first version of this paper appeared on arXiv, a few recent papers
have also studied low-rank recovery from RIP measurements via Procrustes Flow
type schemes  \cite{bhojanapalli15,zhao15,chen15}. We would like to point out
that the results presented in these papers are suboptimal compared to ours. For
example,  by utilizing some of the results of the previous version of this
paper, \cite{bhojanapalli15} provides a similar convergence rate to ours.
However, this convergence occurs in a smaller radius around the planted
solution so that the required number of measurements is significantly higher.
Furthermore, the results of \cite{bhojanapalli15} only apply when the matrix is
PSD and do not work for general rectangular. Similarly,
result in \cite{chen15} holds only for PSD matrices, and the convergence rate
has a high-degree polynomial dependence on condition number. The algorithm from
\cite{chen15} does generalize to rectangular matrices, but the sample
complexity is of the order of $\mathcal{O}(nr^3 \log n)$ rather than the
complexity $\mathcal{O}(nr)$ we establish here. Moreover, our analysis of both
the PSD and rectangular cases is far more concise.

\section{Proofs}
\label{sec:proofs}

We first prove our results for the symmetric PSD case (Theorem \ref{mainthm}). However, whenever possible we will
state lemmas in the more general setting. The changes required for the proof of the general setting (Theorem \ref{mainthm_general}) is deferred to Section \ref{secgeneral}.

Recall in this setting that we assume a fixed symmetric PSD $\mtx{M} \in \R^{n \times n}$ of rank $r$,
which admits a factorization $\mtx{M} = \XXT$ for $\mtx{X} \in \R^{n \times r}$.
Before we dive into the details of the proofs, we would like to mention that we will prove our results using the update
\begin{align}
\label{act}
\mtx{U}_{\tau+1}=\mtx{U}_\tau-\frac{\mu}{\opnorm{\mtx{X}}^2}\nabla f(\mtx{U}_\tau),
\end{align}
in lieu of the PF update
\begin{align}
\label{sortof}
\mtx{U}_{\tau+1}=\mtx{U}_\tau-\frac{\mu_{\mathrm{PF}}}{\opnorm{\mtx{U}_0}^2}\nabla f(\mtx{U}_\tau).
\end{align}
As we prove in Section~\ref{sec:proofs:initialization}, our initial
solution obeys $\dist(\mtx{U}_0, \mtx{X}) \leq
\sigma_r(\mtx{X})/4$. Hence, applying triangle inequality we can can conclude that
\begin{align}
  \norm{\mtx{U}_0}^2 \leq \frac{25}{16} \norm{\mtx{X}}^2, \label{eq:scaleone}
\end{align}
and similarly,
\begin{align}
  \norm{\mtx{U}_0}^2 \geq \frac{9}{16} \norm{\mtx{X}}^2 \:. \label{eq:scaletwo}
\end{align}
Thus, any result proven for the update \eqref{act} will automatically carry
over to the PF update with a simple rescaling of the upper bound on the step
size via \eqref{eq:scaleone}. Furthermore, we can upper bound the convergence
rate of gradient descent using the PF update in terms of properties of
$\mtx{X}$ instead of $\mtx{U}_0$ via \eqref{eq:scaletwo}.

\subsection{Preliminaries}
We start with a well known characterization of RIP.
\begin{lemma}
\label{lemma:rip:innerproducts}
\cite{candes08} Let $\A$ satisfy $2r$-RIP with constant $\delta_{2r}$. Then,
for all matrices $\mtx{X}, \mtx{Y}$ of rank at most $r$, we have
\beqs
    \abs{\ip{\A(\mtx{X})}{\A(\mtx{Y})} - \ip{\mtx{X}}{\mtx{Y}}} \leq \delta_{2r} \fronorm{\mtx{X}} \fronorm{\mtx{Y}} \:.
\eeqs
\end{lemma}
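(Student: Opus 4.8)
The plan is to prove this by the standard polarization argument, reducing to the two-sided inequality that defines RIP, applied to matrices of rank at most $2r$.

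First I would dispose of the degenerate case: if $\mtx{X} = \mtx{0}$ or $\mtx{Y} = \mtx{0}$, then by linearity $\A(\mtx{0}) = \mtx{0}$ and both sides of the claimed bound vanish. So assume $\mtx{X}, \mtx{Y} \neq \mtx{0}$. Since $\ip{\A(\cdot)}{\A(\cdot)}$ and $\ip{\cdot}{\cdot}$ are both bilinear, dividing through by $\fronorm{\mtx{X}}\fronorm{\mtx{Y}}$ reduces the problem to the normalized case $\fronorm{\mtx{X}} = \fronorm{\mtx{Y}} = 1$, where it suffices to show $\abs{\ip{\A(\mtx{X})}{\A(\mtx{Y})} - \ip{\mtx{X}}{\mtx{Y}}} \le \delta_{2r}$.

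Next comes the key identity. By polarization,
\[
\ip{\A(\mtx{X})}{\A(\mtx{Y})} = \tfrac14\left(\twonorm{\A(\mtx{X}+\mtx{Y})}^2 - \twonorm{\A(\mtx{X}-\mtx{Y})}^2\right), \qquad \ip{\mtx{X}}{\mtx{Y}} = \tfrac14\left(\fronorm{\mtx{X}+\mtx{Y}}^2 - \fronorm{\mtx{X}-\mtx{Y}}^2\right).
\]
Because $\mtx{X}$ and $\mtx{Y}$ each have rank at most $r$, the matrices $\mtx{X} \pm \mtx{Y}$ have rank at most $2r$; this is precisely why the hypothesis is $2r$-RIP rather than $r$-RIP. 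Applying the defining RIP inequality to $\mtx{X}+\mtx{Y}$ and to $\mtx{X}-\mtx{Y}$ gives $\abs{\twonorm{\A(\mtx{X}\pm\mtx{Y})}^2 - \fronorm{\mtx{X}\pm\mtx{Y}}^2} \le \delta_{2r}\fronorm{\mtx{X}\pm\mtx{Y}}^2$.

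Finally I would subtract the two polarization identities and apply the triangle inequality term by term, then invoke the parallelogram law:
\[
\abs{\ip{\A(\mtx{X})}{\A(\mtx{Y})} - \ip{\mtx{X}}{\mtx{Y}}} \le \tfrac14\left(\delta_{2r}\fronorm{\mtx{X}+\mtx{Y}}^2 + \delta_{2r}\fronorm{\mtx{X}-\mtx{Y}}^2\right) = \tfrac{\delta_{2r}}{4}\cdot 2\left(\fronorm{\mtx{X}}^2 + \fronorm{\mtx{Y}}^2\right) = \delta_{2r},
\]
using $\fronorm{\mtx{X}} = \fronorm{\mtx{Y}} = 1$ in the last step, and undoing the normalization recovers the stated bound $\delta_{2r}\fronorm{\mtx{X}}\fronorm{\mtx{Y}}$. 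There is no substantive obstacle here — this is a textbook computation; the only points needing a moment's care are the degenerate zero case, the observation that $\mtx{X}\pm\mtx{Y}$ has rank at most $2r$ (so the $2r$-RIP constant is the right one to invoke), and the fact that it is the equal-norm normalization $\fronorm{\mtx{X}} = \fronorm{\mtx{Y}}$ that lets the arithmetic-mean bound $\tfrac{\delta_{2r}}{2}(\fronorm{\mtx{X}}^2+\fronorm{\mtx{Y}}^2)$ collapse to the geometric-mean form $\delta_{2r}\fronorm{\mtx{X}}\fronorm{\mtx{Y}}$.
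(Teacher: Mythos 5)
Your proof is correct: the polarization identity combined with the parallelogram law and the rank bound $\rank(\mtx{X}\pm\mtx{Y})\le 2r$ is exactly the standard argument, and the normalization step is handled properly. The paper itself does not prove this lemma but cites it (to Cand\`es' RIP note), and your argument is precisely the proof given in that cited source, so there is nothing to add.
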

Next, we state a recent result which characterizes the convergence rate of projected gradient descent onto general non-convex sets specialized to our problem. See
\cite{meka09} for related results using singular value hard thresholding. Throughout, $\mathcal{P}_r(\mtx{M})$ denotes projection onto rank-$r$ matrices. For a symmetric PSD matrix $\mtx{M}\in\R^{n\times n}$ denotes projection onto the rank-$r$ PSD matrices and for a rectangular matrix $\mtx{M} \in \R^{n_1\times n_2}$ it denotes projection onto rank-$r$ matrices.
\begin{lemma}
\label{lemma:hardthresholding}
\cite{oymak15}
Let $\mtx{M} \in \R^{n_1 \times n_2}$ be an arbitrary matrix of rank $r$.
Also let $\vct{b}=\A(\mtx{M})\in\R^m$ be $m$ linear measurements.
Consider the iterative updates
\beqs
    \mtx{Z}_{\tau+1} \gets \mathcal{P}_{r}\left(\mtx{Z}_\tau - \frac{1}{m} \sum_{k=1}^{m} (\ip{\mtx{A}_k}{\mtx{Z}_\tau} - b_k) \mtx{A}_k\right) \:.
\eeqs
Then
\beqs
    \fronorm{ \mtx{Z}_\tau - \mtx{M} } \leq \rho(\A)^{\tau} \fronorm{ \mtx{Z}_0 - \mtx{M} },
\eeqs
holds. Here, $\rho(\A)$ is defined as
\beqs
    \rho(\A) := 2 \sup_{\substack{ \fronorm{\mtx{X}} = 1, \rank(\mtx{X}) \leq 2r, \\ \fronorm{\mtx{Y}}=1, \rank(\mtx{Y}) \leq 2r }} \abs{\ip{\A(\mtx{X})}{\A(\mtx{Y})} - \ip{\mtx{X}}{\mtx{Y}}} \:.
\eeqs
\end{lemma}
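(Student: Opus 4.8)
The statement is the standard convergence analysis of singular value projection (iterative hard thresholding), and the plan is to reproduce that short argument, reducing everything to the near-isometry of $\A$ on rank-$2r$ matrices that is quantified by $\rho(\A)$. Write $\mtx{\Delta}_\tau := \mtx{Z}_\tau - \mtx{M}$ for the error at step $\tau$, and let $\B$ denote the self-adjoint operator $\mtx{W} \mapsto \frac{1}{m}\sum_{k=1}^{m}\ip{\mtx{A}_k}{\mtx{W}}\mtx{A}_k$ appearing inside the update. Since $b_k = \ip{\mtx{A}_k}{\mtx{M}}$, one iteration reads $\mtx{Z}_{\tau+1} = \mathcal{P}_r(\mtx{G}_\tau)$ with $\mtx{G}_\tau := \mtx{Z}_\tau - \B(\mtx{\Delta}_\tau)$, so that $\mtx{G}_\tau - \mtx{M} = (\mathcal{I}-\B)(\mtx{\Delta}_\tau)$. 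The first thing I would record is that $\rho(\A)$ is precisely (twice) the operator norm of $\mathcal{I}-\B$ restricted to the rank-$2r$ cone: for all $\mtx{X},\mtx{Y}$ of rank at most $2r$, homogeneity and the definition of $\rho(\A)$ give
\[
\abs{\ip{\mtx{X}}{(\mathcal{I}-\B)\mtx{Y}}} \;=\; \abs{\ip{\mtx{X}}{\mtx{Y}} - \ip{\A(\mtx{X})}{\A(\mtx{Y})}} \;\le\; \tfrac{1}{2}\,\rho(\A)\,\fronorm{\mtx{X}}\,\fronorm{\mtx{Y}} \:.
\]

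Next I would invoke the defining optimality of $\mathcal{P}_r$. Because $\mtx{M}$ has rank $r$ -- and is PSD in the symmetric case, so it lies in whichever set $\mathcal{P}_r$ projects onto -- $\mtx{M}$ is an admissible competitor for $\mtx{Z}_{\tau+1}$, hence $\fronorm{\mtx{Z}_{\tau+1} - \mtx{G}_\tau} \le \fronorm{\mtx{M} - \mtx{G}_\tau}$. Writing $\mtx{Z}_{\tau+1} - \mtx{G}_\tau = (\mtx{Z}_{\tau+1} - \mtx{M}) + (\mtx{M} - \mtx{G}_\tau)$, squaring, and cancelling the common term $\fronorm{\mtx{M} - \mtx{G}_\tau}^2$ yields
\[
\fronorm{\mtx{Z}_{\tau+1} - \mtx{M}}^2 \;\le\; 2\,\ip{\mtx{Z}_{\tau+1} - \mtx{M}}{\,\mtx{G}_\tau - \mtx{M}\,} \;=\; 2\,\ip{\mtx{Z}_{\tau+1} - \mtx{M}}{(\mathcal{I}-\B)\mtx{\Delta}_\tau} \:.
\]
Here $\mtx{Z}_{\tau+1} - \mtx{M}$ has rank at most $2r$ (a difference of two rank-$\le r$ matrices), and $\mtx{\Delta}_\tau$ has rank at most $2r$ as well, since $\mtx{Z}_\tau$ is either $\mtx{0}$ (when $\tau=0$) or an output of $\mathcal{P}_r$ and hence has rank at most $r$. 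Plugging in the estimate from the first paragraph,
\[
\fronorm{\mtx{Z}_{\tau+1} - \mtx{M}}^2 \;\le\; \rho(\A)\,\fronorm{\mtx{Z}_{\tau+1} - \mtx{M}}\,\fronorm{\mtx{Z}_\tau - \mtx{M}} \:,
\]
and dividing through by $\fronorm{\mtx{Z}_{\tau+1} - \mtx{M}}$ (the conclusion being immediate when this quantity is zero, since then every later iterate equals $\mtx{M}$) gives the one-step contraction $\fronorm{\mtx{Z}_{\tau+1} - \mtx{M}} \le \rho(\A)\,\fronorm{\mtx{Z}_\tau - \mtx{M}}$. Chaining this inequality from $\tau = 0$ produces the stated geometric bound.

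There is no genuinely hard step here -- the whole argument is a few lines once the ingredients are arranged -- and the only things that need care are bookkeeping: (i) matching the normalization conventions so that $\rho(\A)$ really is the restricted operator norm of $\mathcal{I}-\B$ asserted in the first display; (ii) staying within the rank budget, i.e.\ verifying that both error matrices $\mtx{Z}_{\tau+1}-\mtx{M}$ and $\mtx{\Delta}_\tau$ lie in the rank-$2r$ cone on which that near-isometry estimate is valid; and (iii) correctly invoking the optimality of $\mathcal{P}_r$ as a projection onto the \emph{nonconvex} set of rank-$r$ (respectively rank-$r$ PSD) matrices, which applies precisely because $\mtx{M}$ itself belongs to that set.
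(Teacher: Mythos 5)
The paper does not actually prove this lemma---it is quoted from \cite{oymak15} and used as a black box in the initialization analysis---so there is no in-paper argument to compare against; what you wrote is the standard proof of the quoted result, and it is correct. Your three ingredients are the right ones: $\mtx{M}$ is an admissible competitor for the (nonconvex) projection $\mathcal{P}_r$, since it has rank $r$ and is PSD in the symmetric variant; expanding the resulting optimality inequality and cancelling $\fronorm{\mtx{M}-\mtx{G}_\tau}^2$ leaves $\fronorm{\mtx{Z}_{\tau+1}-\mtx{M}}^2 \le 2\ip{\mtx{Z}_{\tau+1}-\mtx{M}}{(\mathcal{I}-\A^*\A)(\mtx{Z}_\tau-\mtx{M})}$; and both error matrices have rank at most $2r$, so the restricted deviation-from-isometry quantity $\tfrac{1}{2}\rho(\A)$ applies and gives the one-step contraction, which you then chain. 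Two bookkeeping remarks, the first of which you already flag yourself: (i) taken literally, the update's step size $1/m$ does not match the definition of $\rho(\A)$, which is written for the unnormalized inner product $\ip{\A(\mtx{X})}{\A(\mtx{Y})}$; this factor-of-$m$ tension is inherited from the paper's own conventions (RIP is asserted for $\A$ itself while $\alpha_\tau=1/m$), and your first display is the correct statement once step size and RIP normalization are made consistent, i.e.\ once the gradient-step operator is $\A^*\A$ for the same map $\A$ that satisfies RIP. (ii) The per-step bound needs $\rank(\mtx{Z}_\tau-\mtx{M})\le 2r$ at every step, hence $\rank(\mtx{Z}_0)\le r$; the lemma statement leaves $\mtx{Z}_0$ unspecified, but in the paper's application $\mtx{Z}_0=\mtx{0}$, exactly as you assume.
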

We shall make repeated use of the following lemma which upper bounds $\fronorm{
\UUTMinusXXT }$ by some factor of $\dist(\mtx{U}, \mtx{X})$
\COND{.}{, which is immediate from two applications of the triangle inequality.}
\begin{lemma}\label{simplem} For any $\mtx{U}\in\R^{n\times r}$ obeying $\dist(\mtx{U},\mtx{X}) \le \frac{1}{4}\opnorm{\mtx{X}}$, we have
\begin{align*}
 \fronorm{\UUTMinusXXT} \le\frac{9}{4} \opnorm{\mtx{X}}\dist(\mtx{U}, \mtx{X}).
\end{align*}
\end{lemma}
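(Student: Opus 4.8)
The plan is to fix an optimal rotation, perform a one-step telescoping split of $\UUTMinusXXT$, and then apply submultiplicativity of the Frobenius norm together with the triangle inequality.

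First I would let $\mtx{R}\in\R^{r\times r}$ with $\mtx{R}^\T\mtx{R}=\mtx{I}_r$ be a minimizer in \eqref{procrustesprob}, so that $\fronorm{\mtx{U}-\mtx{XR}}=\dist(\mtx{U},\mtx{X})$. Since $(\mtx{XR})(\mtx{XR})^\T=\XXT$ and $\opnorm{\mtx{XR}}=\opnorm{\mtx{X}}$, both sides of the claimed inequality are unchanged if we replace $\mtx{X}$ by $\mtx{XR}$; hence we may assume without loss of generality that $\fronorm{\mtx{U}-\mtx{X}}=\dist(\mtx{U},\mtx{X})$.

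Next I would use the identity $\UUTMinusXXT=\mtx{U}(\mtx{U}-\mtx{X})^\T+(\mtx{U}-\mtx{X})\mtx{X}^\T$. Applying the triangle inequality followed by the bounds $\fronorm{\mtx{A}\mtx{B}}\le\opnorm{\mtx{A}}\fronorm{\mtx{B}}$ on the first term and $\fronorm{\mtx{A}\mtx{B}}\le\fronorm{\mtx{A}}\opnorm{\mtx{B}}$ on the second yields $\fronorm{\UUTMinusXXT}\le(\opnorm{\mtx{U}}+\opnorm{\mtx{X}})\fronorm{\mtx{U}-\mtx{X}}$. To finish, I would control $\opnorm{\mtx{U}}$ by the triangle inequality and the hypothesis: $\opnorm{\mtx{U}}\le\opnorm{\mtx{X}}+\opnorm{\mtx{U}-\mtx{X}}\le\opnorm{\mtx{X}}+\fronorm{\mtx{U}-\mtx{X}}\le\frac{5}{4}\opnorm{\mtx{X}}$, so that $\opnorm{\mtx{U}}+\opnorm{\mtx{X}}\le\frac{9}{4}\opnorm{\mtx{X}}$. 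Substituting this and $\fronorm{\mtx{U}-\mtx{X}}=\dist(\mtx{U},\mtx{X})$ into the previous bound gives the claim.

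There is no real obstacle here — the statement is elementary — but the one place to be slightly careful is to invoke the correct one-sided form of submultiplicativity for each of the two terms in the split, and to use $\opnorm{\cdot}\le\fronorm{\cdot}$ when passing from the distance bound on $\mtx{U}-\mtx{X}$ to the operator-norm bound on $\mtx{U}$.
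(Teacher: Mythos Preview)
Your proof is correct and matches the paper's approach essentially line for line: the same telescoping split $\UUTMinusXXT=\mtx{U}(\UMinusXR)^\T+(\UMinusXR)(\mtx{XR})^\T$, the same submultiplicativity/triangle-inequality bound $(\opnorm{\mtx{U}}+\opnorm{\mtx{X}})\fronorm{\UMinusXR}$, and the same use of $\opnorm{\mtx{U}}\le\frac{5}{4}\opnorm{\mtx{X}}$. The only cosmetic difference is that you first reduce to $\mtx{R}=\mtx{I}$, whereas the paper carries $\mtx{R}$ through the computation.
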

\COND{
\begin{proof}
\begin{align*}
  \fronorm{\UUTMinusXXT} &= \fronorm{ \mtx{U}(\UMinusXR)^\T + (\UMinusXR)(\mtx{XR})^T } \\
                         &\leq (\opnorm{\mtx{U}} + \opnorm{\mtx{X}}) \fronorm{\UMinusXR} \\
                         &\leq \frac{9}{4} \opnorm{\mtx{X}}\fronorm{\UMinusXR} \:.
\end{align*}
\end{proof}
}{}
Finally, we also need the following lemma which upper bounds $\dist(\mtx{U},
\mtx{X})$ by some factor of $\fronorm{ \UUTMinusXXT }$. We defer the proof of this
result to Appendix \ref{proofstdlem}.
\begin{lemma}
\label{lemma:dist:upperbound}
For any $\mtx{U}, \mtx{X} \in \R^{n \times r}$, we have
\beqs
    \dist^2(\mtx{U}, \mtx{X}) \leq \frac{1}{2(\sqrt{2}-1) \sigma_r^2(\mtx{X})} \fronorm{ \UUTMinusXXT }^2 \:.
\eeqs
\end{lemma}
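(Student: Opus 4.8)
The plan is to reduce the bound to a statement about singular values via a clean choice of orthogonal matrix. First I would invoke the known closed form for the orthogonal Procrustes problem: if $\mtx{X}^\T\mtx{U} = \mtx{A}\mtx{\Sigma}\mtx{B}^\T$ is the SVD, then the optimal rotation is $\mtx{R} = \mtx{A}\mtx{B}^\T$, and a direct computation gives $\dist^2(\mtx{U},\mtx{X}) = \fronorm{\mtx{U}}^2 + \fronorm{\mtx{X}}^2 - 2\Tr(\mtx{\Sigma})$. The goal is then to control $\fronorm{\mtx{U}}^2 + \fronorm{\mtx{X}}^2 - 2\Tr(\mtx{\Sigma})$ by $\fronorm{\UUTMinusXXT}^2 / \sigma_r^2(\mtx{X})$ up to the stated constant.

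The key algebraic identity is to expand the right-hand side. Writing everything in terms of the Gram-type quantities,
\begin{align*}
\fronorm{\UUT - \XXT}^2 = \fronorm{\UUT}^2 + \fronorm{\XXT}^2 - 2\fronorm{\mtx{U}^\T\mtx{X}}^2 = \Tr((\mtx{U}^\T\mtx{U})^2) + \Tr((\mtx{X}^\T\mtx{X})^2) - 2\fronorm{\mtx{U}^\T\mtx{X}}^2,
\end{align*}
and $\fronorm{\mtx{U}^\T\mtx{X}}^2 = \Tr(\mtx{\Sigma}^2)$ since the singular values of $\mtx{U}^\T\mtx{X}$ equal those of $\mtx{X}^\T\mtx{U}$. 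So I want to show
\begin{align*}
\fronorm{\mtx{U}}^2 + \fronorm{\mtx{X}}^2 - 2\Tr(\mtx{\Sigma}) \;\le\; \frac{1}{2(\sqrt 2 - 1)\,\sigma_r^2(\mtx{X})}\Big(\Tr((\mtx{U}^\T\mtx{U})^2) + \Tr((\mtx{X}^\T\mtx{X})^2) - 2\Tr(\mtx{\Sigma}^2)\Big).
\end{align*}
I would try to prove this by a ``matching'' argument: construct a single auxiliary matrix (e.g.\ interpolating between the Gram matrices of $\mtx{U}$ and $\mtx{XR}$, or using that $\mtx{U}^\T\mtx{U} + \mtx{X}^\T\mtx{X} - 2\mtx{\Sigma}$ is PSD when $\mtx{R}$ is the optimal rotation) and relate the numerator and denominator entrywise or eigenvalue-wise. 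A convenient route: let $\mtx{\Delta} = \mtx{U}^\T\mtx{U} - \mtx{\Sigma}$ and $\mtx{\Gamma} = \mtx{X}^\T\mtx{X} - \mtx{\Sigma}$; since $\mtx{R}$ is optimal, $\mtx{U}^\T\mtx{XR} = (\mtx{XR})^\T\mtx{U}$ is symmetric PSD equal to $\mtx{\Sigma}$ in an appropriate basis, which forces both $\mtx{\Delta},\mtx{\Gamma}\succeq 0$ (this is the first-order optimality condition of the Procrustes problem). Then the numerator is $\Tr(\mtx{\Delta}) + \Tr(\mtx{\Gamma})$ and the bracket on the right expands to a sum of terms each of which I can lower-bound by $2(\sqrt 2-1)\sigma_r^2(\mtx{X})$ times the corresponding piece — this is where the strange constant $2(\sqrt 2 - 1)$ enters, presumably from optimizing a scalar inequality of the form $a^2 + b^2 - 2ab' \ge c(a+b-2\sqrt{ab'})$ over the relevant ranges.

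The main obstacle I anticipate is the non-commutativity: $\mtx{U}^\T\mtx{U}$, $\mtx{X}^\T\mtx{X}$, and $\mtx{\Sigma}$ need not be simultaneously diagonalizable, so I cannot immediately reduce to a scalar inequality coordinatewise. The trick will be to exploit that the optimal $\mtx{R}$ makes $\mtx{X}^\T\mtx{U}\mtx{R}$ symmetric PSD (hence diagonalizable in an orthonormal basis), reduce all the traces to sums over the singular values $\sigma_i(\mtx{X}^\T\mtx{U}\mtx{R})$ together with $\sigma_i(\mtx{U})$ and $\sigma_i(\mtx{X})$, and then push through a purely scalar optimization — being careful that the worst case respects $\sigma_r(\mtx{X}) \le \sigma_i(\mtx{X})$ so that the denominator $\sigma_r^2(\mtx{X})$ (rather than a larger $\sigma_i^2(\mtx{X})$) is legitimate. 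I would also double-check that no hypothesis like $\dist(\mtx{U},\mtx{X})$ small is needed; the lemma as stated is unconditional, so the scalar inequality must hold globally, which slightly constrains the achievable constant and explains why it is $2(\sqrt2-1)\approx 0.83$ rather than something cleaner.
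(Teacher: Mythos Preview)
Your proposal contains a concrete error that undermines the plan. The assertion that $\mtx{\Delta}=\mtx{U}^\T\mtx{U}-\mtx{\Sigma}\succeq 0$ and $\mtx{\Gamma}=\mtx{X}^\T\mtx{X}-\mtx{\Sigma}\succeq 0$ is false: take $r=1$, $\mtx{X}=(1,0)^\T$, $\mtx{U}=(2,0)^\T$, so that $\mtx{X}^\T\mtx{U}=2$, hence $\mtx{\Sigma}=2$, yet $\mtx{X}^\T\mtx{X}-\mtx{\Sigma}=-1<0$. The Procrustes first-order condition only gives that $\mtx{U}^\T\mtx{XR}$ is symmetric PSD; it does not force the Gram matrices $\mtx{U}^\T\mtx{U}$ or $\mtx{X}^\T\mtx{X}$ to dominate the singular-value matrix of $\mtx{X}^\T\mtx{U}$. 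With this sign control gone, the ``matching'' step has nothing to match, and the non-commutativity obstacle you already flag becomes fatal rather than merely technical: there is no obvious way to pair eigenvalues across three non-commuting $r\times r$ matrices.

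The paper sidesteps all of this by working with $\mtx{H}=\mtx{U}-\mtx{XR}$ directly (after rotating so that $\mtx{R}=\mtx{I}$, $\mtx{U}^\T\mtx{X}\succeq 0$, and $\mtx{H}^\T\mtx{X}$ is symmetric). One expands $\fronorm{\UUT-\XXT}^2-\eta\fronorm{\mtx{H}}^2$ as a trace polynomial in $\mtx{H}^\T\mtx{H}$, $\mtx{H}^\T\mtx{X}$, $\mtx{X}^\T\mtx{X}$, and then completes a square: the expression equals $\Tr\bigl((\mtx{H}^\T\mtx{H}+\sqrt{2}\,\mtx{H}^\T\mtx{X})^2\bigr)$ plus a remainder of the form $\Tr\bigl(\mtx{H}^\T\mtx{H}\,[(4-2\sqrt{2})\mtx{H}^\T\mtx{X}+2\mtx{X}^\T\mtx{X}-\eta\mtx{I}_r]\bigr)$. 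Since $\mtx{H}^\T\mtx{X}=\mtx{U}^\T\mtx{X}-\mtx{X}^\T\mtx{X}$ with $\mtx{U}^\T\mtx{X}\succeq 0$, the bracket is $\succeq 2(\sqrt{2}-1)\mtx{X}^\T\mtx{X}-\eta\mtx{I}_r$, which is PSD exactly when $\eta\le 2(\sqrt{2}-1)\sigma_r^2(\mtx{X})$. This is where the constant comes from, and no scalar reduction or simultaneous diagonalization is ever needed---the only PSD fact invoked is the genuine one, $\mtx{U}^\T\mtx{X}\succeq 0$.
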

We would like to point out that the dependence on $\sigma_r^2(\mtx{X})$ in the lemma above is unavoidable.

\subsection{Proof of convergence of gradient descent updates (Equation \eqref{convthm})}
We first outline the general proof strategy. See
Sections 2.3 and 7.9 of \cite{candes14}
for related arguments. We first will show that gradient
descent on an approximate estimate of the function $f$ converges. The approximate function we use is $F(\mtx{U}) := \frac{1}{4} \fronorm{\UUTMinusXXT}^2$. When the map $\A$ is random and isotropic in expectation, $F(\mtx{U})$ can be interpreted as the expected value of $f(\mtx{U})$, but
we stress that our result is a purely deterministic result. We demonstrate that $F(\mtx{U})$ exhibits geometric convergence in a small neighborhood around $\mtx{X}$.
The standard approach in optimization to show this is to prove that
the function exhibits strong convexity.  However, due to the rotational degrees of freedom for any optimal point, it is not possible for
$F(\mtx{U})$ to be strongly convex in any neighborhood around
$\mtx{X}$ except in the special case when $r=1$.
Thus, we rely on the approach used by \cite{candes14}, which establishes a sufficient
condition that only relies on first-order information along certain trajectories.
After showing the sufficient condition holds on $F(\mtx{U})$, we use standard RIP results
to show that this condition also holds for the function $f(\mtx{U})$.

To begin our analysis, we start with the following formulas for the gradient of $f(\mtx{U})$ and $F(\mtx{U})$
\COND{
\beqs
\nabla f(\mtx{U}) = \sum_{k=1}^{m} \ip{\mtx{A}_k}{\UUTMinusXXT} \mtx{A}_k \mtx{U} = \A^*\A(\UUTMinusXXT) \cdot \mtx{U}, \:\: \nabla F(\mtx{U}) = (\UUTMinusXXT) \mtx{U} \:.
\eeqs
}{
\begin{align*}
    \nabla f(\mtx{U}) &= \sum_{k=1}^{m} \ip{\mtx{A}_k}{\UUTMinusXXT} \mtx{A}_k \mtx{U} \\
                      &= \A^*\A(\UUTMinusXXT) \cdot \mtx{U} \:, \\
    \nabla F(\mtx{U}) &= (\UUTMinusXXT) \mtx{U} \:.
\end{align*}
}
Above, $\A^* : \R^{m} \rightarrow \R^{n \times n}$ is the adjoint operator of $\A$, i.e. $\A^*(z) = \sum_{i=1}^{m} \mtx{A}_k z_k$.
Throughout the proof $\mtx{R} $ is the solution to the orthogonal Procrustes problem. That is,
\begin{align*}
  \mtx{R}=\underset{\widetilde{\mtx{R}}\in\R^{n\times n}:\text{ }\widetilde{\mtx{R}}^\T\widetilde{\mtx{R}}=\mtx{I}_r}{\arg\min}\text{ }\fronorm{\mtx{U}-\mtx{X}\widetilde{\mtx{R}}},
\end{align*}
with the dependence on $\mtx{U}$ omitted for sake of exposition. The following definition defines a notion of strong convexity along
certain trajectories of the function.
\begin{definition}
\label{def:rc}
(Regularity condition, \cite{candes14})
Let $\mtx{X}\in\R^{n\times r}$ be a global optimum of a function $f$. Define the set $B(\delta)$ as
\begin{align*}
    B(\delta) := \{ \mtx{U} \in \R^{n \times r} : \dist(\mtx{U}, \mtx{X}) \leq \delta \} \:.
\end{align*}
The function $f$ satisfies a \emph{regularity condition}, denoted by $\RC(\alpha,
\beta, \delta)$, if for all matrices $\mtx{U} \in
B(\delta)$ the following inequality holds:
\begin{align*}
    \ip{\nabla f(\mtx{U})}{\UMinusXR} \geq \frac{1}{\alpha} \fronorm{\UMinusXR}^2 + \frac{1}{\beta} \fronorm{\nabla f(\mtx{U})}^2 \:.
\end{align*}
\end{definition}
If a function satisfies $\RC(\alpha,\beta,\delta)$, then as long as gradient
descent starts from a point $\mtx{U}_0 \in B(\delta)$, it will have a geometric rate of
convergence to the optimum $\mtx{X}$. This is formalized by the following lemma.
\begin{lemma}
\label{lemma:rc_implies_convergence}
\cite{candes14} If $f$ satisfies $\RC(\alpha,\beta,\delta)$ and $\mtx{U}_0 \in
B(\delta)$, then the gradient descent update
\begin{align*}
    \mtx{U}_{\tau+1} \gets \mtx{U}_\tau - \mu \nabla f(\mtx{U}_\tau),
\end{align*}
with step size $0 < \mu \leq 2/\beta$ obeys
$\mtx{U}_\tau \in B(\delta)$ and
\begin{align*}
    \dist^2(\mtx{U}_\tau, \mtx{X}) \leq \left(1-\frac{2\mu}{\alpha}\right)^{\tau} \dist^2(\mtx{U}_0, \mtx{X}) \:,
\end{align*}
for all $\tau \geq 0$.
\end{lemma}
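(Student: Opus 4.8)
The plan is a straightforward induction on $\tau$, using the standard ``one gradient step contracts the squared distance'' argument, but carried out with the Procrustes distance rather than an ordinary norm. The base case $\tau=0$ is exactly the hypothesis $\mtx{U}_0\in B(\delta)$ together with $(1-2\mu/\alpha)^0=1$. Before the induction, I would record one elementary consequence of $\RC(\alpha,\beta,\delta)$: applying Cauchy--Schwarz to $\ip{\nabla f(\mtx{U})}{\UMinusXR}$ and then AM--GM shows that $\frac{1}{\alpha}\fronorm{\UMinusXR}^2+\frac{1}{\beta}\fronorm{\nabla f(\mtx{U})}^2\le \fronorm{\UMinusXR}\fronorm{\nabla f(\mtx{U})}$ forces $\alpha\beta\ge 4$; hence $\mu\le 2/\beta\le \alpha/2$, so the contraction factor $1-\frac{2\mu}{\alpha}$ lies in $[0,1)$. (This is not strictly needed for the recursion, but it makes the claim $\mtx{U}_\tau\in B(\delta)$ transparent.)

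For the inductive step, assume $\mtx{U}_\tau\in B(\delta)$ and let $\mtx{R}$ denote the \emph{optimal} Procrustes rotation for $\mtx{U}_\tau$, so that $\dist(\mtx{U}_\tau,\mtx{X})=\fronorm{\mtx{U}_\tau-\mtx{XR}}$. Since $\mtx{R}$ is orthonormal it is a feasible (generally suboptimal) rotation for $\mtx{U}_{\tau+1}$, giving $\dist^2(\mtx{U}_{\tau+1},\mtx{X})\le\fronorm{\mtx{U}_{\tau+1}-\mtx{XR}}^2$. Expanding the update $\mtx{U}_{\tau+1}=\mtx{U}_\tau-\mu\nabla f(\mtx{U}_\tau)$ yields $\fronorm{\mtx{U}_{\tau+1}-\mtx{XR}}^2=\fronorm{\mtx{U}_\tau-\mtx{XR}}^2-2\mu\ip{\nabla f(\mtx{U}_\tau)}{\mtx{U}_\tau-\mtx{XR}}+\mu^2\fronorm{\nabla f(\mtx{U}_\tau)}^2$. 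Because $\mtx{U}_\tau\in B(\delta)$, the regularity condition applies with precisely this minimizing $\mtx{R}$, so $\ip{\nabla f(\mtx{U}_\tau)}{\mtx{U}_\tau-\mtx{XR}}\ge\frac{1}{\alpha}\fronorm{\mtx{U}_\tau-\mtx{XR}}^2+\frac{1}{\beta}\fronorm{\nabla f(\mtx{U}_\tau)}^2$. Substituting gives $\fronorm{\mtx{U}_{\tau+1}-\mtx{XR}}^2\le\bigl(1-\frac{2\mu}{\alpha}\bigr)\fronorm{\mtx{U}_\tau-\mtx{XR}}^2+\mu\bigl(\mu-\frac{2}{\beta}\bigr)\fronorm{\nabla f(\mtx{U}_\tau)}^2$, and the last term is nonpositive precisely because $0<\mu\le 2/\beta$. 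Hence $\dist^2(\mtx{U}_{\tau+1},\mtx{X})\le\bigl(1-\frac{2\mu}{\alpha}\bigr)\dist^2(\mtx{U}_\tau,\mtx{X})$; combining with the inductive hypothesis yields the geometric bound $\dist^2(\mtx{U}_{\tau+1},\mtx{X})\le\bigl(1-\frac{2\mu}{\alpha}\bigr)^{\tau+1}\dist^2(\mtx{U}_0,\mtx{X})$, which is at most $\dist^2(\mtx{U}_0,\mtx{X})\le\delta^2$, so $\mtx{U}_{\tau+1}\in B(\delta)$ and the induction closes.

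The only real subtlety—the ``main obstacle,'' such as it is—is that there is no single optimal rotation shared across iterates, so one must not try to track $\fronorm{\mtx{U}_\tau-\mtx{X}\mtx{R}_\star}$ for a fixed $\mtx{R}_\star$. The fix is exactly the step above: bound $\dist(\mtx{U}_{\tau+1},\mtx{X})$ by evaluating $\fronorm{\,\cdot\,-\mtx{XR}}$ at the rotation that was optimal for $\mtx{U}_\tau$. This loses only in the harmless (upper-bound) direction, and it is what lets us invoke $\RC(\alpha,\beta,\delta)$ at $\mtx{U}_\tau$ with that same $\mtx{R}$. Everything else is the routine expansion of a squared norm plus the sign of the step-size term.
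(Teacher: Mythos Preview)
Your proof is correct and is exactly the standard argument; the paper itself does not prove this lemma but simply cites it from \cite{candes14}. The one-step contraction via expanding $\fronorm{\mtx{U}_{\tau+1}-\mtx{XR}}^2$ with the Procrustes-optimal $\mtx{R}$ for $\mtx{U}_\tau$, applying the regularity inequality, and discarding the nonpositive $\mu(\mu-2/\beta)\fronorm{\nabla f(\mtx{U}_\tau)}^2$ term is precisely how this is done in \cite{candes14}, and your observation that $\alpha\beta\ge4$ (hence $1-2\mu/\alpha\in[0,1)$) is the right way to ensure the contraction factor is meaningful.
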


The proof is complete by showing that the regularity condition holds. To this
end, we first show in Lemma \ref{lemma:expected_f_rc_cond} below that the
function $F(\mtx{U})$ satisfies a slightly stronger variant of the
regularity condition from Definition~\ref{def:rc}. We then show in Lemma
\ref{lemma:concentration:gradient} that the gradient of $f$ is always close to
the gradient of $F$, and in Lemma \ref{lemma:concentration:gradientnorm} that
the gradient of $f$ is Lipschitz around the optimal value $\mtx{X}$.

\begin{lemma}
\label{lemma:expected_f_rc_cond}
Let $F(\mtx{U})=\frac{1}{4}\fronorm{\UUTMinusXXT}^2$. For all $\mtx{U}$ obeying
\begin{align*}
\opnorm{\mtx{U}-\mtx{X}\mtx{R}}\le \frac{1}{4}\sigma_r(\mtx{X}),
\end{align*}
we have
\begin{align}
\COND{
\ip{\nabla F(\mtx{U})&}{\UMinusXR} - \frac{1}{20}\left( \fronorm{\UUTMinusXXT}^2 + \fronorm{(\UMinusXR)\mtx{U}^\T}^2 \right) \nonumber \\
                           &\geq \frac{\sigma_r^2(\mtx{X})}{4} \fronorm{\UMinusXR}^2 + \frac{1}{5}\fronorm{\UUTMinusXXT}^2 \label{mahdilemeq}.
}{
  \ip{\nabla F(\mtx{U})&}{\UMinusXR} \geq \nonumber \\
                       &\frac{1}{20}\left( \fronorm{\UUTMinusXXT}^2 + \fronorm{(\UMinusXR)\mtx{U}^\T}^2 \right) \nonumber \\
                       &+ \frac{\sigma_r^2(\mtx{X})}{4} \fronorm{\UMinusXR}^2 + \frac{1}{5}\fronorm{\UUTMinusXXT}^2 \label{mahdilemeq}.
}
\end{align}
\end{lemma}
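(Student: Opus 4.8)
The plan is to reduce \eqref{mahdilemeq} to an elementary scalar inequality and close it using one structural fact — near-optimality of the Procrustes rotation forces a symmetry — together with the hypothesis $\opnorm{\UMinusXR}\le\frac14\sigma_r(\mtx{X})$, which makes the cubic perturbation negligible. First I would make the usual reduction to $\mtx{R}=\mtx{I}_r$: since $(\mtx{X}\mtx{R})(\mtx{X}\mtx{R})^\T=\XXT$ and $\sigma_r(\mtx{X}\mtx{R})=\sigma_r(\mtx{X})$, replacing $\mtx{X}$ by $\mtx{X}\mtx{R}$ alters neither $F$, $\nabla F$, nor any quantity in the statement, and makes the identity the optimal rotation for $\min_{\mtx{R}'}\fronorm{\mtx{U}-\mtx{X}\mtx{R}'}$. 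At that optimum $\mtx{X}^\T\mtx{U}=\mtx{B}\mtx{\Sigma}\mtx{B}^\T$ is symmetric, hence so is $\mtx{X}^\T\Delta$ with $\Delta:=\mtx{U}-\mtx{X}$; this symmetry is the only structural input needed. Combining $\nabla F(\mtx{U})=(\UUTMinusXXT)\mtx{U}$ with the identity $\mtx{U}\Delta^\T+\Delta\mtx{U}^\T=(\UUTMinusXXT)+\Delta\Delta^\T$ and the symmetry of $\UUTMinusXXT$, I would then write $\ip{\nabla F(\mtx{U})}{\Delta}=\frac12\fronorm{\UUTMinusXXT}^2+\frac12\ip{\UUTMinusXXT}{\Delta\Delta^\T}$.

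Next I would introduce the four scalars $a:=\fronorm{\mtx{X}\Delta^\T}^2$, $s:=\ip{\mtx{X}\Delta^\T}{\Delta\mtx{X}^\T}$, $p:=\ip{\mtx{X}\Delta^\T+\Delta\mtx{X}^\T}{\Delta\Delta^\T}$ and $q:=\fronorm{\Delta\Delta^\T}^2$. Expanding $\UUTMinusXXT=(\mtx{X}\Delta^\T+\Delta\mtx{X}^\T)+\Delta\Delta^\T$ gives $\fronorm{\UUTMinusXXT}^2=2a+2s+2p+q$ and $\fronorm{(\UMinusXR)\mtx{U}^\T}^2=\fronorm{\Delta\mtx{U}^\T}^2=a+p+q$, so (using $\frac1{20}+\frac15=\frac14$) a short computation shows \eqref{mahdilemeq} is equivalent to
\begin{align*}
\tfrac{9}{20}a+\tfrac12 s+\tfrac{19}{20}p+\tfrac{7}{10}q\;\ge\;\tfrac14\sigma_r^2(\mtx{X})\fronorm{\Delta}^2.
\end{align*}
Three elementary facts feed into this: (i) $s=\Tr\big((\mtx{X}^\T\Delta)^2\big)=\fronorm{\mtx{X}^\T\Delta}^2\ge0$, where the middle equality is exactly where the symmetry of $\mtx{X}^\T\Delta$ enters; (ii) $a=\ip{\mtx{X}^\T\mtx{X}}{\Delta^\T\Delta}\ge\sigma_r^2(\mtx{X})\Tr(\Delta^\T\Delta)=\sigma_r^2(\mtx{X})\fronorm{\Delta}^2$ since $\mtx{X}^\T\mtx{X}\succeq\sigma_r^2(\mtx{X})\mtx{I}_r$ and $\Delta^\T\Delta\succeq0$; and (iii) $q\le\opnorm{\Delta}^2\fronorm{\Delta}^2\le\frac1{16}\sigma_r^2(\mtx{X})\fronorm{\Delta}^2$ by hypothesis.

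For the cross term I would \emph{not} use the naive $|p|\le2\sqrt a\sqrt q$ (too lossy — it effectively reintroduces $\sigma_1(\mtx{X})$), but route it through $s$: by Cauchy--Schwarz $|p|=2\big|\ip{\mtx{X}^\T\Delta}{\Delta^\T\Delta}\big|\le2\fronorm{\mtx{X}^\T\Delta}\fronorm{\Delta^\T\Delta}=2\sqrt s\sqrt q$, using $\fronorm{\Delta^\T\Delta}=\fronorm{\Delta\Delta^\T}$. Applying AM--GM with the weight $\gamma=10/19$ chosen so the $s$-contribution of the cross term exactly cancels the $\tfrac12 s$ budget gives $\tfrac{19}{20}p\ge-\tfrac12 s-\tfrac{361}{200}q$; the inequality then collapses to $\tfrac{9}{20}a-\tfrac{221}{200}q\ge\tfrac14\sigma_r^2(\mtx{X})\fronorm{\Delta}^2$, and substituting (ii) and (iii) yields a lower bound of $\big(\tfrac{9}{20}-\tfrac{221}{3200}-\tfrac14\big)\sigma_r^2(\mtx{X})\fronorm{\Delta}^2=\tfrac{419}{3200}\sigma_r^2(\mtx{X})\fronorm{\Delta}^2\ge0$, which finishes the proof.

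The main obstacle is precisely the control of the cross term $p$: the first-guess estimates (Cauchy--Schwarz against $a$, or bounding $\fronorm{\Delta\Delta^\T}$ by $\fronorm{\UUTMinusXXT}$ through Lemma~\ref{lemma:dist:upperbound}) either inject a $\sigma_1(\mtx{X})$ factor or are numerically too weak to beat the $\tfrac14\sigma_r^2$ term; the fix is the Cauchy--Schwarz against $s$ together with the exact-matching AM--GM weight. The resulting margin $\tfrac{419}{3200}$ is consistent with the remark after Theorem~\ref{mainthm_general} that one can enlarge the contraction radius and rate at the price of a stricter RIP bound.
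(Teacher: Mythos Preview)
Your proof is correct and follows essentially the same route as the paper: after the $\mtx{R}=\mtx{I}$ reduction and the symmetry of $\mtx{X}^\T\Delta$, both arguments arrive at the identical scalar inequality $\tfrac{9}{20}a+\tfrac12 s+\tfrac{19}{20}p+\tfrac{7}{10}q\ge\tfrac14\sigma_r^2(\mtx{X})\fronorm{\Delta}^2$ (the paper writes it as a trace in $\mtx{H}^\T\mtx{H},\mtx{H}^\T\mtx{X},\mtx{X}^\T\mtx{X}$ with the same coefficients $c_1=\tfrac7{10},\,c_2=\tfrac{19}{10},\,c_3=\tfrac12,\,c_4=\tfrac9{20}$). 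Your Cauchy--Schwarz--plus--AM--GM step $\tfrac{19}{20}p\ge-\tfrac12 s-\tfrac{361}{200}q$ is precisely the paper's matrix completing-the-square $\Tr\big((\tfrac{c_2}{2\sqrt{c_3}}\mtx{H}^\T\mtx{H}+\sqrt{c_3}\mtx{H}^\T\mtx{X})^2\big)\ge0$, and the residual threshold $\opnorm{\Delta}^2\le\tfrac{40}{221}\sigma_r^2(\mtx{X})$ in the paper matches your $\tfrac{419}{3200}>0$ margin.
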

\begin{lemma}
\label{lemma:concentration:gradient}
Let $\mathcal{A}$ be a linear map obeying rank-4r RIP with constant
$\delta_{4r}$. For any $\mtx{H} \in \R^{n \times r}$ and any $\mtx{U} \in \R^{n
\times r}$ obeying $\dist(\mtx{U},\mtx{X})\le \frac{1}{4}
\opnorm{\mtx{X}}$, we have
\beqs
    \abs{ \ip{ \nabla F(\mtx{U}) - \nabla f(\mtx{U}) }{ \mtx{H} } } \leq \delta_{4r} \fronorm{\UUTMinusXXT} \fronorm{\mtx{H}\mtx{U}^\T} \:.
\eeqs
This immediately implies that for any $\mtx{U} \in \R^{n \times r}$ obeying $\dist(\mtx{U},\mtx{X})\le \frac{1}{4}\opnorm{\mtx{X}}$, we have
\beqs
    \fronorm{ \nabla f(\mtx{U}) - \nabla F(\mtx{U}) } \leq \delta_{4r} \fronorm{ \UUTMinusXXT } \opnorm{\mtx{U}} \:.
\eeqs
\end{lemma}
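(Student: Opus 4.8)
The plan is to exploit the explicit formulas for the two gradients derived just above the lemma statement, namely $\nabla f(\mtx{U}) = \A^*\A(\UUTMinusXXT)\cdot\mtx{U}$ and $\nabla F(\mtx{U}) = (\UUTMinusXXT)\mtx{U}$, and simply write the difference $\nabla F(\mtx{U}) - \nabla f(\mtx{U}) = \left[(\UUTMinusXXT) - \A^*\A(\UUTMinusXXT)\right]\mtx{U}$. Then for any test matrix $\mtx{H}$,
\[
\ip{\nabla F(\mtx{U}) - \nabla f(\mtx{U})}{\mtx{H}} = \ip{(\UUTMinusXXT) - \A^*\A(\UUTMinusXXT)}{\mtx{H}\mtx{U}^\T},
\]
using the adjoint identity $\ip{\mtx{N}\mtx{U}}{\mtx{H}} = \ip{\mtx{N}}{\mtx{H}\mtx{U}^\T}$ for the matrix $\mtx{N} := (\UUTMinusXXT) - \A^*\A(\UUTMinusXXT)$. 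Rewriting the right-hand side, this inner product equals $\ip{\UUTMinusXXT}{\mtx{H}\mtx{U}^\T} - \ip{\A(\UUTMinusXXT)}{\A(\mtx{H}\mtx{U}^\T)}$, which is exactly the quantity controlled by the RIP inner-product characterization in Lemma~\ref{lemma:rip:innerproducts}.

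The key step is then a rank bookkeeping argument so that Lemma~\ref{lemma:rip:innerproducts} applies. The matrix $\UUTMinusXXT$ has rank at most $2r$ (it is a difference of two rank-$r$ matrices), and $\mtx{H}\mtx{U}^\T$ has rank at most $r$; hence both lie in the span of matrices of rank at most $2r$, so the hypothesis of $4r$-RIP is what is needed, and the bound yields $\abs{\ip{\nabla F(\mtx{U}) - \nabla f(\mtx{U})}{\mtx{H}}} \leq \delta_{4r}\fronorm{\UUTMinusXXT}\fronorm{\mtx{H}\mtx{U}^\T}$. This gives the first displayed inequality; note it holds for \emph{all} $\mtx{H}$, and the hypothesis $\dist(\mtx{U},\mtx{X}) \le \frac14\opnorm{\mtx{X}}$ is not actually used in this first part (it is presumably stated only to match the companion lemmas, or can be dropped).

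For the second displayed inequality, I would take $\mtx{H} = \nabla f(\mtx{U}) - \nabla F(\mtx{U})$ in the first inequality. Then the left side becomes $\fronorm{\nabla f(\mtx{U}) - \nabla F(\mtx{U})}^2$, while on the right we bound $\fronorm{\mtx{H}\mtx{U}^\T} \le \opnorm{\mtx{U}}\fronorm{\mtx{H}} = \opnorm{\mtx{U}}\fronorm{\nabla f(\mtx{U}) - \nabla F(\mtx{U})}$. Dividing through by $\fronorm{\nabla f(\mtx{U}) - \nabla F(\mtx{U})}$ (the inequality being trivial if this is zero) yields $\fronorm{\nabla f(\mtx{U}) - \nabla F(\mtx{U})} \le \delta_{4r}\fronorm{\UUTMinusXXT}\opnorm{\mtx{U}}$, as claimed. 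I do not anticipate a genuine obstacle here: the only things to be careful about are the correct use of the trace-inner-product adjoint identity and the rank count ($2r + r \le 4r$), both of which are routine; the main content is simply recognizing that the gradient mismatch is precisely the RIP defect applied to the pair $(\UUTMinusXXT,\ \mtx{H}\mtx{U}^\T)$.
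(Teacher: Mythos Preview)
Your proposal is correct and essentially identical to the paper's proof: both reduce the first inequality to the RIP inner-product bound (Lemma~\ref{lemma:rip:innerproducts}) applied to the pair $(\UUTMinusXXT,\ \mtx{H}\mtx{U}^\T)$ with the same rank count, and your observation that the $\dist$ hypothesis is unused is accurate. For the second inequality the paper uses the variational characterization $\fronorm{\cdot} = \sup_{\fronorm{\mtx{H}}\le 1}\ip{\cdot}{\mtx{H}}$ rather than your specific choice $\mtx{H} = \nabla f(\mtx{U}) - \nabla F(\mtx{U})$, but these are of course equivalent.
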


\begin{lemma}
\label{lemma:concentration:gradientnorm}
Let $\mathcal{A}$ be a linear map obeying rank-$6r$ RIP with constant $\delta_{6r}$. Suppose that
$\delta_{6r} \leq 1/10$. Then for all $\mtx{U} \in \R^{n \times r}$, we have that
\beqs
    \fronorm{\UUTMinusXXT}^2 \geq \frac{10}{17} \frac{1}{\norm{\mtx{U}}^2} \fronorm{ \nabla f(\mtx{U}) }^2 \:.
\eeqs
\end{lemma}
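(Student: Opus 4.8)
The plan is to establish the equivalent bound $\fronorm{\nabla f(\mtx{U})}^2 \le \frac{17}{10}\,\norm{\mtx{U}}^2\,\fronorm{\UUTMinusXXT}^2$ for every $\mtx{U}\in\R^{n\times r}$; dividing through by $\norm{\mtx{U}}^2$ then recovers the stated inequality, the degenerate case $\mtx{U}=\mtx{0}$ being trivial since both sides vanish. Writing $\mtx{E} := \UUTMinusXXT$ and recalling the gradient formula $\nabla f(\mtx{U}) = \A^*\A(\mtx{E})\,\mtx{U}$, I would bound the Frobenius norm by duality, $\fronorm{\nabla f(\mtx{U})} = \sup_{\mtx{H}\in\R^{n\times r},\ \fronorm{\mtx{H}}=1}\ip{\nabla f(\mtx{U})}{\mtx{H}}$, and then transfer $\mtx{U}$ onto the test direction using the adjoint:
\[
\ip{\nabla f(\mtx{U})}{\mtx{H}} = \ip{\A^*\A(\mtx{E})}{\mtx{H}\mtx{U}^\T} = \ip{\A(\mtx{E})}{\A(\mtx{H}\mtx{U}^\T)}.
\]

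The reason for this rewriting is that $\mtx{E}$ has rank at most $2r$ while $\mtx{H}\mtx{U}^\T$ has rank at most $r$, so the right-hand side is exactly the kind of quantity RIP controls. Applying the mixed-rank form of Lemma~\ref{lemma:rip:innerproducts} — the polarization argument behind it gives the estimate with the rank-$(2r{+}r)$ constant $\delta_{3r}$, and $\delta_{3r}\le\delta_{6r}$; alternatively one invokes Lemma~\ref{lemma:rip:innerproducts} with $r$ replaced by $2r$, which is legitimate since $\delta_{4r}\le\delta_{6r}$ — yields
\[
\ip{\A(\mtx{E})}{\A(\mtx{H}\mtx{U}^\T)} \le \ip{\mtx{E}}{\mtx{H}\mtx{U}^\T} + \delta_{6r}\,\fronorm{\mtx{E}}\,\fronorm{\mtx{H}\mtx{U}^\T}.
\]
For the first term I would use $\fronorm{\mtx{H}}=1$ together with the submultiplicative inequality $\fronorm{\mtx{A}\mtx{B}}\le\opnorm{\mtx{B}}\fronorm{\mtx{A}}$ to write $\ip{\mtx{E}}{\mtx{H}\mtx{U}^\T} = \ip{\mtx{E}\mtx{U}}{\mtx{H}} \le \fronorm{\mtx{E}\mtx{U}} \le \norm{\mtx{U}}\,\fronorm{\mtx{E}}$, and for the second term the same inequality gives $\fronorm{\mtx{H}\mtx{U}^\T}\le\norm{\mtx{U}}\fronorm{\mtx{H}} = \norm{\mtx{U}}$. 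Taking the supremum over $\mtx{H}$ then gives $\fronorm{\nabla f(\mtx{U})} \le (1+\delta_{6r})\,\norm{\mtx{U}}\,\fronorm{\mtx{E}}$, and squaring with $\delta_{6r}\le 1/10$, so that $(1+\delta_{6r})^2\le 121/100\le 17/10$, completes the argument.

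I do not anticipate a genuine obstacle here: the only step that requires any care is the RIP inner-product estimate for the mismatched pair $(\mtx{E},\mtx{H}\mtx{U}^\T)$ of ranks $2r$ and $r$, which is Lemma~\ref{lemma:rip:innerproducts} run at the appropriate rank level, and everything else is the adjoint identity plus $\fronorm{\mtx{A}\mtx{B}}\le\opnorm{\mtx{B}}\fronorm{\mtx{A}}$. Note also that the claimed constant $\frac{10}{17}$ is not tight — the argument above in fact yields $\frac{100}{121}$ — so there is ample slack, the loose value presumably just matching what is convenient when this bound is combined with the regularity condition downstream. Finally, the estimate is valid for arbitrary $\mtx{U}$, with no hypothesis that $\mtx{U}$ be near $\mtx{X}$.
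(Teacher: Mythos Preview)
Your argument is correct. The duality step $\fronorm{\nabla f(\mtx{U})}=\sup_{\fronorm{\mtx{H}}=1}\ip{\A(\mtx{E})}{\A(\mtx{H}\mtx{U}^\T)}$ followed by one application of Lemma~\ref{lemma:rip:innerproducts} (at rank level $2r$, hence using only $\delta_{4r}\le\delta_{6r}$) and the submultiplicative bound $\fronorm{\mtx{H}\mtx{U}^\T}\le\opnorm{\mtx{U}}$ gives $\fronorm{\nabla f(\mtx{U})}\le(1+\delta_{6r})\opnorm{\mtx{U}}\fronorm{\mtx{E}}$, and squaring yields the claim with the better constant $100/121$.

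This is a genuinely different and more economical route than the paper's. The paper instead establishes the intermediate inequality $\twonorm{\A(\mtx{\Delta})}^2-\tfrac{1}{2\opnorm{\mtx{U}}^2}\fronorm{\nabla f(\mtx{U})}^2\ge\tfrac14\fronorm{\mtx{\Delta}}^2$ and then caps $\twonorm{\A(\mtx{\Delta})}^2$ by $(1+\delta)\fronorm{\mtx{\Delta}}^2$. Proving that intermediate inequality requires expanding $\fronorm{\nabla f(\mtx{U})}^2=\ip{\A(\mtx{\Delta})}{\A(\A^*\A(\mtx{\Delta})\,\UUT)}$ and applying RIP several times, once to the pair $\bigl(\mtx{\Delta},\,\mtx{\Delta}-\gamma\A^*\A(\mtx{\Delta})\,\UUT\bigr)$ where the second factor can have rank up to $3r$; this is the origin of the $6r$-RIP hypothesis in the lemma statement. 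Your approach sidesteps all of that: a single RIP inner-product estimate suffices, only $4r$- (indeed $3r$-) RIP is actually used, and the resulting constant is sharper. The paper's longer computation does not appear to yield anything extra for the downstream argument, so your proof is simply a cleaner substitute.
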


We shall prove these three lemmas in Sections \ref{sec:proofs:first:regularity},
\ref{secconcengrad}, and \ref{secconcengradnorm}. However, we first explain how
the regularity condition follows from these three lemmas. To begin, note that
\begin{align}
\COND{
  \ip{\nabla F(\mtx{U})}{\UMinusXR} &= \ip{\nabla f(\mtx{U})}{\UMinusXR} + \ip{\nabla F(\mtx{U}) - \nabla f(\mtx{U})}{\UMinusXR} \nonumber \\
                                    &\stackrel{(a)}{\leq}  \ip{\nabla f(\mtx{U})}{\UMinusXR} + \frac{1}{10} \fronorm{\UUTMinusXXT} \fronorm{(\UMinusXR)\mtx{U}^\T} \nonumber \\
                                    &\stackrel{(b)}{\leq}  \ip{\nabla f(\mtx{U})}{\UMinusXR} + \frac{1}{20} \left( \fronorm{\UUTMinusXXT}^2 + \fronorm{(\UMinusXR)\mtx{U}^\T}^2 \right)\label{eq:grad_f_upper_bound}
}{
    \ip{\nabla &F(\mtx{U})}{\UMinusXR}- \ip{\nabla f(\mtx{U})}{\UMinusXR}\\
                                    &= \ip{\nabla F(\mtx{U}) - \nabla f(\mtx{U})}{\UMinusXR} \nonumber \\
                                    &\stackrel{(a)}{\leq}  \frac{1}{10} \fronorm{\UUTMinusXXT} \fronorm{(\UMinusXR)\mtx{U}^\T} \nonumber \\
                                    &\stackrel{(b)}{\leq}  \frac{1}{20} \left( \fronorm{\UUTMinusXXT}^2 + \fronorm{(\UMinusXR)\mtx{U}^\T}^2 \right)\label{eq:grad_f_upper_bound}
}
\end{align}
where (a) holds from Cauchy-Schwarz followed by
Lemma~\ref{lemma:concentration:gradient}, using the fact that $\delta_{6r}\le
\frac{1}{10}$ as assumed in the statement of Theorem \ref{mainthm} and (b)
follows from $2ab \leq a^2 + b^2$.

Combining \eqref{eq:grad_f_upper_bound} with Lemma \ref{lemma:expected_f_rc_cond} for any $\mtx{U}$ obeying $\opnorm{\UMinusXR}\le \frac{1}{4}\sigma_r(\mtx{X})$, we have
\begin{align}
    \ip{\nabla \COND{}{&}f(\mtx{U})}{\UMinusXR} \COND{&}{\nonumber \\ &}\geq \frac{\sigma_r^2(\mtx{X})}{4} \fronorm{\UMinusXR}^2 + \frac{1}{5} \fronorm{\UUTMinusXXT}^2 \nonumber\\
                                      &\stackrel{(a)}{\geq} \frac{\sigma_r^2(\mtx{X})}{4} \fronorm{\UMinusXR}^2 + \frac{2}{17} \frac{1}{\norm{\mtx{U}}^2} \fronorm{ \nabla f(\mtx{U}) }^2 \nonumber\\
                                      &\stackrel{(b)}{\geq} \frac{\sigma_r^2(\mtx{X})}{4} \fronorm{\UMinusXR}^2 + \frac{32}{425} \frac{1}{\norm{\mtx{X}}^2} \fronorm{ \nabla f(\mtx{U}) }^2 \label{intermahdi} \:,
\end{align}
where (a) follows from Lemma~\ref{lemma:concentration:gradientnorm} and
(b) follows from the fact that $\norm{\mtx{U}} \leq \frac{5}{4} \norm{\mtx{X}}$
when $\dist(\mtx{U},\mtx{X})\le \frac{1}{4}\opnorm{\mtx{X}}$.
Equation~\eqref{intermahdi} shows that $f(\mtx{U})$ obeys $\RC(4/\sigma_r^2(\mtx{X}),
\frac{425}{32} \opnorm{\mtx{X}}^2, \frac{1}{4} \sigma_r(\mtx{X}))$. The
convergence result in Equation~\eqref{convthm} now follows from
Lemma~\ref{lemma:rc_implies_convergence}.  \COND{All that remains is to prove Lemmas
\ref{lemma:expected_f_rc_cond}, \ref{lemma:concentration:gradient},
and \ref{lemma:concentration:gradientnorm}.}{}

\if\MODE1
\COND{}{\section{Missing proofs for PSD case from Section~\ref{sec:proofs}}}

\SUPsubsection{Proof of the regularity condition for the function $F$ (Lemma~\ref{lemma:expected_f_rc_cond})}
\label{sec:proofs:first:regularity}
We first state some properties of the Procrustes problem and its optimal
solution. Let $\mtx{U}, \mtx{X} \in \R^{n \times r}$ and define $\mtx{H} :=
\UMinusXR$, where $\mtx{R}$ is the orthogonal matrix which minimizes
$\fronorm{\mtx{U}-\mtx{X}\mtx{R}}$. Let $\mtx{A} \mtx{\Sigma} \mtx{B}^\T$ be
the SVD of $\mtx{X}^\T\mtx{U}$; we know that the optimal $\mtx{R}$ is $\mtx{R} = \mtx{A}\mtx{B}^\T$. Thus,
\beqs
\mtx{U}^\T \mtx{XR} = \mtx{B} \mtx{\Sigma} \mtx{B}^\T = (\mtx{XR})^\T \mtx{U},
\eeqs
which shows that $\mtx{U}^\T\mtx{XR}$ is a symmetric PSD matrix. Furthermore, note that since
\beqs
\mtx{H}^\T \mtx{XR} = \mtx{U}^\T \mtx{XR} - \mtx{R}^\T \mtx{X}^\T \mtx{X R} = (\mtx{XR})^\T \mtx{ U } - \mtx{R}^\T \mtx{X}^\T \mtx{X R} = (\mtx{XR})^\T (\mtx{U} - \mtx{XR}) = (\mtx{XR})^\T \mtx{H} \:,
\eeqs
we can conclude that $\mtx{H}^\T \mtx{XR}$ is symmetric. To avoid carrying $\mtx{R}$ in our equations we perform the change of variable $\mtx{X}\leftarrow\mtx{X}\mtx{R}$. That is, without loss of generality we assume $\mtx{R}=\mtx{I}$ and that $\mtx{U}^\T \mtx{X} \succeq 0$ and $\mtx{H}^\T \mtx{X} = \mtx{X}^\T \mtx{H}$.

Note that for any $\mtx{U}$ obeying dist$(\mtx{U},\mtx{X})\le \frac{1}{4}\opnorm{\mtx{X}}$ we have
\begin{align*}
  \fronorm{(\UUTMinusXXT)\mtx{U}} \leq \fronorm{\UUTMinusXXT}\opnorm{\mtx{U}} \leq \frac{5}{4} \opnorm{\mtx{X}} \fronorm{\UUTMinusXXT}.
\end{align*}
Using the latter along with the simplifications discussed above, to prove Lemma~\eqref{mahdilemeq} it suffices to prove
\begin{align}
\ip{(\UUTMinusXXT)\mtx{U}&}{\mtx{U}-\mtx{X}} - \frac{1}{20}\left( \fronorm{\UUTMinusXXT}^2 + \fronorm{(\mtx{U}-\mtx{X})\mtx{U}^\T}^2 \right) \nonumber \\
                         &\geq \frac{\sigma_r^2(\mtx{X})}{4} \fronorm{\mtx{U}-\mtx{X}}^2 + \frac{1}{5} \fronorm{\UUTMinusXXT}^2. \label{eq:almost_rc_easier}
\end{align}
Equation \eqref{eq:almost_rc_easier} can equivalently be written in the form
\begin{align*}
  0 \leq \Tr\bigg(
  &(\mtx{H}^\T\mtx{H})^2 + 3 \mtx{H}^\T\mtx{H} \mtx{H}^\T\mtx{X} + (\mtx{H}^\T\mtx{X})^2 + \mtx{H}^\T\mtx{H} \mtx{X}^\T\mtx{X} \\
  &- \left( \frac{1}{20} + \frac{1}{5} \right)\left[  (\mtx{H}^\T\mtx{H})^2 + 4 \mtx{H}^\T\mtx{H} \mtx{H}^\T\mtx{X} + 2 (\mtx{H}^\T\mtx{X})^2 + 2 \mtx{H}^\T\mtx{H} \mtx{X}^\T\mtx{X}  \right] \\
  &- \frac{1}{20} \left[  (\mtx{H}^\T\mtx{H})^2 + 2 \mtx{H}^\T\mtx{H} \mtx{H}^\T\mtx{X} + \mtx{H}^\T\mtx{H} \mtx{X}^\T\mtx{X}  \right] \\
  &-\frac{\sigma_r^2(\mtx{X})}{4} \mtx{H}^\T \mtx{H}\bigg) \:.
\end{align*}
Rearranging terms, we arrive at
\begin{align}
  0 &\leq \Tr\bigg( c_1 (\mtx{H}^\T\mtx{H})^2 + c_2 \mtx{H}^\T\mtx{H} \mtx{H}^\T\mtx{X} + c_3 (\mtx{H}^\T\mtx{X})^2 + c_4 \mtx{H}^\T\mtx{H} \mtx{X}^\T\mtx{X} - \frac{\sigma_r^2(\mtx{X})}{4} \mtx{H}^\T\mtx{H} \bigg) \nonumber \\
    &= \Tr\bigg(  \big(\frac{c_2}{2\sqrt{c_3}} \mtx{H}^\T\mtx{H} + \sqrt{c_3} \mtx{H}^\T\mtx{X}\big)^2 + \big(c_1 - \frac{c_2^2}{4 c_3}\big) (\mtx{H}^\T\mtx{H})^2 + c_4 \mtx{H}^\T\mtx{H} \mtx{X}^\T\mtx{X} - \frac{\sigma_r^2(\mtx{X})}{4} \mtx{H}^\T\mtx{H}\bigg). \label{eq:radius_sufficient}
\end{align}
Here the constants $c_1,c_2,c_3,c_4$ are defined as
\begin{align*}
  c_1 = \frac{7}{10}, \;\;
  c_2 = \frac{19}{10}, \;\;
  c_3 = \frac{1}{2}, \;\;
  c_4 = \frac{9}{20} \:.
\end{align*}
Since $c_1 < \frac{c_2^2}{4c_3}$, to prove \eqref{eq:radius_sufficient} it thus suffices to require that
\begin{align*}
    \opnorm{ \mtx{H} }^2 \leq \frac{c_4-1/4}{ c_2^2/4c_3 - c_1} \sigma_r^2(\mtx{X}) = \frac{40}{221} \sigma_r^2(\mtx{X}) \:.
\end{align*}

\SUPsubsection{Proof of gradient concentration (Lemma~\ref{lemma:concentration:gradient})}
\label{secconcengrad}
Define $\mtx{\Delta} := \UUTMinusXXT$. Then,
\begin{align*}
    \abs{\ip{ \nabla f(\mtx{U}) - \nabla F(\mtx{U}) }{\mtx{H}} } = \abs{ \ip{\A(\mtx{\Delta})}{\A(\mtx{H}\mtx{U}^\T)} - \ip{\mtx{\Delta}}{\mtx{H}\mtx{U}^\T} }
                                                                &\stackrel{(a)}{\leq} \delta_{4r} \fronorm{\UUTMinusXXT} \fronorm{\mtx{H}\mtx{U}^\T}
\end{align*}
where (a) follows from Lemma~\ref{lemma:rip:innerproducts}, since $\rank(\mtx{\Delta}) \leq 2r$
and $\rank(\mtx{H}\mtx{U}^\T) \leq r$.  This proves the first part of the
lemma. To prove the second part, by the variational form of the Frobenius norm,
we have
\begin{align*}
  \fronorm{ \nabla f(\mtx{U}) - \nabla F(\mtx{U}) } &= \sup_{\mtx{H} \in \R^{n \times r}, \fronorm{\mtx{H}} \leq 1} \ip{ \nabla f(\mtx{U}) - \nabla F(\mtx{U}) }{\mtx{H}} \\
                                                    &\leq \delta_{4r} \fronorm{\UUTMinusXXT} \sup_{\mtx{H} \in \R^{n \times r}, \fronorm{\mtx{H}} \leq 1}\fronorm{\mtx{H}\mtx{U}^\T} \:.
\end{align*}
The result now follows from $\fronorm{\mtx{H}\mtx{U}^\T} \leq \fronorm{\mtx{H}} \opnorm{\mtx{U}} \leq \opnorm{\mtx{U}}$.

\SUPsubsection{Proof of Lipschitz gradients around optimal solution (Lemma~\ref{lemma:concentration:gradientnorm})}
\label{secconcengradnorm}

Define $\mtx{\Delta} := \UUTMinusXXT$ and $\delta := \delta_{6r}$. Suppose we show
that
\begin{align}
    \norm{\A(\mtx{\Delta})}_F^2 - \gamma \norm{\nabla f(\mtx{U})}_F^2 \geq \frac{1}{4} \fronorm{\mtx{\Delta}}^2 \:, \label{eq:concengradnorm:b}
\end{align}
where $\gamma := 1/2 \norm{\mtx{U}}^2$.  Then
by $2r$-RIP we have
\begin{align*}
    \frac{1}{4} \fronorm{\mtx{\Delta}}^2 + \frac{1}{2 \norm{\mtx{U}}^2} \norm{\nabla f(\mtx{U})}^2 \leq \fronorm{\A(\mtx{\Delta})}^2 \leq (1+\delta) \fronorm{\mtx{\Delta}}^2 \leq (1+\frac{1}{10})\fronorm{\mtx{\Delta}}^2 \:,
\end{align*}
which yields the claim after rearranging.

We now focus on proving Equation~\eqref{eq:concengradnorm:b}.  Recall $\nabla
f(\mtx{U}) = \A^*\A(\mtx{\Delta}) \cdot \mtx{U}$, which implies $\norm{\nabla
f(\mtx{U})}_F^2 = \ip{\A(\mtx{\Delta})}{\A( \A^*\A(\mtx{\Delta}) \cdot \UUT
)}$.  Using this equality we have
\begin{align}
    \norm{\A(\mtx{\Delta})}_F^2 &- \gamma\norm{\nabla f(\mtx{U})}_F^2  \nonumber \\
                          &= \ip{\A(\mtx{\Delta})}{\A(\mtx{\Delta}) - \gamma \A( \A^*\A(\mtx{\Delta}) \cdot \UUT ) }  \nonumber \\
                                                       &= \ip{\A(\mtx{\Delta})}{\A( \mtx{\Delta} - \gamma  \A^*\A(\mtx{\Delta}) \cdot \UUT ) }  \nonumber \\
                                                       &\stackrel{(a)}{\geq} \ip{\mtx{\Delta}}{ \mtx{\Delta} - \gamma  \A^*\A(\mtx{\Delta}) \cdot \UUT } - \delta \norm{\mtx{\Delta}}_F \norm{  \mtx{\Delta} - \gamma  \A^*\A(\mtx{\Delta}) \cdot \UUT }_F  \nonumber \\
                                                       &= \ip{\mtx{\Delta}}{\mtx{\Delta}} - \gamma \ip{\mtx{\Delta}}{\A^*\A(\mtx{\Delta}) \cdot \UUT} - \delta \norm{\mtx{\Delta}}_F \norm{  \mtx{\Delta} - \gamma  \A^*\A(\mtx{\Delta}) \cdot \UUT }_F  \nonumber \\
                                                       &= \ip{\mtx{\Delta}}{\mtx{\Delta}} - \gamma \ip{\A(\mtx{\Delta} \UUT)}{\A(\mtx{\Delta})} - \delta \norm{\mtx{\Delta}}_F \norm{  \mtx{\Delta} - \gamma  \A^*\A(\mtx{\Delta}) \cdot \UUT }_F  \nonumber \\
                                                       &\stackrel{(b)}{\geq} \ip{\mtx{\Delta}}{\mtx{\Delta}} - \gamma \ip{\mtx{\Delta} \UUT}{\mtx{\Delta}} - \gamma \delta \norm{\mtx{\Delta} \UUT}_F \norm{\mtx{\Delta}}_F - \delta \norm{\mtx{\Delta}}_F \norm{  \mtx{\Delta} - \gamma  \A^*\A(\mtx{\Delta}) \cdot \UUT }_F \:, \label{eq:concengradnorm:c}
\end{align}
where both (a) and (b) hold by Lemma~\ref{lemma:rip:innerproducts} since
$\rank(\mtx{\Delta}) \leq 2r$,
$\rank(\mtx{\Delta} - \gamma  \A^*\A(\mtx{\Delta}) \cdot \UUT ) \leq 3r$,
and $\rank(\mtx{\Delta} \UUT) \leq r$.
We now control $\norm{ \mtx{\Delta} - \gamma \A^*A(\mtx{\Delta}) \cdot \UUT }_F$ from above.
Using the variational form of the Frobenius norm,
\begin{align*}
  \norm{ \mtx{\Delta} - \gamma \A^*A(\mtx{\Delta}) \cdot \UUT }_F &= \sup_{\mtx{V} \in \R^{n\times n}, \fronorm{\mtx{V}} \leq 1} \ip{ \mtx{\Delta} - \gamma \A^*\A(\mtx{\Delta}) \cdot \UUT }{\mtx{V}} \\
                                                      &= \sup_{\mtx{V} \in \R^{n\times n}, \fronorm{\mtx{V}} \leq 1} \ip{\mtx{\Delta}}{\mtx{V}} - \gamma \ip{\A^*\A(\mtx{\Delta}) \cdot \UUT}{\mtx{V}} \\
                                                      &= \sup_{\mtx{V} \in \R^{n\times n}, \fronorm{\mtx{V}} \leq 1} \ip{\mtx{\Delta}}{\mtx{V}} - \gamma \ip{\A^*\A(\mtx{\Delta})}{\mtx{V} \UUT} \\
                                                      &= \sup_{\mtx{V} \in \R^{n\times n}, \fronorm{\mtx{V}} \leq 1} \ip{\mtx{\Delta}}{\mtx{V}} - \gamma \ip{\A(\mtx{\Delta})}{\A(\mtx{V} \UUT)} \\
                                                      &\stackrel{(a)}{\leq} \sup_{\mtx{V} \in \R^{n\times n}, \fronorm{\mtx{V}} \leq 1} \ip{\mtx{\Delta}}{\mtx{V}} - \gamma \ip{\mtx{\Delta}}{\mtx{V}\UUT} + \gamma \delta \norm{\mtx{\Delta}}_F \norm{\mtx{V} \UUT}_F \\
                                                      &= \sup_{\mtx{V} \in \R^{n\times n}, \fronorm{\mtx{V}} \leq 1} \ip{\mtx{\Delta}(I-\gamma \UUT)}{\mtx{V}} + \gamma \delta \norm{\mtx{\Delta}}_F \norm{\mtx{V} \UUT}_F \\
                                                      &\stackrel{(b)}{\leq} \norm{\mtx{\Delta} - \gamma \mtx{\Delta} \UUT}_F + \gamma \delta \norm{\mtx{\Delta}}_F \norm{\UUT} \:,
\end{align*}
where (a) holds again by Lemma~\ref{lemma:rip:innerproducts} since
$\rank(\mtx{V} \UUT) \leq r$, and (b) holds since $\norm{\mtx{V}\UUT}_F \leq
\fronorm{\mtx{V}} \norm{\UUT} \leq \norm{\UUT}$.  Plugging this upper bound into
Equation~\eqref{eq:concengradnorm:c}, we have
\begin{align}
  \norm{\A(\mtx{\Delta})}_F^2 &- \gamma\norm{\nabla f(\mtx{U})}_F^2  \nonumber\\
  &\geq \ip{\mtx{\Delta}}{\mtx{\Delta}} - \gamma \ip{\mtx{\Delta} \UUT}{\mtx{\Delta}} - \gamma \delta \norm{\mtx{\Delta} \UUT}_F \norm{\mtx{\Delta}}_F - \delta \norm{\mtx{\Delta}}_F \norm{\mtx{\Delta} - \gamma \mtx{\Delta} \UUT}_F - \gamma \delta^2 \norm{\mtx{\Delta}}_F^2 \norm{\UUT} \nonumber\\
  &\stackrel{(a)}{\geq} \ip{\mtx{\Delta}}{\mtx{\Delta}} - \gamma \norm{\UUT} \ip{\mtx{\Delta}}{\mtx{\Delta}} - \gamma \delta \norm{\mtx{\Delta}}_F^2 \norm{\UUT} - \delta \norm{\mtx{\Delta}}_F^2 \norm{I - \gamma \UUT} - \gamma \delta^2 \norm{\mtx{\Delta}}_F^2 \norm{\UUT}\label{myeq}.
\end{align}
where (a) holds since $\ip{\mtx{\Delta} \UUT}{\mtx{\Delta}} \leq \norm{\mtx{U}}^2 \ip{\mtx{\Delta}}{\mtx{\Delta}}$.
Using the fact that $\delta \leq 1/10$ \eqref{myeq} implies,
\begin{align}
  \norm{\A(\mtx{\Delta})}_F^2 &- \gamma\norm{\nabla f(\mtx{U})}_F^2 \nonumber \\
                        &\geq \left(1 - \gamma \sigma_1^2(U) - \frac{\gamma}{10}\sigma_1^2(U) - \frac{\gamma}{100} \sigma_1^2(U) - \frac{1}{10} \norm{ I - \gamma \UUT } \right) \norm{\mtx{\Delta}}_F^2 \label{eq:concengradnorm:a} \:.
\end{align}
For $\gamma = \frac{1}{2 \norm{\mtx{U}}^2}$, we have
\begin{align*}
\norm{I - \gamma \UUT}
= \max( \abs{1 - \gamma \sigma_1^2(\mtx{U})}, \: \abs{1 -
\gamma\sigma_r^2(\mtx{U})}) =\max\left(\frac{1}{2}, \: 1-\frac{\sigma_r^2(\mtx{U})}{2\opnorm{\mtx{U}}^2}\right)= 1-\frac{\sigma_r^2(\mtx{U})}{2\opnorm{\mtx{U}}^2} \leq 1.
\end{align*}
Plugging this bound into
Equation~\eqref{eq:concengradnorm:a}, we get
\begin{align*}
    \norm{\A(\mtx{\Delta})}_F^2 - \frac{1}{2\norm{\mtx{U}}^2} \norm{\nabla f(\mtx{U})}_F^2 \geq \left( 1 - \frac{1}{2} - \frac{1}{20} - \frac{1}{200} - \frac{1}{10} \right) \norm{\mtx{\Delta}}_F^2 \geq \frac{1}{4} \fronorm{\mtx{\Delta}}^2 \:.
\end{align*}

\subsection{Proof of initialization (Equation \eqref{initthm})}
\label{sec:proofs:initialization}
Using Lemma~\ref{lemma:rip:innerproducts}, we can conclude that
$\rho(\A)$ from Lemma~\ref{lemma:hardthresholding} is bounded by
$\rho(\A) \leq 2 \delta_{4r} \leq 1/5$. Setting
$\widetilde{\mtx{M}}_0 = \mtx{0}_{n \times n}$ and applying Lemma~\ref{lemma:hardthresholding}
to our initialization iterates, we have that
\begin{align*}
  \fronorm{ \widetilde{\mtx{M}}_\tau - \XXT } \leq (1/5)^{\tau} \fronorm{\XXT} \leq (1/5)^{\tau} \opnorm{\mtx{X}} \fronorm{\mtx{X}} \:.
\end{align*}
From Lemma~\ref{lemma:dist:upperbound}, we have that
\begin{align*}
  \dist(\mtx{U}_0, \mtx{X}) \leq \frac{\sqrt{2}}{\sigma_r(\mtx{X})} \fronorm{ \widetilde{\mtx{M}}_\tau - \XXT } \leq \sqrt{2} (1/5)^{\tau} \sqrt{\kappa} \fronorm{\mtx{X}} \:.
\end{align*}
Hence, if we want the RHS to be upper bounded by $\frac{1}{4}
\sigma_r(\mtx{X})$, we require
\begin{align*}
  \sqrt{2} (1/5)^{\tau} \sqrt{\kappa} \fronorm{\mtx{X}} \leq \frac{1}{4} \sigma_r(\mtx{X}) \Longrightarrow (1/5)^{\tau} \leq \frac{\sigma_r(\mtx{X})}{4 \sqrt{2} \sqrt{\kappa} \fronorm{\mtx{X}}} \:.
\end{align*}
Since $\fronorm{\mtx{X}} \leq \sqrt{r} \opnorm{\mtx{X}}$, it is enough to require that
\begin{align}
    \tau \geq  \log(\sqrt{r}\kappa) +2 \:. \label{stthree}
\end{align}
Similarly, it is easy to check that if $\tau$ satisfies \eqref{stthree}, then
\begin{align*}
  \opnorm{\widetilde{\mtx{M}}_t - \XXT} \leq \fronorm{\widetilde{\mtx{M}}_t - \XXT} \leq \sigma_r^2(\mtx{X})/4 \:,
\end{align*}
also holds.

\COND{}{\section{Expanded details for Section~\ref{sec:proofs:rect}}}

\subsection{Proof for rectangular matrices (Theorem \ref{mainthm_general})}
\label{secgeneral}

\COND{
We now turn our attention to the general case where the matrices are
rectangular.  Recall that in this case, we want to recover a fixed but unknown
rank-$r$ matrix $\mtx{M} \in \R^{n_1 \times n_2}$ from linear measurements.
Assume that $\mtx{M}$ has a singular value decomposition of the form $\mtx{M} =
\mtx{A} \mtx{\Sigma} \mtx{B}^\T$.  Define $\mtx{X} = \mtx{A} \mtx{\Sigma}^{1/2}
\in \R^{n_1 \times r}$ and $\mtx{Y} = \mtx{B} \mtx{\Sigma}^{1/2} \in \R^{n_2
\times r}$.  With this piece of notation the iterates $\mtx{U}_\tau \in \R^{n_1
\times r}, \mtx{V}_\tau \in \R^{n_2 \times r}$ in Algorithm~\ref{alg:rpf} can
be thought of as estimates of $\mtx{X}$ and $\mtx{Y}$. The proof of the
correctness of the initialization phase of Procrustes Flow (Theorem
\ref{mainthm_general}, Equation \eqref{initthm_general}) in the rectangular case
is similar to the PSD case (Theorem \ref{mainthm}, Equation
\eqref{initthm}) and is detailed in Section \ref{secinitgen}. In this section
we shall focus on proving the convergence guarantee provided in Theorem
\ref{mainthm_general}, Equation \eqref{convthm_general}.
}{
In this section we detail the proof for the rectangular case.
We duplicate some of the material in Section~\ref{sec:proofs:rect} for clarity.
}

To simplify exposition we aggregate the pairs of matrices $(\mtx{U},\mtx{V})$, $(\mtx{U}_\tau,\mtx{V}_\tau)$,  $(\mtx{X},\mtx{Y})$, and $(\mtx{X},-\mtx{Y})$ into larger ``lifted" matrices as follows
\begin{align*}
\mtx{W} := \blockvec{\mtx{U}}{\mtx{V}},\quad\mtx{W}_\tau := \blockvec{\mtx{U}_\tau}{\mtx{V}_\tau},\quad\mtx{Z} := \blockvec{\mtx{X}}{\mtx{Y}},\quad\text{and}\quad \widetilde{\mtx{Z}} := \blockvec{\mtx{X}}{-\mtx{Y}}.
\end{align*}
Before we continue further we first record a few simple facts about these new variables which we will utilize multiple times in the sequel. First, note that for $\ell=1,2,...,r$, we have $\sigma_\ell^2(\mtx{Z})=\sigma_\ell^2(\widetilde{\mtx{Z}})=2\sigma_\ell(\mtx{M})$. Also, since $\mtx{X}^\T\mtx{X}=\mtx{Y}^\T\mtx{Y}=\mtx{\Sigma}$, we have $\mtx{Z}^\T\widetilde{\mtx{Z}}=\widetilde{\mtx{Z}}^\T\mtx{Z}=\mtx{0}_{r\times r}$.

To prove Theorem~\ref{mainthm_general}, Equation~\eqref{convthm_general}, we
will demonstrate that the function $g(\mtx{W}):=g(\mtx{U},\mtx{V})$ over the
variable $\mtx{W}$ has similar form to $f(\mtx{U})$ over the variable
$\mtx{U}$. To see this connection clearly, we need a few useful block matrix
operators and definitions. Let $\Sym : \R^{n_1\times n_2} \longrightarrow
\R^{n_1\times n_2}$ be defined as
\begin{align*}
    \Sym(\mtx{A}) := \blockmat{\mtx{0}_{n_1 \times n_1}}{\mtx{A}}{\mtx{A}^\T}{\mtx{0}_{n_2 \times n_2}} \:.
\end{align*}
We note for future use that with this notation we have $\Sym(\mtx{M}) = \frac{1}{2}(\mtx{Z}\mtx{Z}^\T - \widetilde{\mtx{Z}}\widetilde{\mtx{Z}}^\T)$.
Given a block matrix $\mtx{A} \in \R^{(n_1+n_2) \times (n_1+n_2)}$ partitioned as
\begin{align*}
    \mtx{A} = \blockmat{\mtx{A}_{11}}{\mtx{A}_{12}}{\mtx{A}_{21}}{\mtx{A}_{22}},\quad\text{with}\quad \mtx{A}_{11} \in \R^{n_1 \times n_1},\quad\mtx{A}_{12}\in\R^{n_1\times n_2},\quad \mtx{A}_{21} \in \R^{n_2 \times n_1},\quad\text{and}\quad\mtx{A}_{22} \in \R^{n_2 \times n_2} \:,
\end{align*}
we define the linear operators $\Projdiag$ and $\Projoff$ from $\R^{(n_1+n_2) \times
(n_1+n_2)} \longrightarrow \R^{(n_1+n_2) \times (n_1+n_2)}$ as follows
\begin{align*}
    \Projdiag(\mtx{A}) := \blockmat{\mtx{A}_{11}}{\mtx{0}_{n_1 \times n_2}}{\mtx{0}_{n_2 \times n_1}}{\mtx{A}_{22}}, \quad
    \Projoff(\mtx{A}) := \blockmat{\mtx{0}_{n_1 \times n_1}}{\mtx{A}_{12}}{\mtx{A}_{21}}{\mtx{0}_{n_2 \times n_2}} \:.
\end{align*}
Our final piece of notation is an augmented measurement map which works over
lifted matrices, which we call $\B$.  The map $\B :
\R^{(n_1+n_2)\times(n_1+n_2)} \longrightarrow \R^m$ is defined as
\begin{align*}
    \B(\mtx{X})_k := \ip{\mtx{B}_k}{\mtx{X}}, \qquad \mtx{B}_k := \Sym(\mtx{A}_k) \:.
\end{align*}
In this lifted space the function $g$ takes the form
\begin{align*}
  g(\mtx{W}):=g(\mtx{U},\mtx{V})&=\frac{1}{2} \twonorm{ \A(\mtx{U}\mtx{V}^\T) - \vct{b} }^2 + \frac{1}{16}\fronorm{\mtx{U}^\T\mtx{U} - \mtx{V}^\T\mtx{V}}^2\:\\
                               &=\frac{1}{4}\twonorm{\mathcal{B}\left(\text{Sym}\left(\mtx{U}\mtx{V}^T\right)-\text{Sym}\left(\mtx{M}\right)\right)}^2+\frac{1}{32}\fronorm{\text{Sym}\left(\mtx{U}\mtx{V}^T\right)-\text{Sym}\left(\mtx{M}\right)}^2
\end{align*}
Note that the updates of the Procrustes Flow algorithm in Equations \eqref{rectgraddescentupdate_a} and \eqref{rectgraddescentupdate_b} are based on the gradients $\nabla_{\mtx{U}} g(\mtx{U}, \mtx{V})$ and $\nabla_{\mtx{V}}
g(\mtx{U}, \mtx{V})$ given by
\begin{align*}
    \nabla_{\mtx{U}} g(\mtx{U}, \mtx{V}) &= \sum_{k=1}^{m} \ip{\mtx{A}_k}{\UVTMinusM} \mtx{A}_k \mtx{V} + \frac{1}{4} \mtx{U}(\mtx{U}^\T \mtx{U} - \mtx{V}^\T \mtx{V}) \\
    \nabla_{\mtx{V}} g(\mtx{U}, \mtx{V}) &= \sum_{k=1}^{m} \ip{\mtx{A}_k}{\UVTMinusM} \mtx{A}_k^\T \mtx{U} + \frac{1}{4} \mtx{V}(\mtx{V}^\T \mtx{V} - \mtx{U}^\T \mtx{U}) \:.
\end{align*}
One can easily verify that this update has the following compact
representation in terms of the lifted space
\begin{align*}
    \nabla g(\mtx{W}) = \blockvec{\nabla_{\mtx{U}} g(\mtx{U}, \mtx{V})}{\nabla_{\mtx{V}} g(\mtx{U}, \mtx{V})} = \frac{1}{2} \B^*\B( \mtx{WW}^\T - \Sym(\mtx{M})) \mtx{W} + \frac{1}{4} (\Projdiag - \Projoff)(\mtx{WW}^\T) \mtx{W} \:.
\end{align*}
As in the proof for the PSD case, the crux of Theorem~\ref{mainthm_general}
lies in establishing that the regularity condition
\begin{align}
\label{mainineqg}
\ip{\nabla g(\mtx{W})}{\mtx{W}-\mtx{Z}\mtx{R}} \geq \frac{\sigma_r(\mtx{M})}{8} \fronorm{\mtx{W}-\mtx{Z}\mtx{R}}^2 + \frac{16}{1683\opnorm{\mtx{M}}} \fronorm{\nabla g(\mtx{W})}^2 \:,
\end{align}
holds for all $\mtx{W}\in\R^{(n_1+n_2)\times r}$ obeying
$\text{dist}\left(\mtx{W}, \mtx{Z} \right)\le
\frac{1}{2\sqrt{2}}\sigma_r^{1/2}(\mtx{M})$.
Assuming that this condition holds, we have that $g(\mtx{W})$ obeys
$\RC(8/\sigma_r(\mtx{M}), \frac{1683}{16} \opnorm{\mtx{M}}, \frac{1}{2\sqrt{2}}
\sigma_r^{1/2}(\mtx{M}))$, and hence Theorem~\ref{mainthm_general},
Equation~\eqref{convthm_general} immediately follows by appealing to
Lemma~\ref{lemma:rc_implies_convergence}.

To prove \eqref{mainineqg}, we make use of the similarity of the expressions
with the PSD case. We start, as before, by defining a reference function
$F(\mtx{W}) := \frac{1}{4} \fronorm{ \WWTMinusZZT }^2$ with gradient $\nabla
F(\mtx{W}) = (\WWTMinusZZT) \mtx{W}$.
We now \COND{state two lemmas}{restate Lemmas~\ref{one514_summary} and \ref{two514_summary} from Section~\ref{sec:proofs:rect}}
relating $g$ and $F$, which together immediately imply
\eqref{mainineqg}.
The first lemma relates the regularity condition of $g$ to that of $F$ by
utilizing RIP. The second lemma provides a Lipschitz type property for the
gradient of $g$.
\begin{lemma}
\label{one514} Assume the linear mapping $\mathcal{A}$ obeys $4r$-RIP with constant $\delta_{4r}$. Then $g$ obeys the following regularity condition for any $\mtx{W} \in \R^{(n_1+n_2)\times r}$ and $\mtx{R} \in \R^{r \times r}$,
\begin{align}
  \ip{\nabla g(\mtx{W})}{\WMinusZR} &\geq -\frac{\delta_{4r}}{2} \fronorm{ \WWTMinusZZT } \fronorm{(\WMinusZR)\mtx{W}^\T} \nonumber \\
  &\qquad+ \frac{1}{4} \ip{\nabla F(\mtx{W})}{\WMinusZR} + \frac{1}{8\norm{\mtx{M}} } \fronorm{\widetilde{\mtx{Z}}\widetilde{\mtx{Z}}^\T\mtx{W} }^2 \:.
  \label{eq:rect:inequalityone}
\end{align}
\end{lemma}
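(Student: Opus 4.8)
The plan is to run the same ``replace the measurement map by its mean'' argument used for the PSD case (Lemma~\ref{lemma:concentration:gradient}), while carefully tracking the extra term produced by the symmetric embedding. I would first record two elementary facts about the lifted map $\mathcal{B}$: it ignores the diagonal blocks of its argument, so $\mathcal{B}(\mtx{A}) = \mathcal{B}(\Projoff(\mtx{A}))$, and $\mathcal{B}(\Sym(\mtx{N})) = 2\A(\mtx{N})$ for every $\mtx{N}\in\R^{n_1\times n_2}$, since the two off-diagonal blocks of $\Sym(\mtx{A}_k)$ are transposes of each other. Because $\Projoff(\mtx{WW}^\T - \Sym(\mtx{M})) = \Sym(\UVTMinusM)$, these give the exact identity $\tfrac12\mathcal{B}^{*}\mathcal{B}(\mtx{WW}^\T - \Sym(\mtx{M})) = \Sym\!\big(\A^{*}\A(\UVTMinusM)\big)$, whose ``population'' value (with $\A^{*}\A$ replaced by $\mtx{I}$) is exactly $\Sym(\UVTMinusM) = \Projoff(\WWTMinusZZT)$. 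Since $\nabla F(\mtx{W}) = (\WWTMinusZZT)\mtx{W}$, this says that, up to an RIP fluctuation, $\nabla g(\mtx{W})$ agrees with $\big[\Projoff(\WWTMinusZZT) + \tfrac14(\Projdiag - \Projoff)(\mtx{WW}^\T)\big]\mtx{W}$.

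Next I would do the block book-keeping that isolates $\nabla F$. Using the two identities $\Sym(\mtx{M}) = \tfrac12(\mtx{ZZ}^\T - \widetilde{\mtx{Z}}\widetilde{\mtx{Z}}^\T)$ and $\Projdiag(\mtx{ZZ}^\T) = \tfrac12(\mtx{ZZ}^\T + \widetilde{\mtx{Z}}\widetilde{\mtx{Z}}^\T)$ recorded just before the lemma, a direct $2\times 2$-block computation gives
\begin{align*}
\Projoff(\WWTMinusZZT) + \tfrac14(\Projdiag - \Projoff)(\mtx{WW}^\T) = \tfrac14(\WWTMinusZZT) + \tfrac14\,\widetilde{\mtx{Z}}\widetilde{\mtx{Z}}^\T + \tfrac12\,\Projoff(\WWTMinusZZT) \:.
\end{align*}
Pairing against $\WMinusZR$: the first summand is $\tfrac14\ip{\nabla F(\mtx{W})}{\WMinusZR}$; for the second, $\mtx{Z}^\T\widetilde{\mtx{Z}} = \mtx{0}$ kills the $\mtx{Z}\mtx{R}$ part, leaving $\tfrac14\ip{\widetilde{\mtx{Z}}\widetilde{\mtx{Z}}^\T\mtx{W}}{\mtx{W}} = \tfrac14\fronorm{\widetilde{\mtx{Z}}^\T\mtx{W}}^2$, and since $\widetilde{\mtx{Z}}^\T\widetilde{\mtx{Z}} = \mtx{X}^\T\mtx{X} + \mtx{Y}^\T\mtx{Y} \preceq 2\sigma_1(\mtx{M})\mtx{I}_r$ this is at least $\tfrac{1}{8\norm{\mtx{M}}}\fronorm{\widetilde{\mtx{Z}}\widetilde{\mtx{Z}}^\T\mtx{W}}^2$, which is the advertised bonus; the third summand contributes $\tfrac12\ip{\Projoff(\WWTMinusZZT)\mtx{W}}{\WMinusZR} = \tfrac12\ip{\UVTMinusM}{\mtx{H}_U\mtx{V}^\T + \mtx{U}\mtx{H}_V^\T}$ (writing $\mtx{H}_U,\mtx{H}_V$ for the two blocks of $\WMinusZR$), which must be shown nonnegative so that it can be dropped. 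This last point is where I expect the real difficulty to lie: I would use that $\mtx{R}$ is the orthogonal Procrustes minimizer, so that after the change of variables $\mtx{Z}\leftarrow\mtx{Z}\mtx{R}$ (as in the PSD proof) the cross terms in this inner product reorganize into a square plus a remainder controlled by the radius hypothesis in \eqref{mainineqg}.

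Finally, for the RIP fluctuation I would pair $\big[\tfrac12\mathcal{B}^{*}\mathcal{B}(\mtx{WW}^\T - \Sym(\mtx{M})) - \Projoff(\WWTMinusZZT)\big]\mtx{W} = \Sym\!\big(\A^{*}\A(\UVTMinusM) - \UVTMinusM\big)\mtx{W}$ against $\WMinusZR$; because $\Sym(\cdot)$ is off-diagonal, this equals $\ip{\A(\UVTMinusM)}{\A(\mtx{H}_U\mtx{V}^\T + \mtx{U}\mtx{H}_V^\T)} - \ip{\UVTMinusM}{\mtx{H}_U\mtx{V}^\T + \mtx{U}\mtx{H}_V^\T}$. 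Both matrices in this pairing have rank at most $2r$, so Lemma~\ref{lemma:rip:innerproducts} applied at order $4r$ bounds its absolute value by $\delta_{4r}\fronorm{\UVTMinusM}\fronorm{\mtx{H}_U\mtx{V}^\T + \mtx{U}\mtx{H}_V^\T}$; since $\fronorm{\UVTMinusM} = \tfrac{1}{\sqrt2}\fronorm{\Projoff(\WWTMinusZZT)} \le \tfrac{1}{\sqrt2}\fronorm{\WWTMinusZZT}$ and, similarly, $\fronorm{\mtx{H}_U\mtx{V}^\T + \mtx{U}\mtx{H}_V^\T} \le \sqrt2\,\fronorm{(\WMinusZR)\mtx{W}^\T}$ — both coming from $\fronorm{\Sym(\cdot)}^2 = 2\fronorm{\cdot}^2$ and the fact that $\Projoff$ does not increase the Frobenius norm — standard accounting of these $\sqrt2$ factors turns this into the error term $-\tfrac{\delta_{4r}}{2}\fronorm{\WWTMinusZZT}\fronorm{(\WMinusZR)\mtx{W}^\T}$ appearing in \eqref{eq:rect:inequalityone}. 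Adding the three contributions (the $\tfrac14\nabla F$ term, the nonnegative $\widetilde{\mtx{Z}}\widetilde{\mtx{Z}}^\T$ bonus, and the fluctuation) yields the claim.
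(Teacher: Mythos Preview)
Your proposal has a genuine gap, and it is exactly at the point you flag as ``where I expect the real difficulty to lie.'' The block identity you write down is correct, but it leaves you with a leftover term
\[
\tfrac12\ip{\Projoff(\WWTMinusZZT)\mtx{W}}{\WMinusZR}=\tfrac12\ip{\UVTMinusM}{\mtx{H}_U\mtx{V}^\T+\mtx{U}\mtx{H}_V^\T}
\]
that you propose to drop by proving it nonnegative. This is false in the generality of the lemma (which is stated for \emph{arbitrary} $\mtx{W}$ and $\mtx{R}$): already for $r=n_1=n_2=1$ with $\mtx{X}=\mtx{Y}=1$, $\mtx{R}=1$, $\mtx{U}=2$, $\mtx{V}=0.6$, the term equals $(uv-1)\big(v(u-1)+u(v-1)\big)=0.2\cdot(-0.2)<0$. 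No Procrustes property or radius hypothesis is available here. Relatedly, your ``$\sqrt2$ accounting'' for the RIP piece is off: the two bounds you quote multiply to $\tfrac{1}{\sqrt2}\cdot\sqrt2=1$, so your fluctuation carries constant $\delta_{4r}$, not $\delta_{4r}/2$.

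Both problems disappear with a slightly different split, which is what the paper does. Instead of taking the full $\Projoff(\mtx{\Delta})$ as the population value of $\tfrac12\B^*\B(\mtx{\Delta})$, take only half of it:
\[
\nabla g(\mtx{W})=\tfrac12\big(\B^*\B(\mtx{\Delta})-\Projoff(\mtx{\Delta})\big)\mtx{W}\;+\;\Big[\tfrac12\Projoff(\mtx{\Delta})+\tfrac14(\Projdiag-\Projoff)(\mtx{WW}^\T)\Big]\mtx{W}.
\]
The second bracket collapses exactly to $\tfrac14(\mtx{WW}^\T-2\Sym(\mtx{M}))=\tfrac14(\WWTMinusZZT)+\tfrac14\widetilde{\mtx{Z}}\widetilde{\mtx{Z}}^\T$, with \emph{no} residual $\Projoff$ term; pairing with $\WMinusZR$ then gives $\tfrac14\ip{\nabla F(\mtx{W})}{\WMinusZR}$ plus the $\widetilde{\mtx{Z}}$ bonus precisely as you computed. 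The first bracket is $\tfrac12$ times $(\B^*\B-\Projoff)(\mtx{\Delta})$, and the paper bounds its pairing with $(\WMinusZR)\mtx{W}^\T$ via the lifted RIP inequality for $\B$ (Lemma~\ref{lemma:rip:rect:innerproducts}); the explicit $\tfrac12$ in front is exactly what yields the $\delta_{4r}/2$ in~\eqref{eq:rect:inequalityone}. In short, you peeled off twice as much $\Projoff(\mtx{\Delta})$ into the population as you should have; peel off only half and both the sign obstruction and the constant discrepancy vanish.
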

\begin{lemma}
\label{two514}
Let $\mathcal{A}$ be a linear map obeying rank-$6r$ RIP with constant
$\delta_{6r} \leq 1/10$.  Then for all $\mtx{W} \in \R^{(n_1+n_2) \times r}$
satisfying $\dist(\mtx{W}, \mtx{Z}) \leq \frac{1}{4} \opnorm{\mtx{Z}}$, we have
that
\begin{align}
\label{ineqlem2}
\frac{21}{400}\fronorm{\WWTMinusZZT}^2 + \frac{1}{8\norm{\mtx{M}} } \fronorm{\widetilde{\mtx{Z}}\widetilde{\mtx{Z}}^\T\mtx{W} }^2 \ge \frac{16}{1683}\frac{1}{\opnorm{\mtx{M}}}\fronorm{\nabla g(\mtx{W})}^2.
\end{align}
\end{lemma}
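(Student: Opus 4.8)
The plan is to transplant the proof of Lemma~\ref{lemma:concentration:gradientnorm} to the lifted space; the only genuine novelties are the bookkeeping forced by the block structure and the appearance of the balancing term.

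\emph{Step 1: rewrite the gradient.} Set $\mtx{N} := \mtx{WW}^\T - \Sym(\mtx{M})$ and $\mtx{\Delta} := \WWTMinusZZT$. Using $\Sym(\mtx{M}) = \Projoff(\mtx{ZZ}^\T)$ and $(\Projdiag - \Projoff)(\mtx{ZZ}^\T) = \widetilde{\mtx{Z}}\widetilde{\mtx{Z}}^\T$ together with linearity of $\Projdiag,\Projoff$, one checks
\begin{align*}
  \nabla g(\mtx{W}) = \tfrac{1}{2}\B^*\B(\mtx{N})\,\mtx{W} + \tfrac{1}{4}(\Projdiag - \Projoff)(\mtx{\Delta})\,\mtx{W} + \tfrac{1}{4}\widetilde{\mtx{Z}}\widetilde{\mtx{Z}}^\T\mtx{W} \:.
\end{align*}
I would also record the elementary identity $\B(\mtx{S}) = \A(\mtx{S}_{12} + \mtx{S}_{21}^\T)$: in particular $\B$ annihilates block-diagonal matrices (so $\B(\mtx{N}) = \B(\mtx{\Delta})$) and $\B(\Sym(\mtx{P})) = 2\A(\mtx{P})$, hence every RIP statement for $\A$ transfers to $\B$. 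Since the lifted matrices that arise have rank at most $3r$, and passing to the underlying $n_1 \times n_2$ block can at most double the rank, rank-$6r$ RIP of $\A$ suffices --- which is exactly the hypothesis.

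\emph{Step 2: the core estimate.} This is the analogue of Equation~\eqref{eq:concengradnorm:b}. Put $\gamma := c/\opnorm{\mtx{M}}$ for a constant $c$ to be chosen, playing the role of $\gamma = 1/(2\opnorm{\mtx{U}}^2)$; here $\sigma_\ell^2(\mtx{Z}) = 2\sigma_\ell(\mtx{M})$ and $\dist(\mtx{W},\mtx{Z}) \le \tfrac14\opnorm{\mtx{Z}}$ give $\tfrac{9}{8}\opnorm{\mtx{M}} \le \opnorm{\mtx{W}}^2 \le \tfrac{25}{8}\opnorm{\mtx{M}}$. The goal is a reverse inequality of the form
\begin{align*}
  a\,\fronorm{\B(\mtx{\Delta})}^2 + \tfrac{1}{8\opnorm{\mtx{M}}}\fronorm{\widetilde{\mtx{Z}}\widetilde{\mtx{Z}}^\T\mtx{W}}^2 - \gamma\,\fronorm{\nabla g(\mtx{W})}^2 \ge b\,\fronorm{\mtx{\Delta}}^2
\end{align*}
for suitable constants $a,b>0$, obtained by the same kind of manipulation as in Section~\ref{secconcengradnorm}: expand $\fronorm{\nabla g(\mtx{W})}^2 = \ip{\nabla g(\mtx{W})}{\nabla g(\mtx{W})}$, move one factor of $\mtx{W}$ into the trailing rank-$\le r$ matrix, replace each occurrence of $\B^*\B$ (equivalently $\A^*\A$) by the identity at the price of a $\delta_{6r}\fronorm{\cdot}\fronorm{\cdot}$ term via Lemma~\ref{lemma:rip:innerproducts}, and bound the residuals $\fronorm{\mtx{\Delta} - \gamma(\cdots)}$ through the variational form of the Frobenius norm. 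The two new ingredients are (i) the exact, RIP-free pieces $\tfrac14(\Projdiag - \Projoff)(\mtx{\Delta})\mtx{W}$ and $\tfrac14\widetilde{\mtx{Z}}\widetilde{\mtx{Z}}^\T\mtx{W}$, controlled respectively by $\fronorm{\mtx{\Delta}}\opnorm{\mtx{W}}$ (using $\fronorm{\mtx{A}\mtx{B}} \le \fronorm{\mtx{A}}\opnorm{\mtx{B}}$ and the Frobenius-norm invariance of $\Projdiag - \Projoff$) and by $\fronorm{\widetilde{\mtx{Z}}\widetilde{\mtx{Z}}^\T\mtx{W}}$; and (ii) the cross terms between these and the data part, which by $2uv \le u^2+v^2$ split into a multiple of $\fronorm{\mtx{\Delta}}^2$ (absorbed by $b\,\fronorm{\mtx{\Delta}}^2$, where there is slack) and a multiple of $\fronorm{\widetilde{\mtx{Z}}\widetilde{\mtx{Z}}^\T\mtx{W}}^2$ (absorbed by the very generous $\tfrac{1}{8\opnorm{\mtx{M}}}$ coefficient, which only needs to beat $\tfrac{16}{1683}$).

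\emph{Step 3: close.} Rank-$6r$ RIP gives $\fronorm{\B(\mtx{\Delta})}^2 = 4\fronorm{\A(\UVTMinusM)}^2 \le 4(1+\delta_{6r})\fronorm{\UVTMinusM}^2 \le 2(1+\delta_{6r})\fronorm{\mtx{\Delta}}^2$, the last step using $2\fronorm{\UVTMinusM}^2 \le \fronorm{\mtx{\Delta}}^2$. Substituting into the core estimate, using $\delta_{6r} \le 1/10$, and choosing $c$ so that $\gamma = \tfrac{16}{1683\opnorm{\mtx{M}}}$ (and the resulting coefficient of $\fronorm{\mtx{\Delta}}^2$ is at most $\tfrac{21}{400}$) yields \eqref{ineqlem2}. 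I expect the main obstacle to be precisely this constant-chasing: keeping every intermediate matrix within rank $3r$ in the lifted space (so the rank doubling lands at $6r$ and no larger), applying the symmetric-dilation normalization consistently through the whole chain of RIP substitutions, and tracking the many constants tightly enough that $\tfrac{21}{400}$ and $\tfrac{16}{1683}$ come out as stated --- each estimate is routine, but there are many of them and they interact. A lesser subtlety worth flagging is that, unlike in the PSD case, the ``population'' version of $\nabla g$ is $[\tfrac14\Projdiag + \tfrac34\Projoff](\mtx{\Delta})\mtx{W} + \tfrac14\widetilde{\mtx{Z}}\widetilde{\mtx{Z}}^\T\mtx{W}$ rather than $\nabla F(\mtx{W}) = \mtx{\Delta}\mtx{W}$, so one must keep the two apart when chaining the block identities.
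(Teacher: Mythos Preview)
Your plan is viable, but the paper takes a cleaner and structurally different route. Rather than inlining the PSD argument against the full three-term decomposition of $\nabla g(\mtx{W})$, the paper isolates the data piece and applies the PSD machinery to it alone: it states an analogue of Lemma~\ref{lemma:concentration:gradientnorm} for $\B$ (Lemma~\ref{lemma:rect:concentration:gradientnorm}), giving $\fronorm{\Projoff(\mtx{\Delta})}^2 \ge \tfrac{10}{17\opnorm{\mtx{W}}^2}\fronorm{\B^*\B(\mtx{\Delta})\mtx{W}}^2$, with proof omitted as ``nearly identical''. With that in hand the argument is just two applications of Young's inequality $(a-b)^2 \ge \tfrac{\varepsilon}{1+\varepsilon}a^2 - \varepsilon b^2$. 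The first, using the identity $\widetilde{\mtx{Z}}\widetilde{\mtx{Z}}^\T = (\Projdiag-\Projoff)(\mtx{ZZ}^\T)$, lower-bounds $\tfrac{1}{8\opnorm{\mtx{M}}}\fronorm{\widetilde{\mtx{Z}}\widetilde{\mtx{Z}}^\T\mtx{W}}^2$ by a multiple of $\fronorm{(\Projdiag-\Projoff)(\mtx{WW}^\T)\mtx{W}}^2$ minus $\varepsilon\tfrac{25}{64}\fronorm{\mtx{\Delta}}^2$. The second, using $\tfrac{1}{2}\B^*\B(\mtx{\Delta})\mtx{W} = \nabla g(\mtx{W}) - \tfrac{1}{4}(\Projdiag-\Projoff)(\mtx{WW}^\T)\mtx{W}$ (note: $\mtx{WW}^\T$, not $\mtx{\Delta}$), lower-bounds $\tfrac{1}{20}\fronorm{\mtx{\Delta}}^2$ by a multiple of $\fronorm{\nabla g(\mtx{W})}^2$ minus a multiple of the same $\fronorm{(\Projdiag-\Projoff)(\mtx{WW}^\T)\mtx{W}}^2$. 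Summing cancels that common term, and the choices $\varepsilon = 4/625$, $\gamma = 25/74$ give exactly $\tfrac{21}{400}$ and $\tfrac{16}{1683}$.

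The key organizational difference is which splitting of $\nabla g$ one uses. You split into $\tfrac12\B^*\B(\mtx{\Delta})\mtx{W}$, $\tfrac14(\Projdiag-\Projoff)(\mtx{\Delta})\mtx{W}$, and $\tfrac14\widetilde{\mtx{Z}}\widetilde{\mtx{Z}}^\T\mtx{W}$; the paper splits into $\tfrac12\B^*\B(\mtx{\Delta})\mtx{W}$ and $\tfrac14(\Projdiag-\Projoff)(\mtx{WW}^\T)\mtx{W}$, and separately writes $\widetilde{\mtx{Z}}\widetilde{\mtx{Z}}^\T\mtx{W}$ in terms of the same regularizer piece plus a $\mtx{\Delta}$-correction. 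That shared pivot $(\Projdiag-\Projoff)(\mtx{WW}^\T)\mtx{W}$ is what makes the paper's two-Young cancellation work and keeps the constant-chasing to a single line. Your direct expansion will produce more cross terms to track, and your Step~3 upper bound on $\fronorm{\B(\mtx{\Delta})}^2$ is a detour the paper avoids entirely --- but there is no genuine gap, and your approach has the virtue of being self-contained rather than deferring to an unproved $\B$-analogue lemma.
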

With these lemmas in place we have all the elements to prove \eqref{mainineqg}.
We use Lemma~\ref{one514}, Equation~\eqref{eq:rect:inequalityone} together with
the inequality $2ab \leq a^2 + b^2$ to conclude that,
\begin{align}
\label{inter1}
  \ip{\nabla g(\mtx{W})}{&\WMinusZR} \geq - \frac{\delta_{4r}}{4}\left( \fronorm{\WWTMinusZZT}^2 + \fronorm{(\WMinusZR)\mtx{W}^\T}^2 \right) \nonumber \\
                                    &\qquad+ \frac{1}{4} \ip{\nabla F(\mtx{W})}{\WMinusZR} + \frac{1}{8\norm{\mtx{M}} } \fronorm{\widetilde{\mtx{Z}}\widetilde{\mtx{Z}}^\T\mtx{W} }^2.
\end{align}
By assumption $\dist(\mtx{W}, \mtx{Z}) \leq \frac{1}{4} \sigma_r(\mtx{Z})$, so
we can apply Lemma~\ref{lemma:expected_f_rc_cond} to $\ip{\nabla
F(\mtx{W})}{\WMinusZR}$, which combined with \eqref{inter1} yields
\begin{align}
\label{inter2}
  \ip{\nabla g(\mtx{W})}{&\WMinusZR} \geq (\frac{1}{100}-\frac{\delta_{4r}}{4}) \fronorm{\WWTMinusZZT}^2+(\frac{1}{80}-\frac{\delta_{4r}}{4}) \fronorm{(\WMinusZR)\mtx{W}^\T}^2 \nonumber \\
                                    &\qquad+ \frac{\sigma_r(\mtx{M})}{8} \fronorm{\WMinusZR}^2 +\frac{21}{400}\fronorm{\WWTMinusZZT}^2+ \frac{1}{8\norm{\mtx{M}} } \fronorm{\widetilde{\mtx{Z}}\widetilde{\mtx{Z}}^\T\mtx{W} }^2.
\end{align}
Applying Lemma \ref{two514} together with $\delta_{4r} \leq 1/25$ to
\eqref{inter2} completes the proof of \eqref{mainineqg} and hence the theorem.
All that remains is to prove Lemma \ref{one514} and \ref{two514}, which we do
in Sections \ref{secone514} and \ref{sectwo514}, respectively.

\subsubsection{Relating the regularity condition of $g$ and $F$ (Lemma \ref{one514})}
\label{secone514}
We begin the proof of Lemma \ref{one514} with the following RIP inequality
about the map $\B$.  The proof of this lemma is almost identical to the proof of
Lemma~\ref{lemma:rip:innerproducts}, so we omit the details.
\begin{lemma}
\label{lemma:rip:rect:innerproducts}
Suppose $\A$ is $2r$-RIP with constant $\delta_{2r}$, and $\B$ is constructed
from $\A$ as described above.  For any rank-$r$ matrices $\mtx{X}, \mtx{Y} \in
\R^{(n_1+n_2) \times (n_1+n_2)}$, we have
\begin{align*}
    \abs{\ip{\B(\mtx{X})}{\B(\mtx{Y})} - \ip{\Projoff(\mtx{X})}{\Projoff(\mtx{Y})}} \leq \delta_{2r} \fronorm{\Projoff(\mtx{X})} \fronorm{\Projoff(\mtx{Y})} \:.
\end{align*}
\end{lemma}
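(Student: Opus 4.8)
The plan is to reduce this lifted statement to the ordinary RIP inner-product bound for $\A$ (Lemma~\ref{lemma:rip:innerproducts}), exactly as the sentence preceding the lemma suggests. The key structural observation is that $\B$ depends on a lifted matrix only through its off-diagonal blocks: since each $\mtx{B}_k = \Sym(\mtx{A}_k)$ is supported entirely on the off-diagonal blocks, $\ip{\mtx{B}_k}{\mtx{X}} = \ip{\mtx{B}_k}{\Projoff(\mtx{X})}$ for every $\mtx{X}$, and hence $\B(\mtx{X}) = \B(\Projoff(\mtx{X}))$. Consequently both sides of the claimed inequality already depend on $\mtx{X},\mtx{Y}$ only through $\Projoff(\mtx{X}),\Projoff(\mtx{Y})$, so I may assume from the outset that $\mtx{X} = \Projoff(\mtx{X})$ and $\mtx{Y} = \Projoff(\mtx{Y})$ are block-off-diagonal.

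Next I would introduce the flattening $\phi$ that sends a block-off-diagonal matrix $\blockmatoff{\mtx{C}}{\mtx{D}} \in \R^{(n_1+n_2)\times(n_1+n_2)}$ to $\mtx{C} + \mtx{D}^\T \in \R^{n_1\times n_2}$. Expanding the block product in $\ip{\Sym(\mtx{A}_k)}{\blockmatoff{\mtx{C}}{\mtx{D}}}$ shows that, up to a fixed positive scalar (the normalization built into $\Sym$, for which $\fronorm{\Sym(\mtx{A})}^2 = 2\fronorm{\mtx{A}}^2$), the map $\B$ acts on $\Projoff(\mtx{X})$ exactly as $\A$ acts on its flattening $\phi(\mtx{X})$. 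In every setting where this lemma is invoked the relevant off-diagonal blocks are in fact symmetric, i.e.\ $\Projoff(\mtx{X})$ has the form $\Sym(\mtx{C})$; for such matrices a one-line computation gives that $\ip{\cdot}{\cdot}$ and $\fronorm{\cdot}$ of the flattenings reproduce, up to the same fixed scalar, the corresponding quantities for $\Projoff(\mtx{X}),\Projoff(\mtx{Y})$. The one point worth verifying separately is a rank bound: if $\mtx{X}$ has rank at most $r$ then so does $\phi(\mtx{X})$, since $\mtx{C} = \mtx{X}_{12}$ is a submatrix of $\mtx{X}$ (and in the symmetric case $\phi(\mtx{X}) = 2\mtx{X}_{12}$).

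With these identifications in hand the proof closes in one step: $\ip{\B(\mtx{X})}{\B(\mtx{Y})}$ equals a fixed scalar times $\ip{\A(\phi(\mtx{X}))}{\A(\phi(\mtx{Y}))}$, the matrices $\phi(\mtx{X}),\phi(\mtx{Y})$ have rank at most $r$, and $\A$ is $2r$-RIP, so Lemma~\ref{lemma:rip:innerproducts} bounds $\abs{\ip{\A(\phi(\mtx{X}))}{\A(\phi(\mtx{Y}))} - \ip{\phi(\mtx{X})}{\phi(\mtx{Y})}}$ by $\delta_{2r}\fronorm{\phi(\mtx{X})}\fronorm{\phi(\mtx{Y})}$; translating back through the identities of the previous paragraph yields precisely $\abs{\ip{\B(\mtx{X})}{\B(\mtx{Y})} - \ip{\Projoff(\mtx{X})}{\Projoff(\mtx{Y})}} \leq \delta_{2r}\fronorm{\Projoff(\mtx{X})}\fronorm{\Projoff(\mtx{Y})}$. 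I do not expect any genuine obstacle here: the entire content is the block-matrix reduction to $\A$, and the only thing demanding care is bookkeeping the $\sqrt{2}$-type normalization factors that $\Sym$ introduces, so that the constant multiplying the Frobenius-norm product emerges as exactly $\delta_{2r}$ rather than a multiple of it. This is precisely why the paper is comfortable asserting that the proof is ``almost identical'' to that of Lemma~\ref{lemma:rip:innerproducts} and omitting the details.
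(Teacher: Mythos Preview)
Your reduction is the right mechanism and is presumably what the paper has in mind: since each $\mtx{B}_k = \Sym(\mtx{A}_k)$ is supported on the off-diagonal blocks, one has exactly (no scalar) $\B(\mtx{X}) = \A(\phi(\mtx{X}))$ with $\phi(\mtx{X}) := \mtx{X}_{12} + \mtx{X}_{21}^\T$, and Lemma~\ref{lemma:rip:innerproducts} then applies to the flattenings.

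However, two specific assertions in your outline are not correct. First, in the paper's only invocation of this lemma (Equation~\eqref{eq:rect:rip}) the second matrix is $(\WMinusZR)\mtx{W}^\T$, whose off-diagonal blocks $\mtx{H}_1\mtx{V}^\T$ and $\mtx{H}_2\mtx{U}^\T$ are \emph{not} transposes of one another, so the restriction to $\Projoff(\mtx{X}) = \Sym(\mtx{C})$ is not justified by the applications. Second, and more seriously, the bookkeeping you flag as ``the only thing demanding care'' does \emph{not} produce exactly $\delta_{2r}$, and in fact the lemma as written is false for general rank-$r$ matrices: take $n_1=n_2=r=1$ and $\mtx{X} = \mtx{Y} = \bigl(\begin{smallmatrix}1\\1\end{smallmatrix}\bigr)\bigl(\begin{smallmatrix}1 & -1\end{smallmatrix}\bigr)$, so that $\mtx{X}_{12}=-1$, $\mtx{X}_{21}=1$, $\phi(\mtx{X})=0$ and hence $\B(\mtx{X})=0$, while $\fronorm{\Projoff(\mtx{X})}^2=2$; the stated inequality would then force $\delta_{2r}\ge 1$. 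Even in the symmetric case $\Projoff(\mtx{X}) = \Sym(\mtx{C})$ you single out, one gets $\B(\mtx{X}) = 2\A(\mtx{C})$ and $\fronorm{\Projoff(\mtx{X})} = \sqrt{2}\fronorm{\mtx{C}}$, so applying Lemma~\ref{lemma:rip:innerproducts} yields an inequality centered at $2\ip{\Projoff(\mtx{X})}{\Projoff(\mtx{Y})}$ rather than $\ip{\Projoff(\mtx{X})}{\Projoff(\mtx{Y})}$. What your flattening argument actually proves cleanly is
\[
  \abs{\ip{\B(\mtx{X})}{\B(\mtx{Y})} - \ip{\phi(\mtx{X})}{\phi(\mtx{Y})}} \le \delta_{4r}\fronorm{\phi(\mtx{X})}\fronorm{\phi(\mtx{Y})},
\]
since $\phi(\mtx{X})$ has rank $\le 2r$; when one of the two matrices is symmetric (as $\mtx{\Delta}$ is in \eqref{eq:rect:rip}) one checks $\ip{\phi(\mtx{\Delta})}{\phi(\cdot)} = 2\ip{\Projoff(\mtx{\Delta})}{\Projoff(\cdot)}$ and $\fronorm{\phi(\mtx{\Delta})} = \sqrt{2}\fronorm{\Projoff(\mtx{\Delta})}$, and the extra factors are absorbed downstream because \eqref{eq:rect:rip} bounds by the full norms $\fronorm{\WWTMinusZZT}$ and $\fronorm{(\WMinusZR)\mtx{W}^\T}$ rather than their off-diagonal projections. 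So the approach is right but the statement you would end up proving is not the one displayed.
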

To relate the gradients $\nabla g(\mtx{W})$ and $\nabla F(\mtx{W})$ we first make a few manipulations to $\nabla g(\mtx{W})$. Define $\mtx{\Delta} :=  \mtx{WW}^\T - \Sym(\mtx{M})$. We have
\begin{align}
  \nabla g(\mtx{W}) &= \frac{1}{2} \B^*\B(\mtx{\Delta}) \mtx{W} + \frac{1}{4} (\Projdiag - \Projoff)(\mtx{WW}^\T) \mtx{W} \nonumber \\
                    &= \frac{1}{2} (\B^*\B(\mtx{\Delta}) - \Projoff(\mtx{\Delta})) \mtx{W} + \frac{1}{2} \Projoff(\mtx{\Delta}) \mtx{W} + \frac{1}{4} (\Projdiag - \Projoff)(\mtx{WW}^\T) \mtx{W} \nonumber \\
                    &= \frac{1}{2} (\B^*\B(\mtx{\Delta}) - \Projoff(\mtx{\Delta})) \mtx{W} + \frac{1}{2}\Projoff(\mtx{WW}^\T) \mtx{W} - \frac{1}{2}\Projoff(\Sym(\mtx{M})) \mtx{W} \nonumber \\
                    &\qquad\qquad+ \frac{1}{4}\Projdiag(\mtx{WW}^\T) \mtx{W} - \frac{1}{4} \Projoff(\mtx{WW}^\T) \mtx{W} \nonumber \\
                    &= \frac{1}{2} (\B^*\B(\mtx{\Delta}) - \Projoff(\mtx{\Delta}))\mtx{W} + \frac{1}{4}(\Projdiag + \Projoff)(\mtx{WW}^\T) \mtx{W} - \frac{1}{2}\Sym(\mtx{M}) \mtx{W} \nonumber\\
                    &= \frac{1}{2} (\B^*\B(\mtx{\Delta}) - \Projoff(\mtx{\Delta}))\mtx{W} + \frac{1}{4}( \mtx{WW}^\T - 2\Sym(\mtx{M}) ) \mtx{W}. \label{eq:rect:gradone}
\end{align}
Taking inner products of both sides of Equation~\eqref{eq:rect:gradone} gives us
\begin{align}
\label{myinter510}
  \ip{\nabla g(\mtx{W})}{\WMinusZR} = \frac{1}{2} \ip{(\B^*\B(\mtx{\Delta}) - \Projoff(\mtx{\Delta}))\mtx{W}}{\WMinusZR} + \frac{1}{4} \ip{(\mtx{WW}^\T - 2\Sym(\mtx{M}))\mtx{W}}{\WMinusZR} \:.
\end{align}
The first term is simple to control with RIP. Observe that
\begin{align}
  \ip{(\B^*\B(\mtx{\Delta}) &- \Projoff(\mtx{\Delta}))\mtx{W}}{\WMinusZR} \nonumber \\
                            &= \ip{\B^*\B(\mtx{\Delta}) - \Projoff(\mtx{\Delta})}{(\WMinusZR)\mtx{W}^\T} \nonumber \\
                            &= \ip{\B(\mtx{\Delta})}{\B((\WMinusZR)\mtx{W}^\T)} - \ip{\Projoff(\mtx{\Delta})}{\Projoff((\WMinusZR)\mtx{W}^\T)} \nonumber \\
                            &\stackrel{(a)}{\geq} -\delta_{4r} \fronorm{ \WWTMinusZZT } \fronorm{(\WMinusZR)\mtx{W}^\T} \:, \label{eq:rect:rip}
\end{align}
where (a) follows from Lemma~\ref{lemma:rip:rect:innerproducts}.

We now relate the second term to the gradient of $F$.  By exploiting
the structure of $\mtx{Z}$ and $\widetilde{\mtx{Z}}$, we have
\begin{align}
  \ip{\mtx{WW}^\T &- 2\Sym(\mtx{M}) }{\WMinusZR}  \nonumber \\
                  &\stackrel{(a)}{=} \ip{(\WWTMinusZZT)\mtx{W}}{\WMinusZR} + \ip{\widetilde{\mtx{Z}}  \widetilde{\mtx{Z}}^\T \mtx{W}}{\WMinusZR}  \nonumber \\
                  &\stackrel{(b)}{=} \ip{(\WWTMinusZZT)\mtx{W}}{\WMinusZR} + \Tr( \mtx{W}^\T \widetilde{\mtx{Z}}\widetilde{\mtx{Z}}^\T\mtx{W} )  \nonumber \\
                  &\stackrel{(c)}{\geq} \ip{(\WWTMinusZZT)\mtx{W}}{\WMinusZR} + \frac{1}{\norm{\widetilde{\mtx{Z}}}^2} \fronorm{\widetilde{\mtx{Z}}\widetilde{\mtx{Z}}^\T\mtx{W} }^2  \nonumber \\
                  &\stackrel{(d)}{=} \ip{\nabla F(\mtx{W})}{\WMinusZR} + \frac{1}{2\opnorm{\mtx{M}}} \fronorm{\widetilde{\mtx{Z}}\widetilde{\mtx{Z}}^\T\mtx{W} }^2 \:, \label{myinter510_b}
\end{align}
where (a) holds because $2\Sym(\mtx{M}) = \mtx{ZZ}^\T -
\widetilde{\mtx{Z}}\widetilde{\mtx{Z}}^\T$, (b) holds because
$\widetilde{\mtx{Z}}^\T \mtx{Z} = \mtx{0}_{r \times r}$, (c) holds because
$\fronorm{\widetilde{\mtx{Z}}\widetilde{\mtx{Z}}^\T\mtx{W} }^2 = \Tr(\mtx{W}^\T
\widetilde{\mtx{Z}}\widetilde{\mtx{Z}}^\T\widetilde{\mtx{Z}}\widetilde{\mtx{Z}}^\T
\mtx{W}) \leq \sigma_1(\widetilde{\mtx{Z}}^\T \widetilde{\mtx{Z}} ) \Tr(
\mtx{W}^\T \widetilde{\mtx{Z}}\widetilde{\mtx{Z}}^\T\mtx{W} )$, and (d) holds
since $\norm{\widetilde{\mtx{Z}}}^2 = 2\norm{\mtx{M}}$.
The proof of Lemma~\ref{one514} now follows from combining \eqref{myinter510}
with \eqref{eq:rect:rip} and \eqref{myinter510_b} .

\subsubsection{Lipschitz-gradient type condition for $g$ (Lemma \ref{two514})}
\label{sectwo514}
The left-hand side of \eqref{ineqlem2} has two terms. We start by bounding the
second term. Fix any $\varepsilon > 0$. Then,
\begin{align}
  \frac{1}{8\opnorm{\mtx{M}}} \fronorm{\widetilde{\mtx{Z}}\widetilde{\mtx{Z}}^\T\mtx{W} }^2 &= \frac{1}{8\opnorm{\mtx{M}}} \fronorm{  (\Projdiag-\Projoff)(\mtx{WW}^\T) \mtx{W} + (\Projdiag-\Projoff)(\mtx{ZZ}^\T - \mtx{WW}^\T)\mtx{W} }^2 \nonumber \\
                                                                                            &\stackrel{(a)}{\geq} \frac{1}{8\opnorm{\mtx{M}}} \left( \frac{\varepsilon}{1+\varepsilon} \fronorm{(\Projdiag-\Projoff)(\mtx{WW}^\T) \mtx{W} }^2 -  \varepsilon \fronorm{(\Projdiag-\Projoff)(\mtx{ZZ}^\T - \mtx{WW}^\T)\mtx{W}}^2\right) \nonumber \\
                                                                                            &\geq \frac{1}{8\opnorm{\mtx{M}}}\frac{\varepsilon}{1+\varepsilon}\fronorm{(\Projdiag-\Projoff)(\mtx{WW}^\T) \mtx{W} }^2 - \frac{\varepsilon}{8} \frac{\opnorm{\mtx{W}}^2}{\opnorm{\mtx{M}}} \fronorm{\WWTMinusZZT}^2 \nonumber \\
                                                                                            &\stackrel{(b)}{\geq} \frac{1}{8\opnorm{\mtx{M}}}\frac{\varepsilon}{1+\varepsilon}\fronorm{(\Projdiag-\Projoff)(\mtx{WW}^\T) \mtx{W} }^2 - \varepsilon \frac{25}{64} \fronorm{\WWTMinusZZT}^2 \:. \label{eq:rect:regtwo}
\end{align}
Here, (a) holds since by Young's inequality for any $\varepsilon>0$, we have $(a-b)^2 \geq
\frac{\varepsilon}{1+\varepsilon}a^2 - \varepsilon b^2$ and (b) holds since $2\opnorm{\mtx{M}}=\opnorm{\mtx{Z}}^2$ and $\opnorm{\mtx{W}} \leq \frac{5}{4} \opnorm{\mtx{Z}}$.

To bound the first term in left-hand side of \eqref{ineqlem2}, we state a lemma which shows that our augmented measurement map $\B$
obeys a similar Lipschitz property to that of $\A$ stated in Lemma~\ref{lemma:concentration:gradientnorm}. The proof of this lemma is nearly identical to that of
Lemma~\ref{lemma:concentration:gradientnorm}, and requires minor modifications
to deal with the projection operator $\Projoff$.  We omit the
details.
\begin{lemma}
\label{lemma:rect:concentration:gradientnorm}
Let $\A$ be as in the hypothesis of Lemma~\ref{lemma:concentration:gradientnorm}.
Then for all $\mtx{W}, \mtx{Z} \in \R^{(n_1+n_2)\times r}$, we have
\begin{align*}
  \fronorm{\Projoff(\WWTMinusZZT)}^2 \geq \frac{10}{17} \frac{1}{\opnorm{\mtx{W}}^2} \fronorm{ \B^*\B(\WWTMinusZZT) \mtx{W} }^2 \:.
\end{align*}
\end{lemma}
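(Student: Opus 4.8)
The plan is to imitate the proof of Lemma~\ref{lemma:concentration:gradientnorm} almost line for line, replacing $\A$ by $\B$, $\mtx{U}$ by $\mtx{W}$, $\mtx{X}$ by $\mtx{Z}$, and $\UUTMinusXXT$ by $\mtx{\Delta}:=\WWTMinusZZT$, with the single structural change that the ``isometric'' reference scale $\fronorm{\mtx{\Delta}}^2$ that appears throughout the PSD argument must everywhere be replaced by $\fronorm{\Projoff(\mtx{\Delta})}^2$, since Lemma~\ref{lemma:rip:rect:innerproducts} tells us only that $\B$ acts like an isometry on the block-off-diagonal part of its argument. Throughout, write $\gamma:=1/(2\opnorm{\mtx{W}}^2)$ and $\delta:=\delta_{6r}$.

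First I would reduce, exactly as in Section~\ref{secconcengradnorm}, to establishing
\[
  \norm{\B(\mtx{\Delta})}_F^2 - \gamma\,\norm{\B^*\B(\mtx{\Delta})\,\mtx{W}}_F^2 \;\ge\; \tfrac14\,\fronorm{\Projoff(\mtx{\Delta})}^2 .
\]
Indeed, taking both arguments equal to $\mtx{\Delta}$ in Lemma~\ref{lemma:rip:rect:innerproducts} (read in the form valid up to rank $3r$ under $6r$-RIP, exactly as Lemma~\ref{lemma:rip:innerproducts} is invoked for rank-$2r$ and rank-$3r$ matrices in the PSD proof) gives $\norm{\B(\mtx{\Delta})}_F^2 \le (1+\delta)\fronorm{\Projoff(\mtx{\Delta})}^2 \le \tfrac{11}{10}\fronorm{\Projoff(\mtx{\Delta})}^2$; substituting into the displayed inequality and rearranging yields $\fronorm{\Projoff(\mtx{\Delta})}^2 \ge \tfrac{20}{17}\gamma\,\norm{\B^*\B(\mtx{\Delta})\mtx{W}}_F^2 = \tfrac{10}{17}\tfrac{1}{\opnorm{\mtx{W}}^2}\norm{\B^*\B(\mtx{\Delta})\mtx{W}}_F^2$, which is the claim.

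To establish the displayed reduction inequality I would run the chain of Section~\ref{secconcengradnorm} verbatim. Writing $\mtx{G}:=\B^*\B(\mtx{\Delta})$, one notes that $\mtx{G}$ is symmetric and block-off-diagonal because each $\mtx{B}_k=\Sym(\mtx{A}_k)$ is, hence $\norm{\B^*\B(\mtx{\Delta})\mtx{W}}_F^2 = \ip{\B(\mtx{\Delta})}{\B(\mtx{G}\,\WWT)}$ and the left-hand side of the reduction inequality equals $\ip{\B(\mtx{\Delta})}{\B(\mtx{\Delta}-\gamma\,\mtx{G}\,\WWT)}$. One then applies Lemma~\ref{lemma:rip:rect:innerproducts} at the same places as in the PSD proof to pass from $\B$-inner products to $\Projoff$-inner products; each invocation is legal because the relevant matrices have rank at most $3r$: $\mtx{\Delta}$ has rank $\le 2r$, the products $\mtx{\Delta}\,\WWT$ and $\Projoff(\mtx{\Delta})\,\WWT$ have rank $\le r$, and $\mtx{\Delta}-\gamma\,\mtx{G}\,\WWT$ has rank $\le 3r$. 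The upper bound on $\fronorm{\Projoff(\mtx{\Delta}-\gamma\,\mtx{G}\,\WWT)}$ comes, as in the PSD case, from the variational form of the Frobenius norm (restricting the test matrix to the off-diagonal block) followed by one further application of Lemma~\ref{lemma:rip:rect:innerproducts}. Bounding the cross terms using $\opnorm{\WWT}=\opnorm{\mtx{W}}^2$, $\fronorm{\Projoff(\cdot)}\le\fronorm{\cdot}$, $\opnorm{\mtx{I}-\gamma\,\WWT}\le 1$ (valid for $\gamma=1/(2\opnorm{\mtx{W}}^2)$), and $\delta\le 1/10$, the reduction inequality collapses to the same scalar bound as in the PSD case, $1-\tfrac12-\tfrac1{20}-\tfrac1{200}-\tfrac1{10}=\tfrac{69}{200}\ge\tfrac14$.

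The main obstacle, and indeed the only content not already present in Section~\ref{secconcengradnorm}, is the bookkeeping needed to keep $\fronorm{\Projoff(\mtx{\Delta})}^2$ as the uniform reference scale through the whole chain. Concretely, the step that in the PSD proof reads $\ip{\mtx{\Delta}\,\UUT}{\mtx{\Delta}}\le\opnorm{\mtx{U}}^2\fronorm{\mtx{\Delta}}^2$ must become $\ip{\Projoff(\mtx{\Delta})\,\WWT}{\Projoff(\mtx{\Delta})} = \Tr(\WWT\,\Projoff(\mtx{\Delta})^2) \le \opnorm{\mtx{W}}^2\fronorm{\Projoff(\mtx{\Delta})}^2$, which uses that $\Projoff(\mtx{\Delta})$ is symmetric so $\Projoff(\mtx{\Delta})^2\succeq 0$; similarly each $\delta$-error term acquires a $\Projoff$ and must be checked to be bounded by a multiple of $\fronorm{\Projoff(\mtx{\Delta})}^2$, and one must confirm the rank counts above so that Lemma~\ref{lemma:rip:rect:innerproducts} applies at every step. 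Once these checks are carried out the numerical constants coincide with the PSD case, so nothing beyond careful accounting remains.
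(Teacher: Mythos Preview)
Your proposal is correct and is exactly the approach the paper intends: the paper itself states that the proof ``is nearly identical to that of Lemma~\ref{lemma:concentration:gradientnorm}, and requires minor modifications to deal with the projection operator $\Projoff$'' and omits the details. You have accurately identified both the overall structure (the same reduction to a $\tfrac14$-inequality, the same chain of RIP applications via Lemma~\ref{lemma:rip:rect:innerproducts}, the same choice $\gamma=1/(2\opnorm{\mtx{W}}^2)$, and the same final numerical computation $69/200\ge 1/4$) and the one genuine wrinkle, namely that $\fronorm{\Projoff(\mtx{\Delta})}^2$ replaces $\fronorm{\mtx{\Delta}}^2$ as the reference scale throughout, with the attendant checks that the cross term $\ip{\Projoff(\mtx{\Delta})\WWT}{\Projoff(\mtx{\Delta})}=\Tr(\WWT\,\Projoff(\mtx{\Delta})^2)\le\opnorm{\mtx{W}}^2\fronorm{\Projoff(\mtx{\Delta})}^2$ and the rank bounds for each invocation of Lemma~\ref{lemma:rip:rect:innerproducts} all go through.
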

With this lemma in place, note that for any $\gamma > 0$,
\begin{align}
  \frac{1}{20} \fronorm{\WWTMinusZZT}^2 &\stackrel{(a)}{\geq} \frac{1}{34 \opnorm{\mtx{W}}^2} \fronorm{ \B^*\B(\WWTMinusZZT) \mtx{W} }^2  \nonumber \\
                                        &\stackrel{(b)}{\geq} \frac{4}{425 \opnorm{\mtx{M}}} \fronorm{ \B^*\B(\WWTMinusZZT) \mtx{W} }^2  \nonumber \\
                                        &= \frac{16}{425 \opnorm{\mtx{M}}} \fronorm{ \frac{1}{2} \B^*\B(\mtx{\Delta}) \mtx{W}   }^2  \nonumber \\
                                        &= \frac{16}{425 \opnorm{\mtx{M}}} \fronorm{ \nabla g(\mtx{W}) - \frac{1}{4} (\Projdiag - \Projoff)(\mtx{WW}^\T) \mtx{W}   }^2  \nonumber \\
                                        &\stackrel{(c)}{\geq} \frac{16}{425 \opnorm{\mtx{M}}} \left( \frac{\gamma}{1+\gamma} \fronorm{\nabla g(\mtx{W})}^2 - \frac{\gamma}{16} \fronorm{ (\Projdiag - \Projoff)(\mtx{WW}^\T) \mtx{W} }^2 \right)  \nonumber \\
                                        &= \frac{16}{425 \opnorm{\mtx{M}}} \frac{\gamma}{1+\gamma} \fronorm{ \nabla g(\mtx{W}) }^2 - \frac{\gamma}{425 \opnorm{\mtx{M}}} \fronorm{ (\Projdiag - \Projoff)(\mtx{WW}^\T) \mtx{W} }^2 \:. \label{eq:rect:regthree}
\end{align}
Here, (a) follows from Lemma~\ref{lemma:rect:concentration:gradientnorm},
(b) follows because $\opnorm{\mtx{W}} \leq \frac{5}{4} \opnorm{\mtx{Z}}$,
and (c) is another application of Young's inequality.
Combining \eqref{eq:rect:regtwo} and
\eqref{eq:rect:regthree} with the hypothesis that $\delta_{4r} \leq 1/25$, and
setting $\varepsilon = 4/625$, $\gamma = 25/74$ completes the proof.

\subsubsection{Proofs for the initialization phase of Algorithm \ref{alg:rpf} (Theorem \ref{mainthm_general}, Equation \eqref{initthm_general})}
\label{secinitgen}
We start with the following generalization of Lemma~\ref{lemma:dist:upperbound}, the
proof of which is deferred to Appendix~\ref{proofgeneralperturb}.
\begin{lemma}
\label{lemma:generalperturb}
Let $\mtx{M}_1, \mtx{M}_2 \in \R^{n_1 \times n_2}$ be two rank $r$ matrices
with SVDs of the form $\mtx{M}_1 = \mtx{U}_1\mtx{\Sigma}_1\mtx{V}_1^\T$ and
$\mtx{M}_2 = \mtx{U}_2 \mtx{\Sigma}_2 \mtx{V}_2^\T$.  For $\ell=1,2$, define $\mtx{X}_\ell = \mtx{U}_\ell\mtx{\Sigma}_\ell^{1/2} \in \R^{n_1 \times r}$
and $\mtx{Y}_\ell = \mtx{V}_\ell\mtx{\Sigma}_\ell^{1/2} \in \R^{n_2 \times r}$. Furthermore, assume $\mtx{M}_1$ and $\mtx{M}_2$ obey $\opnorm{\mtx{M}_2 - \mtx{M}_1} \leq \frac{1}{2} \sigma_r(\mtx{M}_1)$.
Under these assumptions the following inequality holds
\begin{align*}
  \dist^2\left(\blockvec{\mtx{X}_2}{\mtx{Y}_2}, \blockvec{\mtx{X}_1}{\mtx{Y}_1}\right) \leq \frac{2}{\sqrt{2}-1} \frac{\fronorm{\mtx{M}_2 - \mtx{M}_1}^2}{\sigma_r(\mtx{M}_1)} \:.
\end{align*}
\end{lemma}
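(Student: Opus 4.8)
The plan is to lift to the symmetric positive semidefinite setting, reduce to Lemma~\ref{lemma:dist:upperbound}, and then absorb the resulting ``$\mtx{Z}\mtx{Z}^\T$'' discrepancy into $\fronorm{\mtx{M}_2-\mtx{M}_1}$ via a matrix square-root perturbation bound. For $\ell=1,2$ set $\mtx{Z}_\ell := \blockvec{\mtx{X}_\ell}{\mtx{Y}_\ell}\in\R^{(n_1+n_2)\times r}$, so that the left-hand side of the claim is precisely $\dist(\mtx{Z}_2,\mtx{Z}_1)$. Since $\mtx{X}_\ell^\T\mtx{X}_\ell = \mtx{Y}_\ell^\T\mtx{Y}_\ell = \mtx{\Sigma}_\ell$ we have $\mtx{Z}_\ell^\T\mtx{Z}_\ell = 2\mtx{\Sigma}_\ell$ and hence $\sigma_r^2(\mtx{Z}_1) = 2\sigma_r(\mtx{M}_1)$. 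Applying Lemma~\ref{lemma:dist:upperbound} with $\mtx{U}\leftarrow\mtx{Z}_2$ and $\mtx{X}\leftarrow\mtx{Z}_1$ (the lemma is stated for arbitrary matrices of matching size) then gives
\begin{align*}
  \dist^2\!\left(\blockvec{\mtx{X}_2}{\mtx{Y}_2},\blockvec{\mtx{X}_1}{\mtx{Y}_1}\right) \le \frac{1}{4(\sqrt{2}-1)\,\sigma_r(\mtx{M}_1)}\fronorm{\mtx{Z}_2\mtx{Z}_2^\T - \mtx{Z}_1\mtx{Z}_1^\T}^2 ,
\end{align*}
so it suffices to prove $\fronorm{\mtx{Z}_2\mtx{Z}_2^\T - \mtx{Z}_1\mtx{Z}_1^\T}^2 \le 8\fronorm{\mtx{M}_2-\mtx{M}_1}^2$.

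For this I would expand the lifted discrepancy blockwise. Because $\mtx{X}_\ell\mtx{Y}_\ell^\T = \mtx{M}_\ell$, the matrix $\mtx{Z}_\ell\mtx{Z}_\ell^\T$ has diagonal blocks $\mtx{X}_\ell\mtx{X}_\ell^\T,\ \mtx{Y}_\ell\mtx{Y}_\ell^\T$ and off-diagonal block $\mtx{M}_\ell$, whence
\begin{align*}
  \fronorm{\mtx{Z}_2\mtx{Z}_2^\T - \mtx{Z}_1\mtx{Z}_1^\T}^2 = \fronorm{\mtx{X}_2\mtx{X}_2^\T - \mtx{X}_1\mtx{X}_1^\T}^2 + 2\fronorm{\mtx{M}_2-\mtx{M}_1}^2 + \fronorm{\mtx{Y}_2\mtx{Y}_2^\T - \mtx{Y}_1\mtx{Y}_1^\T}^2 .
\end{align*}
It therefore remains to bound each of the two diagonal terms by $3\fronorm{\mtx{M}_2-\mtx{M}_1}^2$.

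The key observation is that $\mtx{X}_\ell\mtx{X}_\ell^\T = \mtx{U}_\ell\mtx{\Sigma}_\ell\mtx{U}_\ell^\T = (\mtx{M}_\ell\mtx{M}_\ell^\T)^{1/2}$ and, symmetrically, $\mtx{Y}_\ell\mtx{Y}_\ell^\T = (\mtx{M}_\ell^\T\mtx{M}_\ell)^{1/2}$, so each diagonal term is the squared Frobenius distance between the positive semidefinite square roots of two nearby rank-$r$ matrices. This is where the hypothesis $\opnorm{\mtx{M}_2-\mtx{M}_1}\le\tfrac12\sigma_r(\mtx{M}_1)$ is used: by Weyl's inequality it forces $\sigma_r(\mtx{M}_2)\ge\tfrac12\sigma_r(\mtx{M}_1)>0$, so $\mtx{M}_1\mtx{M}_1^\T$ and $\mtx{M}_2\mtx{M}_2^\T$ genuinely have rank $r$ with nonzero eigenvalues bounded away from zero, and the square root is then Lipschitz with a constant that depends on neither the dimension nor the condition number. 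Quantitatively one shows
\begin{align*}
  \fronorm{(\mtx{M}_2\mtx{M}_2^\T)^{1/2} - (\mtx{M}_1\mtx{M}_1^\T)^{1/2}}^2 \le 3\fronorm{\mtx{M}_2-\mtx{M}_1}^2
\end{align*}
(and the analogous bound with $\mtx{M}^\T\mtx{M}$ in place of $\mtx{M}\mtx{M}^\T$), either by invoking the Araki--Yamagami perturbation bound $\fronorm{|\mtx{A}|-|\mtx{B}|}\le\sqrt{2}\,\fronorm{\mtx{A}-\mtx{B}}$ with $\mtx{A}=\mtx{M}_\ell^\T$ (respectively $\mtx{M}_\ell$), or by a self-contained Lyapunov-type computation: with $\mtx{P}_\ell := (\mtx{M}_\ell\mtx{M}_\ell^\T)^{1/2}$ one has $\mtx{M}_2\mtx{M}_2^\T - \mtx{M}_1\mtx{M}_1^\T = \mtx{P}_2(\mtx{P}_2-\mtx{P}_1) + (\mtx{P}_2-\mtx{P}_1)\mtx{P}_1$, and pairing both sides with $\mtx{P}_2-\mtx{P}_1$ converts the left side to an inner product against $\mtx{M}_2-\mtx{M}_1$ while the right side is lower bounded using the eigenvalue bound above and the near-alignment of the column spaces of $\mtx{M}_1$ and $\mtx{M}_2$ that the hypothesis enforces. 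Combining with the block expansion gives $\fronorm{\mtx{Z}_2\mtx{Z}_2^\T - \mtx{Z}_1\mtx{Z}_1^\T}^2 \le 8\fronorm{\mtx{M}_2-\mtx{M}_1}^2$ and hence the claim.

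The main obstacle is exactly this last step. The naive route --- bound $\fronorm{\mtx{M}_2\mtx{M}_2^\T-\mtx{M}_1\mtx{M}_1^\T}\le(\opnorm{\mtx{M}_1}+\opnorm{\mtx{M}_2})\fronorm{\mtx{M}_2-\mtx{M}_1}$ and then ``divide by $\sigma_r$'' to undo the square --- loses a factor of order $\kappa^2$, which the statement (which has no $\sigma_1$ on the right) does not permit. One must instead compare the square roots to $\mtx{M}_2-\mtx{M}_1$ directly, and it is there that the smallness of $\opnorm{\mtx{M}_2-\mtx{M}_1}$ relative to $\sigma_r(\mtx{M}_1)$ does real work. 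Everything else --- the reduction to Lemma~\ref{lemma:dist:upperbound}, the blockwise expansion, and tracking the numerical constants --- is routine.
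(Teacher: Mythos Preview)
Your argument is correct (via Araki--Yamagami), but it is genuinely different from what the paper does. The paper avoids the diagonal blocks entirely by a $2r$-column dilation: it sets
\[
\mtx{A}=\begin{bmatrix}\mtx{X}_1&\mtx{X}_2\\ \mtx{Y}_1&-\mtx{Y}_2\end{bmatrix},\qquad
\mtx{B}=\begin{bmatrix}\mtx{X}_1&\mtx{X}_2\\ -\mtx{Y}_1&\mtx{Y}_2\end{bmatrix},
\]
observes that $\mtx{A}\mtx{A}^\T-\mtx{B}\mtx{B}^\T=2\,\Sym(\mtx{M}_1-\mtx{M}_2)$ (so the difference is purely off-diagonal and has Frobenius norm $2\sqrt{2}\,\fronorm{\mtx{M}_2-\mtx{M}_1}$), uses the hypothesis $\opnorm{\mtx{M}_2-\mtx{M}_1}\le\tfrac12\sigma_r(\mtx{M}_1)$ together with Weyl's inequality to lower bound $\sigma_{2r}(\mtx{B}\mtx{B}^\T)\ge\tfrac12\sigma_r(\mtx{M}_1)$, applies Lemma~\ref{lemma:dist:upperbound} to the pair $(\mtx{A},\mtx{B})$, and finally computes the optimal Procrustes rotation between $\mtx{A}$ and $\mtx{B}$ explicitly to show $\dist^2(\mtx{A},\mtx{B})=2\,\dist^2(\mtx{Z}_2,\mtx{Z}_1)$. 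So the paper's proof is self-contained (no external square-root perturbation result), and the hypothesis is genuinely used in the Weyl step. Your route is more direct---you apply Lemma~\ref{lemma:dist:upperbound} to $\mtx{Z}_1,\mtx{Z}_2$ themselves and absorb the diagonal blocks $\mtx{X}_\ell\mtx{X}_\ell^\T=|\mtx{M}_\ell^\T|$ via Araki--Yamagami---and in fact slightly stronger: since $\fronorm{|\mtx{A}|-|\mtx{B}|}\le\sqrt{2}\fronorm{\mtx{A}-\mtx{B}}$ holds unconditionally, your argument never actually uses the hypothesis $\opnorm{\mtx{M}_2-\mtx{M}_1}\le\tfrac12\sigma_r(\mtx{M}_1)$ (your remark that ``this is where the hypothesis does real work'' applies only to the vaguer Lyapunov sketch, which as written would need more care to make rigorous). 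Either way the constant comes out the same.
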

The rest of the proof proceeds similarly to the proof of
Equation~\eqref{initthm}.
Using Lemma~\ref{lemma:rip:innerproducts}, we conclude that
$\rho(\A)$ from Lemma~\ref{lemma:hardthresholding} is bounded by
$\rho(\A) \leq 2 \delta_{4r} \leq 2/25$. Setting
$\widetilde{\mtx{M}}_0 = \mtx{0}_{n_1 \times n_2}$ and applying Lemma~\ref{lemma:hardthresholding}
to our initialization iterates, we have that
\begin{align}
  \fronorm{ \widetilde{\mtx{M}}_\tau - \mtx{M} } \leq (2/25)^{\tau} \fronorm{\mtx{M}} \:. \label{eq:rect:initone}
\end{align}
In order for the RHS to be bounded above by $\frac{1}{2} \sigma_r(\mtx{M})$,
$\tau$ must satisfy
\begin{align}
  \tau \geq \log(25/2) \log\left(\frac{1}{2} \cdot \frac{\fronorm{\mtx{M}}}{\sigma_r(\mtx{M})}  \right) \:. \label{eq:rect:inittwo}
\end{align}
When this happens, Lemma~\ref{lemma:generalperturb} tells us that
\begin{align*}
  \dist^2(\mtx{W}_0, \mtx{Z}) \leq \frac{2}{\sqrt{2}-1} \frac{\fronorm{ \widetilde{\mtx{M}}_\tau - \mtx{M} }^2}{\sigma_r(\mtx{M})} \:.
\end{align*}
In order for this RHS to be bounded above by $\frac{1}{8} \sigma_r(\mtx{M})$,
we require that
$\fronorm{ \widetilde{\mtx{M}}_\tau - \mtx{M} }^2 \leq \frac{\sqrt{2}-1}{16} \sigma_r^2(\mtx{M})$.
Using Equation~\eqref{eq:rect:initone}, it is sufficient for $\tau$ to satisfy
\begin{align}
  \tau \geq \log(25/2) \log\left( 7 \cdot \frac{\fronorm{\mtx{M}}}{\sigma_r(\mtx{M})}  \right) \:. \label{eq:rect:initthree}
\end{align}
Since $\fronorm{\mtx{M}} \leq \sqrt{r} \opnorm{\mtx{M}}$, setting $\tinit$ as
$\tinit \geq 3 \log(\sqrt{r} \kappa) + 5$
satisfies both \eqref{eq:rect:inittwo} and \eqref{eq:rect:initthree}.

\section*{Acknowledgements}
BR is generously supported by ONR awards N00014-11-1-0723 and N00014-13-1-0129,
NSF awards CCF-1148243 and CCF-1217058, AFOSR award FA9550-13-1-0138, and a
Sloan Research Fellowship.  RB is generously supported by ONR award N00014-11-1-0723 and the NDSEG Fellowship.
This research is supported in part by NSF CISE Expeditions Award
CCF-1139158, LBNL Award 7076018, and DARPA XData Award FA8750-12-2-0331, and
gifts from Amazon Web Services, Google, SAP, The Thomas and Stacey Siebel
Foundation, Adatao, Adobe, Apple, Inc., Blue Goji, Bosch, C3Energy, Cisco,
Cray, Cloudera, EMC2, Ericsson, Facebook, Guavus, HP, Huawei, Informatica,
Intel, Microsoft, NetApp, Pivotal, Samsung, Schlumberger, Splunk, Virdata and
VMware.

\else
\subsection{Rectangular case}

\label{sec:proofs:rect}

We now turn our attention to the general case where the matrices are
rectangular.  Recall that in this case, we want to recover a fixed but unknown
rank-$r$ matrix $\mtx{M} \in \R^{n_1 \times n_2}$ from linear measurements.
Assume that $\mtx{M}$ has a singular value decomposition of the form $\mtx{M} =
\mtx{A} \mtx{\Sigma} \mtx{B}^\T$.  Define $\mtx{X} = \mtx{A} \mtx{\Sigma}^{1/2}
\in \R^{n_1 \times r}$ and $\mtx{Y} = \mtx{B} \mtx{\Sigma}^{1/2} \in \R^{n_2
\times r}$.  With this piece of notation the iterates $\mtx{U}_\tau \in \R^{n_1
\times r}, \mtx{V}_\tau \in \R^{n_2 \times r}$ in Algorithm~\ref{alg:rpf} can
be thought of as estimates of $\mtx{X}$ and $\mtx{Y}$. The proof of the
correctness of the initialization phase of Procrustes Flow (Theorem
\ref{mainthm_general}, Equation \eqref{initthm_general}) in the rectangular case
is similar to the PSD case (Theorem \ref{mainthm}, Equation
\eqref{initthm}) and is detailed in Section \ref{secinitgen}. In this section
we shall describe the main ideas of the proof. See Section~\ref{secgeneral} for the
full details.

To simplify exposition we aggregate the pairs of matrices $(\mtx{U},\mtx{V})$, $(\mtx{X},\mtx{Y})$, and $(\mtx{X},-\mtx{Y})$ into larger ``lifted" matrices as follows
\begin{align*}
\mtx{W} := \blockvec{\mtx{U}}{\mtx{V}},\quad\mtx{Z} := \blockvec{\mtx{X}}{\mtx{Y}},\quad\text{and}\quad \widetilde{\mtx{Z}} := \blockvec{\mtx{X}}{-\mtx{Y}}.
\end{align*}
To prove Theorem~\ref{mainthm_general}, Equation~\eqref{convthm_general}, we
will demonstrate that the function $g(\mtx{W}):=g(\mtx{U},\mtx{V})$ over the
variable $\mtx{W}$ has similar form to $f(\mtx{U})$ over the variable
$\mtx{U}$. 
As in the proof for the PSD case, the crux of Theorem~\ref{mainthm_general}
lies in establishing that the regularity condition
\begin{align}
\label{mainineqg_summary}
\ip{\nabla &g(\mtx{W})}{\mtx{W}-\mtx{Z}\mtx{R}}  \nonumber \\
&\geq \frac{\sigma_r(\mtx{M})}{8} \fronorm{\mtx{W}-\mtx{Z}\mtx{R}}^2 + \frac{16}{1683\opnorm{\mtx{M}}} \fronorm{\nabla g(\mtx{W})}^2 \:,
\end{align}
holds for all $\mtx{W}\in\R^{(n_1+n_2)\times r}$ obeying
$\text{dist}\left(\mtx{W}, \mtx{Z} \right)\le
\frac{1}{2\sqrt{2}}\sigma_r^{1/2}(\mtx{M})$.
Assuming that this condition holds, we have that $g(\mtx{W})$ obeys
$\RC(8/\sigma_r(\mtx{M}), \frac{1683}{16} \opnorm{\mtx{M}}, \frac{1}{2\sqrt{2}}
\sigma_r^{1/2}(\mtx{M}))$, and hence Theorem~\ref{mainthm_general},
Equation~\eqref{convthm_general} immediately follows by appealing to
Lemma~\ref{lemma:rc_implies_convergence}.

To prove \eqref{mainineqg_summary}, we make use of the similarity of the expressions
with the PSD case. We start, as before, by defining a reference function
$F(\mtx{W}) := \frac{1}{4} \fronorm{ \WWTMinusZZT }^2$ with gradient $\nabla
F(\mtx{W}) = (\WWTMinusZZT) \mtx{W}$.
We now state two lemmas relating $g$ and $F$, which together immediately imply
\eqref{mainineqg_summary}.
The first lemma relates the regularity condition of $g$ to that of $F$ by
utilizing RIP. The second lemma provides a Lipschitz type property for the
gradient of $g$.
\COND{}{The proofs can be found in 
Sections \ref{secone514} and \ref{sectwo514}.}
\begin{lemma}
\label{one514_summary} Assume the linear mapping $\mathcal{A}$ obeys $4r$-RIP with constant $\delta_{4r}$. Then $g$ obeys the following regularity condition for any $\mtx{W} \in \R^{(n_1+n_2)\times r}$ and $\mtx{R} \in \R^{r \times r}$,
\begin{align}
  \ip{\nabla &g(\mtx{W})}{\WMinusZR} \nonumber \\
  &\geq -\frac{\delta_{4r}}{2} \fronorm{ \WWTMinusZZT } \fronorm{(\WMinusZR)\mtx{W}^\T} \nonumber \\
  &\qquad+ \frac{1}{4} \ip{\nabla F(\mtx{W})}{\WMinusZR} + \frac{1}{8\norm{\mtx{M}} } \fronorm{\widetilde{\mtx{Z}}\widetilde{\mtx{Z}}^\T\mtx{W} }^2 \:.
  \label{eq:rect:inequalityone_summary}
\end{align}
\end{lemma}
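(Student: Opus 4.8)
The plan is to mirror the structure of the PSD proof, in which $f$ was compared to its reference $F$ via RIP (Lemma~\ref{lemma:concentration:gradient}), but now on the lifted matrices, with $\A$ replaced by the augmented map $\B$ and with the role of $\mtx{UU}^\T-\mtx{XX}^\T$ played by $\mtx{\Delta} := \mtx{WW}^\T - \Sym(\mtx{M})$. A preliminary step is to record the $\B$-analogue of Lemma~\ref{lemma:rip:innerproducts}: since $\ip{\Sym(\mtx{A}_k)}{\mtx{X}}$ depends only on the off-diagonal blocks of $\mtx{X}$, the same polarization argument behind Lemma~\ref{lemma:rip:innerproducts} gives $\abs{\ip{\B(\mtx{X})}{\B(\mtx{Y})} - \ip{\Projoff(\mtx{X})}{\Projoff(\mtx{Y})}} \le \delta_{2r}\fronorm{\Projoff(\mtx{X})}\fronorm{\Projoff(\mtx{Y})}$ for all block matrices $\mtx{X},\mtx{Y}$ of rank at most $r$.

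The crux is an algebraic rewriting of $\nabla g(\mtx{W})$ that isolates its RIP-sensitive part. Starting from $\nabla g(\mtx{W}) = \frac{1}{2}\B^*\B(\mtx{\Delta})\mtx{W} + \frac{1}{4}(\Projdiag-\Projoff)(\mtx{WW}^\T)\mtx{W}$, I would add and subtract $\frac{1}{2}\Projoff(\mtx{\Delta})\mtx{W}$, use $\Projoff(\mtx{\Delta}) = \Projoff(\mtx{WW}^\T) - \Sym(\mtx{M})$ (because $\Sym(\mtx{M})$ is purely off-diagonal and hence fixed by $\Projoff$), and recombine the diagonal and off-diagonal blocks of $\mtx{WW}^\T$. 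This yields the clean identity $\nabla g(\mtx{W}) = \frac{1}{2}\big(\B^*\B(\mtx{\Delta}) - \Projoff(\mtx{\Delta})\big)\mtx{W} + \frac{1}{4}\big(\mtx{WW}^\T - 2\Sym(\mtx{M})\big)\mtx{W}$, whose first summand vanishes in expectation under RIP and whose second summand is a ``noiseless'' term close to $\nabla F(\mtx{W})$.

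Taking the inner product of this identity with $\WMinusZR$ then splits the proof into two pieces. For the first piece, $\ip{(\B^*\B(\mtx{\Delta})-\Projoff(\mtx{\Delta}))\mtx{W}}{\WMinusZR} = \ip{\B(\mtx{\Delta})}{\B((\WMinusZR)\mtx{W}^\T)} - \ip{\Projoff(\mtx{\Delta})}{\Projoff((\WMinusZR)\mtx{W}^\T)}$, which is bounded below by $-\delta_{4r}\fronorm{\WWTMinusZZT}\fronorm{(\WMinusZR)\mtx{W}^\T}$ by the $\B$-RIP inequality above, using that $\Projoff(\mtx{\Delta}) = \Projoff(\mtx{WW}^\T - \mtx{ZZ}^\T)$ so $\fronorm{\Projoff(\mtx{\Delta})} \le \fronorm{\WWTMinusZZT}$, and that $\mtx{\Delta}$ and $(\WMinusZR)\mtx{W}^\T$ have rank at most $2r$. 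For the second piece, $2\Sym(\mtx{M}) = \mtx{ZZ}^\T - \widetilde{\mtx{Z}}\widetilde{\mtx{Z}}^\T$ gives $(\mtx{WW}^\T - 2\Sym(\mtx{M}))\mtx{W} = \nabla F(\mtx{W}) + \widetilde{\mtx{Z}}\widetilde{\mtx{Z}}^\T\mtx{W}$; since $\widetilde{\mtx{Z}}^\T\mtx{Z} = \mtx{0}_{r\times r}$ we get $\ip{\widetilde{\mtx{Z}}\widetilde{\mtx{Z}}^\T\mtx{W}}{\WMinusZR} = \Tr(\mtx{W}^\T\widetilde{\mtx{Z}}\widetilde{\mtx{Z}}^\T\mtx{W}) \ge \opnorm{\widetilde{\mtx{Z}}}^{-2}\fronorm{\widetilde{\mtx{Z}}\widetilde{\mtx{Z}}^\T\mtx{W}}^2$, and $\opnorm{\widetilde{\mtx{Z}}}^2 = 2\opnorm{\mtx{M}}$. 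Reassembling with the $\frac{1}{2}$ and $\frac{1}{4}$ prefactors yields exactly \eqref{eq:rect:inequalityone_summary}.

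The step I expect to be the main obstacle is the bookkeeping in the algebraic rewriting of $\nabla g(\mtx{W})$: one has to keep careful track of which terms live on the diagonal versus the off-diagonal blocks and verify that, after adding and subtracting $\frac{1}{2}\Projoff(\mtx{\Delta})\mtx{W}$, the remaining $\Projdiag$ and $\Projoff$ pieces of $\mtx{WW}^\T$ recombine with no residue into $\frac{1}{4}(\mtx{WW}^\T - 2\Sym(\mtx{M}))\mtx{W}$. Once that identity is established, the rest is a direct transcription of the PSD computation, so I do not anticipate further difficulty.
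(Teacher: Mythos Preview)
Your proposal is correct and follows essentially the same path as the paper's proof in Section~\ref{secone514}: the same algebraic identity $\nabla g(\mtx{W}) = \frac{1}{2}(\B^*\B(\mtx{\Delta}) - \Projoff(\mtx{\Delta}))\mtx{W} + \frac{1}{4}(\mtx{WW}^\T - 2\Sym(\mtx{M}))\mtx{W}$, the same RIP bound via the $\B$-analogue of Lemma~\ref{lemma:rip:innerproducts} (stated in the paper as Lemma~\ref{lemma:rip:rect:innerproducts}), and the same treatment of the noiseless piece using $2\Sym(\mtx{M}) = \mtx{ZZ}^\T - \widetilde{\mtx{Z}}\widetilde{\mtx{Z}}^\T$ and $\widetilde{\mtx{Z}}^\T\mtx{Z} = \mtx{0}$. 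One small correction: $\mtx{\Delta} = \mtx{WW}^\T - \Sym(\mtx{M})$ has rank at most $3r$ (not $2r$, since $\Sym(\mtx{M})$ has rank $2r$), but as $(\WMinusZR)\mtx{W}^\T$ has rank at most $r$ the total is still $\le 4r$ and the $\delta_{4r}$ hypothesis suffices exactly as you claim.
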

\begin{lemma}
\label{two514_summary}
Let $\mathcal{A}$ be a linear map obeying rank-$6r$ RIP with constant
$\delta_{6r} \leq 1/10$.  Then for all $\mtx{W} \in \R^{(n_1+n_2) \times r}$
satisfying $\dist(\mtx{W}, \mtx{Z}) \leq \frac{1}{4} \opnorm{\mtx{Z}}$, we have
that
\begin{align}
\label{ineqlem2_summary}
\frac{21}{400}\fronorm{\WWTMinusZZT}^2 &+ \frac{1}{8\norm{\mtx{M}} } \fronorm{\widetilde{\mtx{Z}}\widetilde{\mtx{Z}}^\T\mtx{W} }^2 \nonumber \\
&\ge \frac{16}{1683}\frac{1}{\opnorm{\mtx{M}}}\fronorm{\nabla g(\mtx{W})}^2.
\end{align}
\end{lemma}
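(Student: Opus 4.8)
The plan is to follow the template of the PSD case (Lemma~\ref{lemma:concentration:gradientnorm}), lifted to the block variable $\mtx{W}=\blockvec{\mtx{U}}{\mtx{V}}$, and to absorb the regularizer using the gradient identity $\nabla g(\mtx{W})=\tfrac{1}{2}\B^*\B(\mtx{\Delta})\mtx{W}+\tfrac{1}{4}(\Projdiag-\Projoff)(\WWT)\mtx{W}$, where $\mtx{\Delta}:=\WWT-\Sym(\mtx{M})$. The first thing I would establish is a lifted analog of Lemma~\ref{lemma:concentration:gradientnorm}: for $\delta_{6r}\le 1/10$ and all $\mtx{W},\mtx{Z}$,
\[
\fronorm{\Projoff(\WWTMinusZZT)}^2 \ \ge\ \tfrac{10}{17}\,\tfrac{1}{\opnorm{\mtx{W}}^2}\,\fronorm{\B^*\B(\WWTMinusZZT)\mtx{W}}^2 .
\]
This copies the PSD argument with $\B$ replacing $\A$ and the $\B$-version of Lemma~\ref{lemma:rip:innerproducts} (which compares $\ip{\B(\cdot)}{\B(\cdot)}$ against the \emph{off-diagonal} inner product) replacing Lemma~\ref{lemma:rip:innerproducts}; one only needs to keep the $\Projoff$ projections in the right places and check that the intermediate matrices — $\WWTMinusZZT$ of rank $\le 2r$, $\WWTMinusZZT-\gamma\,\B^*\B(\WWTMinusZZT)\WWT$ of rank $\le 3r$, and $(\WWTMinusZZT)\WWT$ of rank $\le r$ — are all covered by $6r$-RIP.

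Two algebraic facts drive the rest. Since $\widetilde{\mtx{Z}}=\blockvec{\mtx{X}}{-\mtx{Y}}$ we have $\widetilde{\mtx{Z}}\widetilde{\mtx{Z}}^\T=(\Projdiag-\Projoff)(\ZZT)$; and because $\mtx{B}_k=\Sym(\mtx{A}_k)$ is purely block-off-diagonal, $\B$ (hence $\B^*\B$) annihilates block-diagonal inputs, so since $\Sym(\mtx{M})-\ZZT=-\Projdiag(\ZZT)$ is block-diagonal we get $\B^*\B(\WWTMinusZZT)\mtx{W}=\B^*\B(\mtx{\Delta})\mtx{W}=2\bigl(\nabla g(\mtx{W})-\tfrac14(\Projdiag-\Projoff)(\WWT)\mtx{W}\bigr)$. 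Throughout I use $\opnorm{\mtx{W}}^2\le\tfrac{25}{16}\opnorm{\mtx{Z}}^2=\tfrac{25}{8}\opnorm{\mtx{M}}$, which follows from $\dist(\mtx{W},\mtx{Z})\le\tfrac14\opnorm{\mtx{Z}}$ together with $\opnorm{\mtx{Z}}^2=2\opnorm{\mtx{M}}$.

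Now I bound the two terms of the left-hand side separately and add. For the regularizer-type term, write $\widetilde{\mtx{Z}}\widetilde{\mtx{Z}}^\T\mtx{W}=(\Projdiag-\Projoff)(\WWT)\mtx{W}-(\Projdiag-\Projoff)(\WWTMinusZZT)\mtx{W}$ and apply Young's inequality $\fronorm{a-b}^2\ge\tfrac{\varepsilon}{1+\varepsilon}\fronorm{a}^2-\varepsilon\fronorm{b}^2$, bounding $\fronorm{b}^2\le\opnorm{\mtx{W}}^2\fronorm{\WWTMinusZZT}^2$ (the map $\Projdiag-\Projoff$ is a Frobenius isometry) and inserting $\opnorm{\mtx{W}}^2\le\tfrac{25}{8}\opnorm{\mtx{M}}$. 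For the first term, start from $\tfrac1{20}\fronorm{\WWTMinusZZT}^2\ge\tfrac1{20}\fronorm{\Projoff(\WWTMinusZZT)}^2$, apply the lifted lemma, insert $\opnorm{\mtx{W}}^2\le\tfrac{25}{8}\opnorm{\mtx{M}}$, rewrite $\B^*\B(\WWTMinusZZT)\mtx{W}$ via the identity above, and apply Young again with $a=\nabla g(\mtx{W})$ and $b=\tfrac14(\Projdiag-\Projoff)(\WWT)\mtx{W}$. Adding the two estimates, the $\fronorm{(\Projdiag-\Projoff)(\WWT)\mtx{W}}^2$ contributions appear with opposite signs; choosing $\varepsilon=4/625$ makes the residual $\fronorm{\WWTMinusZZT}^2$ coefficient $\tfrac1{400}$, which with the $\tfrac1{20}$ already spent gives the target $\tfrac{21}{400}$, and choosing $\gamma=25/74$ makes the surviving $\fronorm{\nabla g(\mtx{W})}^2$ coefficient exactly $\tfrac{16}{1683}$; a short arithmetic check shows these two choices make the off-diagonal cross-term coefficient nonnegative (in fact zero), finishing the proof.

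The main obstacle is the lifted gradient-norm lemma of the first paragraph: the RIP comparison there is against the off-diagonal inner product rather than the ambient one, so the $\Projoff$'s have to be threaded through the entire inner-product chain in exactly the right positions, and the rank bookkeeping must be redone to confirm that $6r$-RIP suffices. Everything downstream — the two Young's-inequality splits and the verification of the exact constants $\tfrac{21}{400}$, $\tfrac{16}{1683}$, $\varepsilon=4/625$, $\gamma=25/74$ — is routine.
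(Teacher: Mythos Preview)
Your proposal is correct and follows essentially the same route as the paper: you first establish the lifted analog of Lemma~\ref{lemma:concentration:gradientnorm} for $\B$ and $\Projoff$, then bound the two left-hand-side terms separately via the same two Young's-inequality splits, and close with the identical choices $\varepsilon=4/625$, $\gamma=25/74$. Your extra remark that these two parameters make the $\fronorm{(\Projdiag-\Projoff)(\WWT)\mtx{W}}^2$ coefficients cancel exactly is precisely what the paper leaves implicit when it writes ``completes the proof.''
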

With these lemmas in place we have all the elements to prove
\eqref{mainineqg_summary}.  By applying Lemma~\ref{lemma:expected_f_rc_cond} to
$\ip{\nabla F(\mtx{W})}{\WMinusZR}$ and combining
\eqref{eq:rect:inequalityone_summary} and \eqref{ineqlem2_summary},
Equation~\eqref{mainineqg_summary} follows after some simple manipulations.
This concludes the proof of Theorem~\ref{mainthm_general}.

\fi

\if\MODE2
%appendix then natbib
\bibliography{p}
\bibliographystyle{icml2016}

%\appendix
%\input{appendix}
\clearpage
\appendix
\onecolumn

\icmltitle{Supplementary Material}

\section{Proof of Lemma~\ref{lemma:dist:upperbound}}
\label{proofstdlem}
Define $\mtx{H}=\mtx{U}-\mtx{X}\mtx{R}$. Similar to the discussion at the beginning of Section~\ref{sec:proofs:first:regularity}, without loss of generality we can assume that (a) $\mtx{R}=\mtx{I}$, (b) $\mtx{U}^\T \mtx{X} \succeq 0$, and (c) $\mtx{H}^\T \mtx{X} = \mtx{X}^\T \mtx{H}$. With these simplifications, establishing the lemma is equivalent to showing that
\begin{align}
    \Tr( (\mtx{H}^\T\mtx{H})^2 + 4 \mtx{H}^\T\mtx{H}\mtx{H}^\T\mtx{X} + 2 (\mtx{H}^\T\mtx{X})^2 + 2 \mtx{X}^\T\mtx{X}\mtx{H}^\T\mtx{H} - \eta \mtx{H}^\T\mtx{H} ) \geq 0 \label{eq:lifted_equiv_cond}
\end{align}
holds with $\eta=\frac{1}{2(\sqrt{2}-1) \sigma_r^2(\mtx{X})}$. We note that
\begin{align*}
    \Tr( (\mtx{H}^\T\mtx{H} &+ \sqrt{2}\mtx{H}^\T\mtx{X})^2 + (4 - 2\sqrt{2}) \mtx{H}^\T\mtx{H}\mtx{H}^\T\mtx{X} + 2  \mtx{X}^\T\mtx{X}\mtx{H}^\T\mtx{H} - \eta \mtx{H}^\T\mtx{H}) \\
                     &= \Tr( (\mtx{H}^\T\mtx{H})^2 + 4 \mtx{H}^\T\mtx{H}\mtx{H}^\T\mtx{X} + 2 (\mtx{H}^\T\mtx{X})^2 + 2  \mtx{X}^\T\mtx{X}\mtx{H}^\T\mtx{H} - \eta \mtx{H}^\T\mtx{H} ) \:.
\end{align*}
Hence, a sufficient condition for \eqref{eq:lifted_equiv_cond} to hold is
\begin{align}
    (4-2\sqrt{2})\mtx{H}^\T\mtx{X} + 2\mtx{X}^\T\mtx{X} - \eta \mtx{I}_r \succeq 0 \:. \label{eq:lifed_equiv_cond_two}
\end{align}
Recalling that $\mtx{H}^\T\mtx{X} = \mtx{U}^\T\mtx{X} -  \mtx{X}^\T\mtx{X}$, and that $\mtx{U}^\T\mtx{X} \succeq 0$, we have
\begin{align*}
    (4-2\sqrt{2})\mtx{H}^\T\mtx{X} + 2  \mtx{X}^\T\mtx{X} - \eta \mtx{I}_r &= (4-2\sqrt{2})\mtx{U}^\T\mtx{X} + (2 - (4-2\sqrt{2}))  \mtx{X}^\T\mtx{X} - \eta \mtx{I}_r \\
                                                  &= (4-2\sqrt{2})\mtx{U}^\T\mtx{X} + 2(\sqrt{2}-1)  \mtx{X}^\T\mtx{X} - \eta \mtx{I}_r \:.
\end{align*}
Since $\mtx{U}^\T\mtx{X} \succeq 0$, to show \eqref{eq:lifed_equiv_cond_two} it suffices to show
\begin{align*}
    2(\sqrt{2}-1)  \mtx{X}^\T\mtx{X} - \eta \mtx{I}_r \succeq 0 \Longleftrightarrow \mtx{X}^\T\mtx{X} \succeq \frac{\eta}{2(\sqrt{2}-1)} \mtx{I}_r \:.
\end{align*}
The RHS trivially holds, concluding the proof.

\section{Proof of Lemma~\ref{lemma:stopinit}}
\label{proofstopinit}
From RIP and the assumption that $\delta_{2r}\leq 1/10$, we have
\begin{align*}
  \opnorm{ \widetilde{\mtx{M}}_\tau - \XXT } \leq \fronorm{ \widetilde{\mtx{M}}_\tau - \XXT } \leq \sqrt{\frac{10}{9}} e_\tau \:.
\end{align*}
By Weyl's inequalities, this means that
\begin{align}
\label{eq:stone}
    \sigma_r^2(\mtx{X}) \geq \sigma_r(\widetilde{\mtx{M}}_\tau) -  \sqrt{\frac{10}{9}}e_\tau \:.
\end{align}
Lemma~\ref{lemma:dist:upperbound} ensures that
\begin{align*}
  \dist(\mtx{U}_0, \mtx{X}) \leq \sqrt{\frac{3}{2}} \frac{1}{\sigma_r(\mtx{X})} \fronorm{ \widetilde{\mtx{M}}_\tau - \XXT } \:.
\end{align*}
We can upper bound the RHS by the following chain of inequalities,
\begin{align*}
  \sqrt{\frac{3}{2}} \frac{1}{\sigma_r(\mtx{X})} \fronorm{ \widetilde{\mtx{M}}_\tau - \XXT } &\stackrel{(a)}{\leq} \sqrt{\frac{3}{2}} \frac{1}{\sigma_r(\mtx{X})} \sqrt{\frac{10}{9}} e_\tau \\
                                                                                 &\stackrel{(b)}{\leq} \sqrt{\frac{3}{2}} \frac{1}{\sigma_r(\mtx{X})} \frac{1}{2\sqrt{6}} \left( \sigma_r(\widetilde{\mtx{M}}) - \sqrt{\frac{10}{9}} e_\tau \right) \\
                                                                                 &\stackrel{(c)}{\leq} \sqrt{\frac{3}{2}} \frac{1}{\sigma_r(\mtx{X})} \frac{1}{2\sqrt{6}} \sigma_r^2(\mtx{X})
                                                                                 = \frac{1}{4} \sigma_r(\mtx{X})
\end{align*}
where (a) follows from RIP,
(b) follows since $e_\tau \leq \frac{3}{20} \sigma_r(\widetilde{\mtx{M}})$ implies that
\begin{align*}
  \sqrt{\frac{10}{9}} e_\tau \leq \frac{1}{2\sqrt{6}} \left( \sigma_r(\widetilde{\mtx{M}}) -  \sqrt{\frac{10}{9}} e_\tau \right) \:,
\end{align*}
and (c) follows by \eqref{eq:stone}.

\section{Proof of Lemma~\ref{lemma:generalperturb}}
\label{proofgeneralperturb}

To begin with note that by the dilation trick we have for $\ell = 1,2$,
\begin{align*}
	\blockmatoff{\mtx{M}_\ell}{\mtx{M}_\ell^\T} = \frac{1}{2} \blockmat{\mtx{U}_\ell}{\mtx{U}_\ell}{\mtx{V}_\ell}{-\mtx{V}_\ell} \blockmatdiag{\mtx{\Sigma}_\ell}{-\mtx{\Sigma}_\ell}\blockmat{\mtx{U}_\ell}{\mtx{U}_\ell}{\mtx{V}_\ell}{-\mtx{V}_\ell}^\T \:.
\end{align*}
By simple algebraic manipulations we have
\begin{align}
\label{simple513}
	\blockmatoff{\mtx{M}_1}{\mtx{M}_1^\T} - \blockmatoff{\mtx{M}_2}{\mtx{M}_2^\T} = \frac{1}{2} \blockmat{\mtx{X}_1}{\mtx{X}_2}{\mtx{Y}_1}{-\mtx{Y}_2}\blockmat{\mtx{X}_1}{\mtx{X}_2}{\mtx{Y}_1}{-\mtx{Y}_2}^\T - \frac{1}{2} \blockmat{\mtx{X}_1}{\mtx{X}_2}{-\mtx{Y}_1}{\mtx{Y}_2}\blockmat{\mtx{X}_1}{\mtx{X}_2}{-\mtx{Y}_1}{\mtx{Y}_2}^\T \:.
\end{align}
Furthermore,
\begin{align}
\label{myeq513}
	\blockmat{\mtx{X}_1}{\mtx{X}_2}{-\mtx{Y}_1}{\mtx{Y}_2}\blockmat{\mtx{X}_1}{\mtx{X}_2}{-\mtx{Y}_1}{\mtx{Y}_2}^\T &= \blockmat{ \mtx{X}_1\mtx{X}_1^\T + \mtx{X}_2\mtx{X}_2^\T }{ -\mtx{X}_1\mtx{Y}_1^\T + \mtx{X}_2\mtx{Y}_2^\T  }{ -\mtx{Y}_1\mtx{X}_1^\T + \mtx{Y}_2\mtx{X}_2^\T  }{ \mtx{Y}_1\mtx{Y}_1^\T + \mtx{Y}_2\mtx{Y}_2^\T } \nonumber\\
															&= \blockmat{ \mtx{X}_1\mtx{X}_1^\T + \mtx{X}_2\mtx{X}_2^\T  }{ \mtx{0} }{\mtx{0}}{ \mtx{Y}_1\mtx{Y}_1^\T + \mtx{Y}_2\mtx{Y}_2^\T  } + \blockmatoff{\mtx{M}_2 - \mtx{M}_1}{\mtx{M}_2^\T - \mtx{M}_1^\T} \:.
\end{align}
Applying Weyl's inequality to \eqref{myeq513}, we have
\begin{align}
\label{sigmamin513}
	\sigma_{2r}\left(\blockmat{\mtx{X}_1}{\mtx{X}_2}{-\mtx{Y}_1}{\mtx{Y}_2}\blockmat{\mtx{X}_1}{\mtx{X}_2}{-\mtx{Y}_1}{\mtx{Y}_2}^\T \right) &\geq \sigma_{2r}\left(\blockmat{ \mtx{X}_1\mtx{X}_1^\T + \mtx{X}_2\mtx{X}_2^\T  }{ \mtx{0} }{\mtx{0}}{ \mtx{Y}_1\mtx{Y}_1^\T + \mtx{Y}_2\mtx{Y}_2^\T  }  \right) - \opnorm{\blockmatoff{\mtx{M}_2 - \mtx{M}_1}{\mtx{M}_2^\T - \mtx{M}_1^\T} } \nonumber\\
																		 &= \sigma_{2r}\left(\blockmat{ \mtx{X}_1\mtx{X}_1^\T + \mtx{X}_2\mtx{X}_2^\T  }{ \mtx{0} }{\mtx{0}}{ \mtx{Y}_1\mtx{Y}_1^\T + \mtx{Y}_2\mtx{Y}_2^\T} \right) - \opnorm{\mtx{M}_2 - \mtx{M}_1} \nonumber\\
												       &\geq \sigma_{2r}\left(\blockmat{ \mtx{X}_1\mtx{X}_1^\T  }{ \mtx{0} }{\mtx{0}}{ \mtx{Y}_1\mtx{Y}_1^\T} \right) - \opnorm{\mtx{M}_2 - \mtx{M}_1} \nonumber\\
												&\geq \frac{1}{2} \sigma_r(\mtx{M}_1) \:.
\end{align}
Applying Lemma~\ref{lemma:dist:upperbound} to the matrices
$\blockmat{\mtx{X}_1}{\mtx{X}_2}{\mtx{Y}_1}{-\mtx{Y}_2}$ and
$\blockmat{\mtx{X}_1}{\mtx{X}_2}{-\mtx{Y}_1}{\mtx{Y}_2}$ and utilizing equations \eqref{simple513} and \eqref{sigmamin513} we conclude that
\begin{align}
\label{inter513}
	\dist^2\left( \blockmat{\mtx{X}_1}{\mtx{X}_2}{\mtx{Y}_1}{-\mtx{Y}_2}, \blockmat{\mtx{X}_1}{\mtx{X}_2}{-\mtx{Y}_1}{\mtx{Y}_2}  \right) \leq \frac{4}{\sqrt{2}-1} \frac{\fronorm{\mtx{M}_2 - \mtx{M}_1}^2}{\sigma_r(\mtx{M}_1)}.
\end{align}
Let $\mtx{A}\mtx{S}\mtx{B}^\T$ be the singular value decomposition of $\mtx{X}_1^\T \mtx{X}_2 + \mtx{Y}_1^\T \mtx{Y}_2$.
It is easy to verify that the solution $\mtx{R}$ to the orthogonal Procrustes problem (equivalently the optimal rotation) between
$\blockmat{\mtx{X}_1}{\mtx{X}_2}{\mtx{Y}_1}{-\mtx{Y}_2}$ and
$\blockmat{\mtx{X}_1}{\mtx{X}_2}{-\mtx{Y}_1}{\mtx{Y}_2}$ is equal to
$\mtx{R} = \blockmatoff{\mtx{AB}^\T}{\mtx{BA}^\T}$. From this we conclude that
\begin{align*}
	\blockmat{\mtx{X}_1}{\mtx{X}_2}{\mtx{Y}_1}{-\mtx{Y}_2} - \blockmat{\mtx{X}_1}{\mtx{X}_2}{-\mtx{Y}_1}{\mtx{Y}_2} \mtx{R} = \blockmat{ \mtx{X}_1 - \mtx{X}_2 \mtx{BA}^\T}{ \mtx{X}_2 - \mtx{X}_1\mtx{AB}^\T }{ \mtx{Y}_1 - \mtx{Y}_2\mtx{BA}^\T }{ -\mtx{Y}_2 + \mtx{Y}_1\mtx{AB}^\T }.
\end{align*}
Therefore,
\begin{align*}
	\dist^2\left( \blockmat{\mtx{X}_1}{\mtx{X}_2}{\mtx{Y}_1}{-\mtx{Y}_2}, \blockmat{\mtx{X}_1}{\mtx{X}_2}{-\mtx{Y}_1}{\mtx{Y}_2}  \right)&=
	\fronorm{\blockmat{\mtx{X}_1}{\mtx{X}_2}{\mtx{Y}_1}{-\mtx{Y}_2} - \blockmat{\mtx{X}_1}{\mtx{X}_2}{-\mtx{Y}_1}{\mtx{Y}_2} \mtx{R} }^2\\
	 &= 2\fronorm{\mtx{X}_2 - \mtx{X}_1\mtx{AB}^\T}^2 + 2\fronorm{\mtx{Y}_2 - \mtx{Y}_1\mtx{AB}^\T}^2 \\
																	     &= 2 \cdot \dist^2\left( \blockvec{\mtx{X}_2}{\mtx{Y}_2}, \blockvec{\mtx{X}_1}{\mtx{Y}_1}  \right) \:.
\end{align*}
Plugging the latter in \eqref{inter513} concludes the proof.

\section{Proof of the first inequality in Equation~\eqref{myineq}}
\label{sec:appendix:simplecalc}

The first inequality is immediate from the following lemma.
\begin{lemma}
Let $\mtx{U}, \mtx{X} \in \R^{n_1 \times r}$ and $\mtx{V}, \mtx{Y} \in \R^{n_2 \times r}$.
Suppose that
\begin{align*}
    \dist\left( \blockvec{\mtx{U}}{\mtx{V}}, \blockvec{\mtx{X}}{\mtx{Y}}  \right) \leq \frac{1}{4} \opnorm{\blockvec{\mtx{X}}{\mtx{Y}}} \:.
\end{align*}
Then, we have that
\begin{align*}
    \fronorm{ \UVT - \XYT } \leq \frac{9}{4 \sqrt{2}} \opnorm{\blockvec{\mtx{X}}{\mtx{Y}}} \dist\left(  \blockvec{\mtx{U}}{\mtx{V}}, \blockvec{\mtx{X}}{\mtx{Y}}  \right) \:.
\end{align*}
\end{lemma}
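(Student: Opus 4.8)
The plan is to reduce the estimate to the symmetric bound of Lemma~\ref{simplem} by lifting. Set $\mtx{W} := \blockvec{\mtx{U}}{\mtx{V}}$ and $\mtx{Z} := \blockvec{\mtx{X}}{\mtx{Y}}$, both in $\R^{(n_1+n_2)\times r}$, so that $\dist(\mtx{W},\mtx{Z})$ is exactly the Procrustes distance appearing in the statement and the hypothesis becomes $\dist(\mtx{W},\mtx{Z}) \le \frac{1}{4}\opnorm{\mtx{Z}}$.

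The first step is to record the block form of the lifted residual. A direct multiplication gives
\begin{align*}
  \WWTMinusZZT = \blockmat{\UUT-\XXT}{\UVT-\XYT}{\VUT-\YXT}{\VVT-\YYT}.
\end{align*}
Applying the off-diagonal projection $\Projoff$ and noting that the top-right and bottom-left blocks of a symmetric matrix contribute equally to its squared Frobenius norm, we obtain
\begin{align*}
  2\fronorm{\UVT-\XYT}^2 = \fronorm{\Projoff(\WWTMinusZZT)}^2 \le \fronorm{\WWTMinusZZT}^2,
\end{align*}
the inequality simply discarding the nonnegative Frobenius mass of the two diagonal blocks.

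The second step is to apply Lemma~\ref{simplem} to the pair $(\mtx{W},\mtx{Z})$, now regarded as matrices in $\R^{(n_1+n_2)\times r}$; since $\dist(\mtx{W},\mtx{Z}) \le \frac{1}{4}\opnorm{\mtx{Z}}$ by assumption, the lemma gives $\fronorm{\WWTMinusZZT} \le \frac{9}{4}\opnorm{\mtx{Z}}\dist(\mtx{W},\mtx{Z})$. Chaining this with the previous display yields $\sqrt{2}\,\fronorm{\UVT-\XYT} \le \frac{9}{4}\opnorm{\mtx{Z}}\dist(\mtx{W},\mtx{Z})$, and dividing by $\sqrt{2}$ and substituting back the definitions of $\mtx{W}$ and $\mtx{Z}$ is precisely the claimed bound.

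I do not expect any genuine obstacle here; the argument is a clean reduction, and the only point that needs attention is the factor $\sqrt{2}$ arising because a symmetric matrix counts its off-diagonal block twice in the Frobenius norm, which is exactly what produces the constant $\frac{9}{4\sqrt{2}}$ rather than $\frac{9}{4}$. One could instead argue directly, writing $\UVT-\XYT = \mtx{U}(\mtx{V}-\mtx{Y}\mtx{R})^\T + (\mtx{U}-\mtx{X}\mtx{R})(\mtx{Y}\mtx{R})^\T$ for the optimal rotation $\mtx{R}$ and using $\opnorm{\mtx{U}}\le\frac{5}{4}\opnorm{\mtx{Z}}$ and $\opnorm{\mtx{Y}}\le\opnorm{\mtx{Z}}$ together with the Cauchy--Schwarz inequality, but this route gives a slightly worse constant, so the lifting argument is preferable.
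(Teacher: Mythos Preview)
Your proof is correct and follows essentially the same route as the paper's: lift to $\mtx{W},\mtx{Z}$, use the block decomposition to get $\fronorm{\UVT-\XYT}\le\frac{1}{\sqrt{2}}\fronorm{\WWTMinusZZT}$, and then apply Lemma~\ref{simplem}. The only difference is that you spell out the off-diagonal Frobenius computation explicitly, whereas the paper states the $\frac{1}{\sqrt{2}}$ bound in one line.
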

\begin{proof}
Put $\mtx{W} := \blockvec{\mtx{U}}{\mtx{V}}$ and $\mtx{Z} := \blockvec{\mtx{X}}{\mtx{Y}}$.
We have that $\fronorm{ \UVT - \XYT } \leq \frac{1}{\sqrt{2}} \fronorm{\WWTMinusZZT}$.
Applying Lemma~\ref{simplem} to $\fronorm{\WWTMinusZZT}$, we conclude that
$\frac{1}{\sqrt{2}} \fronorm{\WWTMinusZZT} \leq \frac{9}{4 \sqrt{2}} \opnorm{\mtx{Z}} \dist(\mtx{W}, \mtx{Z})$.
The result now follows.
\end{proof}

\else
%Small bib then appendix
{\small
\bibliography{p}
\bibliographystyle{alpha}
}
\appendix

\fi

\end{document}